\newcommand{\Z}{\mathbb{Z}}
\theoremstyle{definition}
\newtheorem{Def}{Definition}[section]
\newtheorem{Rem}[Def]{Remark}
\theoremstyle{theorem}
\newtheorem{Th}[Def]{Theorem}
\newtheorem{Prop}[Def]{Proposition}
\newtheorem{Lem}[Def]{Lemma}
\begin{document}
\title{
The GIT moduli of semistable pairs consisting of a cubic curve and a line on ${\mathbb P}^{2}$
}
\author{Masamichi Kuroda}
\date{}

\maketitle
\begin{abstract}
We discuss the GIT moduli of semistable pairs consisting of a cubic curve and a line on the projective plane. 
We study in some detail this moduli and compare it with another moduli 
suggested by Alexeev. 
It is the moduli of pairs (with no specified semi-abelian action) consisting of a cubic curve with at worst nodal singularities and a line which does not pass through singular points of the cubic curve. 
Meanwhile, we make a comparison between Nakamura's compactification of the moduli of level three elliptic curves and these two moduli spaces. 
\end{abstract}

\section{Introduction}
\footnote[0]{{\hspace{-0.6cm} 2010 \it Mathematics Subject Classification:} 14H10, 14K10. \\
{\it key words and phrases:} Moduli, Stability, Cubic curves}
Let ${\mathbb P}^2$ be the projective plane over an algebraically closed field $k$ of  characteristic not equal to 2 and 3. 
Let $M$ be the set of pairs consisting of a cubic curve and a line on ${\mathbb P}^2$. 
The action of ${\rm PGL} (3)$ on ${\mathbb P}^2$ induces an action of 
${\rm PGL} (3)$ on $M$. 
Let $M^{ss}$ be the set of semistable points of $M$ under this action. 
Then there exists a good categorical quotient of $M^{ss}$ by ${\rm PGL} (3)$, 
which we denote by $\overline{P}_{1,3}$. 
On the other hand, there exists another complete moduli $BP_{1,3}$ suggested by 
\cite{Alexeev02}. 
It is the moduli of pairs $(C, L)$ (with no specified semi-abelian action) such that $C$ is a reduced plane cubic curve with at worst nodal singularities and $L$ is a line which does not pass through singularities of $C$. 
We will construct this moduli by using the 
theory of \cite{KeelMori97}. 
There is also the moduli $SQ_{1,3}$ ($\simeq {\mathbb P}^1$) of Hesse cubic curves defined in \cite{Nakamura99}, which is well-known classically as the modular curve $X (3)$ of level three. 

The purpose of this paper is to study in some detail
the GIT moduli $\overline{P}_{1,3}$ 
following the method of \cite{MFK94}. 
By using the numerical criterion due to Hilbert and Mumford, we can classify unstable, semistable and stable pairs completely 
(see Proposition \ref{list of unstable pairs} and Table \ref{sspair}). 
Moreover by constructing suitable semistable limits, we give nontrivial identifications of semistable pairs in $\overline{P}_{1,3}$ (see Proposition \ref{nontrivial identifications}). 
It is clear that $\overline{P}_{1,3}$ and $BP_{1,3}$ have a common open subset $U_{1,3}$ consisting of pairs $(C, L)$ such that $C$ is a smooth cubic curve and $L$ is a line 
intersecting transversally. 
This enables us to compare 
$\overline{P}_{1,3}$ with $BP_{1,3}$ as follows (see Theorem \ref{main1}): \\
~\\
{\bf Theorem.} {\it 
There exists a birational map 
$f : \overline{P}_{1,3} \rightarrow BP_{1,3}$ such that 
\begin{itemize}
\item the base locus of $f$ is isomorphic to ${\mathbb P}^1$, which is the set of all semistable pairs $(C,L)$ 
consisting of a cuspidal curve $C$ and a line $L$ intersecting transversally at smooth points of $C$, and 
\item the base locus of the birational inverse $f^{-1}$ of $f$ is isomorphic to ${\mathbb P}^1$,  which is the set of all pairs $(C,L)$ 
consisting of a smooth cubic or irreducible nodal cubic curve $C$ and a triple tangent $L$ at a smooth point of $C$. 
\end{itemize}
}

Since $SQ_{1,3}$ is the moduli of Hesse cubics, 
we can define rational maps forgetting the level structure: 
\begin{align*}
\varphi :  SQ_{1,3} \times {\mathbb P}^2 \longrightarrow BP_{1, 3} , \ \ 
\psi : SQ_{1,3} \times {\mathbb P}^2 \longrightarrow \overline{P}_{1, 3} , 
\end{align*}
where ${\mathbb P}^2$ means the space of lines on ${\mathbb P}^2$. 
They are same branched coverings of degree $216$ on the common open subset $U_{1,3}$. 
This enables us to compare $\overline{P}_{1,3}$ and $BP_{1,3}$ with 
$SQ_{1,3} \times {\mathbb P}^2$ as we see Proposition \ref{main2}. 

This paper is organized as follows. 
In Section~2, 
we study the GIT moduli $\overline{P}_{1, 3}$. 
Especially we classify unstable, semistable and stable pairs, respectively, and we give nontrivial identifications of semistable pairs in $\overline{P}_{1, 3}$. 
In Section~3, 
we define the moduli $BP_{1,3}$ by using the Keel-Mori theorem. 
In Section~4, 
we construct a birational map $f : \overline{P}_{1,3} \to BP_{1,3}$ and discuss its properties. 
In Section~5, 
we make a comparison 
$\overline{P}_{1,3}$ and $BP_{1,3}$ with 
$SQ_{1,3} \times {\mathbb P}^2$. 

\section{The GIT moduli $\overline{P}_{1, 3}$} \label{GIT moduli}

We first give the definition of the moduli $\overline{P}_{1,3}$. 
We use mainly definitions and properties in \cite{MFK94}, \cite{Newstead78} and \cite{Dolgachev03}. 
Let $k$ be an algebraically closed field of characteristic not equal to 2 and 3. 
Let $V$ be the dual space of the space of homogeneous polynomials of degree one on the projective plane ${\mathbb P}^{2} = {\rm Proj} (k [x_0, x_1, x_2])$, that is, $V^\vee = k [x_0, x_1 , x_2]_1$. 
Each $F \in (S^3 V)^{\vee} = S^3 V^\vee = k [x_0, x_1, x_2]_3$ (resp. $V^\vee$) corresponds to the cubic curve (resp. line) $V(F)$, where $V(F)$ is the zero set of $F$ in ${\mathbb P}^2$. 
Then we can regard ${\mathbb P} (S^3 V) = {\rm Proj} ({\rm Sym} (S^3 V))$ (resp. ${\mathbb P} (V) = {\rm Proj} ({\rm Sym} (V))$) as the space of cubic curves (resp. lines). 
Hence the set of pairs consisting of a cubic curve and a line on ${\mathbb P}^2$ 
is isomorphic to ${\mathbb P} (S^3 V) \times {\mathbb P} (V)$. 
${\rm SL} (3)$ acts on $S^3 V^\vee$ (resp. $V^\vee$) by 
$g \cdot F (x) = F (g^{-1} \cdot x)$ for all 
$F \in S^3 V^\vee$ (resp. $V^\vee$). 
We have the natural morphism $q : {\rm SL} (3) \rightarrow {\rm PGL}(3) $ which is surjective with a finite kernel. 
For any $g \in {\rm PGL}(3)$, let $\tilde{g} \in {\rm SL} (3)$ be a matrix such that $q (\tilde{g}) = g$, 
and we put 
$g \cdot V(F) := V(\tilde{g} \cdot F)$ for all $F \in S^3 V^\vee$ (resp. $V^\vee$). 
This definition is independent of the choice of $\tilde{g}$. 
Thus we get an action of ${\rm PGL} (3)$ on ${\mathbb P} (S^3 V) \times {\mathbb P} (V)$: 
\begin{eqnarray*} 
{\rm PGL} (3) \times \left( {\mathbb P} (S^3 V) \times {\mathbb P} (V) \right) &\longrightarrow  & {\mathbb P} (S^3 V) \times {\mathbb P} (V) , \\ 
(g, \left( V(F), V(S) \right)) &\longmapsto  & \left( g \cdot V(F), g \cdot V(S) \right) .
\end{eqnarray*} 
Let $\left( {\mathbb P}(S^3 V) \times {\mathbb P}(V) \right)^{ss}$ be the set of 
semistable pairs of ${\mathbb P}(S^3 V) \times {\mathbb P}(V)$ with respect to the action of ${\rm PGL} (3)$. By Theorem 1.10 in \cite{MFK94}, 
there exists a good categorical quotient 
\begin{align*}
\phi : \left( {\mathbb P}(S^3 V) \times {\mathbb P}(V) \right)^{ss} \longrightarrow \overline{P}_{1, 3} := \left( {\mathbb P}(S^3 V) \times {\mathbb P}(V) \right)^{ss} /\!/ {\rm PGL} (3) .
\end{align*}

Next to classify unstable, semistable and stable pairs of ${\mathbb P} (S^3 V) \times {\mathbb P} (V)$ under the action of ${\rm PGL} (3)$, we give a quick review on the numerical criterion due to Hilbert and Mumford. 

\begin{Def}
An {\em one parameter subgroup} of ${\rm PGL} (3)$ is a nontrivial homomorphism of algebraic groups $\lambda : {\mathbb G}_m \rightarrow {\rm PGL} (3)$, and it is {\em normalized} if 
\begin{align*} 
\lambda(t)={\rm Diag} (t^{r_0},t^{r_1},t^{r_2})
\end{align*}
for some $r_i \in \Z$ with $r_0 \geq r_1 \geq r_2$ and $ r_0 + r_1 + r_2 = 0$. 
In what follows, in this paper, 
we denote by $1$-PS the one parameter subgroup of ${\rm PGL} (3)$. 
\end{Def}
To analyze stability we use normalized $1$-PS's. 
Let 
\begin{equation} \label{FS}
F (x) = \sum_{0 \leq i, j, i+j \leq 3} a_{i j} x_0^{3-i-j} x_1^i x_2^j, 
 \ \ S (x) =  b_0 x_0+b_1 x_1+b_2 x_2 . 
\end{equation}
Then the image of $z = (V(F), V(S)) \in {\mathbb P} (S^3 V) \times {\mathbb P} (V)$ under the Segre embedding 
${\mathbb P} (S^3 V) \times {\mathbb P} (V) \hookrightarrow {\mathbb P} (S^3 V \otimes V)$ is $V(H)$, where 
$$
H(x, y) := F(x) S(y) = \sum a_{i j} b_k x_0^{3 - i - j} x_1 ^i x_2^j y_k .
$$
The action of ${\rm PGL} (3)$ on ${\mathbb P} (S^3 V) \times {\mathbb P} (V)$ is extended to an action on ${\mathbb P} (S^3 V \otimes V)$. 
For any normalized $1$-PS $\lambda $, we have 
$$
\lambda (t) \cdot V(H) = V(H (\lambda (t)^{-1} \cdot x, \lambda (t)^{-1} \cdot y)) = V \left( \sum a_{i j} b_k t^{-R_{ijk}}x_0^{3 - i - j} x_1 ^i x_2^j y_k \right)
$$
where $R_{i j k } = (3-i-j)r_0+ir_1+jr_2+r_k$. 
Then we define 
$$
\mu(z,\lambda) := \max\{ R_{i j k} \mid a_{ij}b_k\neq 0\}. 
$$
By Theorem 2.1 in \cite{MFK94}, we have the following criterion: 

\begin{Th} \label{criterion}
For any $z \in {\mathbb P} (S^3 V) \times {\mathbb P} (V)$, 
$z$ is semistable (resp. stable) if and only if $\mu (g \cdot z, \lambda ) \geq 0$ (resp. $> 0$) for any normalized $1$-PS $\lambda $ and any $g \in {\rm PGL} (3)$. 
Note that $z$ is called unstable if $z$ is not semistable. 
\end{Th}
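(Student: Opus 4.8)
The plan is to obtain Theorem~\ref{criterion} as a direct translation of the Hilbert--Mumford numerical criterion (Theorem~2.1 of \cite{MFK94}) through three reductions: pass from $\mathrm{PGL}(3)$ to $\mathrm{SL}(3)$, replace arbitrary one parameter subgroups by normalized $1$-PS's using conjugation, and compute the weight $\mu$ explicitly via the Segre embedding.

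For the first reduction, recall that $q:\mathrm{SL}(3)\to\mathrm{PGL}(3)$ is surjective with finite central kernel, and this kernel acts trivially on $\mathbb{P}(S^3V)\times\mathbb{P}(V)$ and on its Segre image in $\mathbb{P}(S^3V\otimes V)$. Hence the $\mathrm{SL}(3)$- and $\mathrm{PGL}(3)$-orbits, orbit closures, and categorical quotients all coincide, so a point is semistable (resp.\ stable) for one action if and only if it is for the other, and it suffices to test the $1$-PS's of $\mathrm{SL}(3)$. One should note that the linearization in use is $\mathcal{O}(1,1)$, the pullback of $\mathcal{O}(1)$ under Segre; although $\mathcal{O}(1,1)$ itself only carries an $\mathrm{SL}(3)$-linearization, its cube $\mathcal{O}(3,3)$ is $\mathrm{PGL}(3)$-linearized and cuts out the same semistable locus, which is what makes ``semistable with respect to $\mathrm{PGL}(3)$'' legitimate.

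Next, by Theorem~2.1 of \cite{MFK94}, $z$ is semistable (resp.\ stable) for $\mathrm{SL}(3)$ if and only if $\mu(z,\lambda')\geq 0$ (resp.\ $>0$) for every $1$-PS $\lambda'$ of $\mathrm{SL}(3)$, where $\mu(z,\lambda')$ is minus the least weight occurring in the $\lambda'$-action on a lift of $z$ to the affine cone over the relevant projective space. Every $1$-PS of $\mathrm{SL}(3)$ has image in a maximal torus, hence is of the form $\lambda'=h\lambda h^{-1}$ for some $h\in\mathrm{SL}(3)$ and some diagonal $\lambda(t)=\mathrm{Diag}(t^{r_0},t^{r_1},t^{r_2})$; after reordering the coordinates we may assume $r_0\geq r_1\geq r_2$, while $r_0+r_1+r_2=0$ is automatic, so $\lambda$ is normalized. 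Using the standard identity $\mu(g\cdot z,\lambda)=\mu(z,g^{-1}\lambda g)$ and letting $g$ (the image of $h^{-1}$) range over all of $\mathrm{PGL}(3)$, the semistability condition becomes precisely ``$\mu(g\cdot z,\lambda)\geq 0$ for every normalized $1$-PS $\lambda$ and every $g\in\mathrm{PGL}(3)$'', and similarly for stability; moreover $\mu(g\cdot z,\lambda)>0$ for all such $(g,\lambda)$ already prevents any $1$-PS from fixing $z$, in line with Mumford's finiteness condition on stabilizers of stable points.

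Finally, for the weight computation: the Segre image of $z=(V(F),V(S))$ is $V(H)$ with $H(x,y)=F(x)S(y)=\sum a_{ij}b_k\,x_0^{3-i-j}x_1^ix_2^jy_k$, an element of the affine cone over $\mathbb{P}(S^3V\otimes V)$ whose monomials form a $\lambda$-weight basis. By the displayed computation $\lambda(t)$ scales the coefficient indexed by $(i,j,k)$ by $t^{-R_{ijk}}$ with $R_{ijk}=(3-i-j)r_0+ir_1+jr_2+r_k$, so the weights occurring in the lift $H$ of $z$ are exactly $\{-R_{ijk}\mid a_{ij}b_k\neq 0\}$ and $\mu(z,\lambda)=-\min\{-R_{ijk}\mid a_{ij}b_k\neq 0\}=\max\{R_{ijk}\mid a_{ij}b_k\neq 0\}$, which is the definition of $\mu(z,\lambda)$ used in the statement. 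Since the Segre embedding is $\mathrm{SL}(3)$-equivariant and a closed immersion compatible with the linearizations, the (semi)stability of $z$ agrees with that of its image, and the three reductions together yield the theorem. I do not anticipate any serious obstacle, as the statement is essentially a restatement of \cite{MFK94}, Theorem~2.1; the only genuine care needed is bookkeeping --- matching the sign convention for $\mu$ in \cite{MFK94} with the quantity $\max R_{ijk}$ above, and confirming that moving between $\mathrm{PGL}(3)$ and $\mathrm{SL}(3)$ does not disturb the (semi)stable locus, which is handled by passing to the $\mathrm{PGL}(3)$-linearized power $\mathcal{O}(3,3)$.
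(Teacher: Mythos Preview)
Your proposal is correct and aligns with the paper's treatment: the paper does not give a proof of this theorem at all, but simply states it as a direct consequence of Theorem~2.1 in \cite{MFK94}. Your write-up is exactly the standard unpacking of that citation---passing from $\mathrm{PGL}(3)$ to $\mathrm{SL}(3)$, conjugating an arbitrary $1$-PS into normalized diagonal form, and reading off $\mu$ from the Segre weights---so you are supplying the details the paper leaves implicit rather than taking a different route.
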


To calculate $\mu (z, \lambda )$ we study relations between $30$ integer numbers 
$R_{ijk}$. 
By simple calculations, we obtain following Lemma: 
\begin{Lem} \label{Rijk}
For any $r_i \in {\mathbb Z}$ with $r_0 \geq r_1 \geq r_2$ and $ r_0 + r_1 + r_2 = 0$, 
we have that 
\begin{equation*} 
\begin{array}{c c c c c c c c c} 
R_{000} & & & & & & & & \\
\rotatebox{-90}{$\geq$ } & & & & & & & & \\
R_{001}=R_{100} & \! \geq \! & R_{002}=R_{010} & & & & & & \\
\rotatebox{-90}{$\geq$ } & &\rotatebox{-90}{$\geq$ } & & & & & & \\
R_{101}=R_{200} & \! \geq \! & R_{102}=R_{110} & \! \geq \! & R_{012}=R_{020} & & & & \\
 & & =R_{011} & & & & & & \\
\rotatebox{-90}{$\geq$ } & &\rotatebox{-90}{$\geq$ } & &\rotatebox{-90}{$\geq$ } & & & & \\
R_{201}=R_{300} & \! \geq \! & R_{202}=R_{111} & \! \geq \! & R_{112}=R_{021} & \! \geq \! & R_{022}=R_{030} & &\\
& & =R_{210} & & =R_{120} & & & &  \\
\rotatebox{-90}{$\geq$ } & &\rotatebox{-90}{$\geq$ } & &\rotatebox{-90}{$\geq$ } & &\rotatebox{-90}{$\geq$ } & & \\
R_{301} & \! \geq \! & R_{302}=R_{211} & \! \geq \! & R_{212}=R_{121} & \! \geq \! & R_{122}=R_{031} & \! \geq \! & R_{032}. \\
\end{array} 
\end{equation*}
\end{Lem}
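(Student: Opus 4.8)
The plan is to use the fact that each $R_{ijk}$ is a sum of exactly four of the three integers $r_0, r_1, r_2$, so it depends only on the multiplicities with which they occur. Concretely, let $a_\ell$ denote the number of elements equal to $\ell$ in the multiset consisting of $3-i-j$ copies of $0$, $i$ copies of $1$, $j$ copies of $2$, and the single element $k$; then $a_0 + a_1 + a_2 = 4$ and
\begin{equation*}
R_{ijk} = (3-i-j) r_0 + i r_1 + j r_2 + r_k = a_0 r_0 + a_1 r_1 + a_2 r_2 =: v(a_0, a_1, a_2).
\end{equation*}
First I would read off, from this bookkeeping alone, all the equalities displayed in the Lemma: two triples $(i,j,k)$ give the same $R_{ijk}$ exactly when they produce the same $(a_0, a_1, a_2)$, and the $15$ triples $(a_0, a_1, a_2)$ with $a_0 + a_1 + a_2 = 4$ are precisely the $15$ entries of the array, arranged so that the row (top to bottom) is the value of $a_0 = 4, 3, 2, 1, 0$ and the position within a row (left to right) is the value of $a_2 = 0, 1, \dots, 4 - a_0$.

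Next I would reduce the whole chain of inequalities, by transitivity, to the edges drawn in the array, and these come in only two types. A horizontal edge passes from $v(a_0, a_1, a_2)$ to $v(a_0, a_1 - 1, a_2 + 1)$, with
\begin{equation*}
v(a_0, a_1, a_2) - v(a_0, a_1 - 1, a_2 + 1) = r_1 - r_2 \geq 0,
\end{equation*}
and a vertical edge passes from $v(a_0, a_1, a_2)$ to $v(a_0 - 1, a_1 + 1, a_2)$, with
\begin{equation*}
v(a_0, a_1, a_2) - v(a_0 - 1, a_1 + 1, a_2) = r_0 - r_1 \geq 0.
\end{equation*}
Both are nonnegative by the hypothesis $r_0 \geq r_1 \geq r_2$, so every comparison asserted in the Lemma follows.

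I do not expect any genuine difficulty here: the statement is elementary and purely combinatorial. The only thing that needs care is the transcription in the first step, namely checking that the thirty numbers $R_{ijk}$ group into exactly the fifteen classes shown and that horizontal (respectively vertical) neighbours in the array differ by exchanging one summand $r_1$ for $r_2$ (respectively $r_0$ for $r_1$). Once the array is identified with the poset of multisets $(a_0, a_1, a_2)$ under these two elementary moves, the inequalities are immediate.
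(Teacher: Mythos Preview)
Your proposal is correct and matches the paper's approach: the paper states only that the Lemma follows ``by simple calculations'' and gives no further argument, so your systematic bookkeeping via the multiplicity vector $(a_0,a_1,a_2)$ is exactly an explicit version of those calculations. If anything, your observation that every horizontal (resp.\ vertical) edge in the array has difference $r_1-r_2$ (resp.\ $r_0-r_1$) is a cleaner way to organise the check than verifying each of the thirty values separately, but the content is the same.
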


\subsection{Unstable pairs}

We classify all pairs $z = (C, L)$ with $\mu(z,\lambda)<0$ 
for some normalized $1$-PS $\lambda$, 
where $C = V(F)$, $L = V(S)$, and $F$ and $S$ are given by (\ref{FS}). 
By Theorem \ref{criterion}, we know that any unstable pair is equivalent to one of such pairs under the action of ${\rm PGL} (3)$. 
Let $\displaystyle F_\ell (x_1, x_2) = \sum_{i+j=\ell} a_{ij} x_1^i x_2^j$. Then 
$F=F_3+x_0F_2+x_0^2F_1+x_0^3F_0$. 
\begin{Lem} \label{lemma1}
Assume $\mu(z,\lambda) < 0$ for some normalized $1$-PS $\lambda$. Then 
\begin{enumerate} [\rm (1)]
\item $a_{00} = a_{10} = b_0 = b_1 = 0$, 
\item $a_{00} = a_{10} = a_{01} = b_0 = 0$, or 
\item $a_{00} = a_{10} = a_{01} = a_{20} = a_{11} = 0$. 
\end{enumerate} 
\end{Lem}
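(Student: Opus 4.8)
The plan is to use Theorem~\ref{criterion} together with the inequalities of Lemma~\ref{Rijk} to locate, for a given normalized $1$-PS $\lambda(t) = {\rm Diag}(t^{r_0}, t^{r_1}, t^{r_2})$, exactly which monomials $x_0^{3-i-j}x_1^i x_2^j y_k$ must have vanishing coefficient $a_{ij}b_k$ in order that $\mu(z,\lambda) = \max\{R_{ijk} \mid a_{ij}b_k \neq 0\} < 0$. Since $r_0 \geq r_1 \geq r_2$ and $r_0 + r_1 + r_2 = 0$, we have $r_0 \geq 0 \geq r_2$ and $r_0 > 0$ (otherwise $\lambda$ is trivial). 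The Hasse-type diagram in Lemma~\ref{Rijk} tells us that the largest $R_{ijk}$ is $R_{000} = 3r_0$, and more generally the top-left entries dominate. So $\mu(z,\lambda) < 0$ forces the coefficients attached to the ``large'' $R_{ijk}$'s to vanish; the content of the lemma is to identify the three minimal patterns of forced vanishing, according to how ``deep'' into the diagram one has to go before the remaining $R_{ijk}$'s can all be made negative.

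First I would set up the bookkeeping: $H(x,y) = F(x)S(y)$ has monomials indexed by $(i,j,k)$ with $0 \le i, j$, $i+j \le 3$, $0 \le k \le 2$, and the coefficient of $x_0^{3-i-j}x_1^i x_2^j y_k$ is $a_{ij}b_k$. Note $a_{00}b_0$ is the coefficient attached to $R_{000}$, $a_{10}b_0$ and $a_{00}b_1$ to $R_{100} = R_{001}$, $a_{01}b_0$, $a_{00}b_2$ and $a_{10}b_1$ to $R_{010} = R_{002} = R_{101}$, wait---one must be careful, $R_{101}$ is strictly below $R_{010}$ in the diagram. Let me restate: I would list, for each node of the diagram, which products $a_{ij}b_k$ are attached to an $R_{ijk}$ at that node or above. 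Then $\mu(z,\lambda) < 0$ is equivalent to: every $R_{ijk}$ with $a_{ij}b_k \neq 0$ is negative. The strategy is to observe that which $R_{ijk}$ are negative depends on $\lambda$, but there is a finite list of ``$\lambda$-regions'' (chambers in the $(r_0,r_1,r_2)$-space), and in each region the set of negative $R_{ijk}$ is a down-set in the diagram. For each such down-set, the complementary up-set of indices must have all $a_{ij}b_k = 0$; intersecting these vanishing conditions over the finitely many maximal regions and translating $a_{ij}b_k = 0$ for all relevant $k$ (using that $S \neq 0$, so some $b_k \neq 0$, or else pushing the vanishing onto the $a_{ij}$) yields conditions (1), (2), (3).

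Concretely, I expect the three cases to come from the three ``extreme'' normalized $1$-PS's (or small families thereof): for $\lambda$ of weights roughly $(1,1,-2)$ one gets that the monomials not involving $x_2$ enough must vanish, forcing $a_{00} = a_{10} = a_{20} = a_{30} = a_{01} = a_{11} = \cdots$ down to the pattern in (3) after accounting for $S$; for weights roughly $(2,-1,-1)$ one forces $a_{00}, a_{10}$ and $b_0, b_1$ (giving (1)); and an intermediate weight like $(1,0,-1)$ gives (2). The key step is to show these finitely many $1$-PS's suffice---i.e., if $\mu(z,\lambda) < 0$ for \emph{some} $\lambda$, then it holds for one of the chosen representative $1$-PS's, or at least that the vanishing locus it imposes refines to one of (1)--(3). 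This follows because the function $\lambda \mapsto (\text{sign pattern of the } R_{ijk})$ is constant on the (finitely many) relatively open cones cut out by the hyperplanes $R_{ijk} = 0$ in the $2$-dimensional parameter space $\{r_0 \geq r_1 \geq r_2,\ \sum r_i = 0\}$, and on each such cone the set $\{(i,j,k) : R_{ijk} \geq 0\}$ is one of finitely many up-sets; one then checks case by case (using the diagram to minimize the number of cones to inspect) that each resulting vanishing condition on the $a_{ij}b_k$ implies (1), (2) or (3).

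The main obstacle will be the translation from ``$a_{ij}b_k = 0$ for all $(i,j,k)$ in some up-set $U$'' to a clean statement purely about the $a_{ij}$ (as in (1)--(3), where no $b_k$'s appear in cases describing $a$'s, and conversely). Since $z = (V(F), V(S)) \in {\mathbb P}(S^3V) \times {\mathbb P}(V)$, both $F \neq 0$ and $S \neq 0$, so at least one $a_{ij}$ and at least one $b_k$ are nonzero; the case division in the lemma is precisely the bookkeeping of how the support of $S$ (which $b_k$ vanish) interacts with the forced-vanishing up-set to pin down which $a_{ij}$ must die. I would handle this by splitting on $\mathop{\mathrm{supp}}(S) \subseteq \{y_k : \ldots\}$ into the cases $b_0 \neq 0$, $b_0 = 0 \neq b_1$, $b_0 = b_1 = 0 \neq b_2$, and in each reading off the implied vanishing of $a_{ij}$ from the up-set; the three cases of the lemma should emerge as the union over all admissible $\lambda$ of these conditions, after discarding non-minimal ones.
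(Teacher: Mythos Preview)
Your strategy is sound and would eventually yield the lemma, but it is considerably more elaborate than what is needed, and your expectation that the three cases correspond to three different ``extreme'' $1$-PS's is a misreading of where the trichotomy comes from.

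The paper's proof avoids any chamber decomposition by making a single observation valid for \emph{every} nontrivial normalized $\lambda$: namely $R_{102} = 2r_0 + r_1 + r_2 = r_0 \geq 0$. By the Hasse diagram of Lemma~\ref{Rijk}, every $R_{ijk}$ lying above $R_{102}$ is then also nonnegative, so $\mu(z,\lambda) < 0$ forces the ten products
\[
a_{00}b_0,\; a_{00}b_1,\; a_{10}b_0,\; a_{00}b_2,\; a_{01}b_0,\; a_{10}b_1,\; a_{20}b_0,\; a_{10}b_2,\; a_{11}b_0,\; a_{01}b_1
\]
to vanish, uniformly in $\lambda$. The three cases of the lemma then arise \emph{solely} from the case split on the support of $S$ that you describe at the end of your proposal: $b_0 \neq 0$ gives (3), $b_0 = 0 \neq b_1$ gives (2), and $b_0 = b_1 = 0 \neq b_2$ gives (1). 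No analysis of separate $\lambda$-regions is needed.

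What your approach buys is generality: the chamber method works for arbitrary GIT problems where no single $R_{ijk}$ happens to be universally nonnegative. What the paper's approach buys is a two-line proof. Your final paragraph already contains the decisive case split; the missing shortcut is just the identity $R_{102} = r_0$.
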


\begin{proof}
Since $\mu(z,\lambda) < 0$
for some normalized $1$-PS $\lambda$ and we have $R_{102} = r_0 \geq 0$, we obtain 
$a_{00} b_0 = a_{00} b_1 = a_{10} b_0 = a_{00} b_2 = a_{01} b_0 = a_{10} b_1 = a_{20} b_0 = a_{10} b_2 = a_{11} b_0 = a_{01} b_1 = 0$ by Lemma \ref{Rijk}. 
Then if $b_0 \not = 0$ (resp. $b_0 = 0$ and $b_1 \not = 0$, or $b_0 = b_1 = 0$ and $b_2 \not = 0$), then we obtain (3) (resp. (2), or (1)). 
\end{proof}

\begin{Prop}\label{list of unstable pairs}
The pair $(C, L)$ is unstable 
if and only if 
one of the following is true: 
\begin{enumerate} [\rm (i)]
\item $L$ is a triple tangent to $C$,
\item $L$ is contained in $C$,
\item $L$ passes through a double point of $C$, 
\item $C$ has a triple point, 
\item $C$ is nonreduced.
\end{enumerate} 
\end{Prop}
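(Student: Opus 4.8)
The plan is to prove both implications with the numerical criterion (Theorem~\ref{criterion}) and Lemma~\ref{lemma1}, after noting that each of the conditions (i)--(v) is ${\rm PGL}(3)$-invariant. The one bookkeeping fact I will use throughout is that, writing $z=(V(F),V(S))$, the identity $R_{ijk}=\bigl[(3-i-j)r_{0}+ir_{1}+jr_{2}\bigr]+r_{k}$ makes the maximum split:
\[
\mu(z,\lambda)=\mu_{F}(\lambda)+\mu_{S}(\lambda),\qquad \mu_{F}(\lambda)=\max\{(3-i-j)r_{0}+ir_{1}+jr_{2}\mid a_{ij}\neq0\},\quad \mu_{S}(\lambda)=\max\{r_{k}\mid b_{k}\neq0\}.
\]
Here $\mu_{S}(\lambda)\le r_{0}$ always, and $\mu_{S}(\lambda)\le r_{1}$ whenever $b_{0}=0$, i.e.\ whenever $L$ passes through $[1:0:0]$.

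\emph{Sufficiency.} I would put each type into a normal form and exhibit a destabilizing normalized $1$-PS. (i): if $L$ is a triple tangent to $C$, place the tangency point at $[1:0:0]$ and $L=V(x_{2})$, so $F=a_{30}x_{1}^{3}+x_{2}G$ with $a_{30}\neq0$; then $\mu_{F}(\lambda)\le\max(3r_{1},2r_{0}+r_{2})$, $\mu_{S}(\lambda)=r_{2}$, and $\lambda={\rm Diag}(t^{3},t,t^{-4})$ gives $\mu(z,\lambda)\le\max(3r_{1}+r_{2},\,2r_{0}+2r_{2})=-1<0$. (ii): if $L=V(x_{2})\subset C$ then $F=x_{2}Q$, so $\mu_{F}(\lambda)\le 2r_{0}+r_{2}$ and $\mu_{S}(\lambda)=r_{2}$; (v): if $C$ is non-reduced, a coordinate change gives $F=x_{2}^{2}m$ with $m$ linear, so $\mu_{F}(\lambda)\le r_{0}+2r_{2}$ and $\mu_{S}(\lambda)\le r_{0}$ --- in both (ii) and (v), $\lambda={\rm Diag}(t,t,t^{-2})$ gives $\mu(z,\lambda)\le 2r_{0}+2r_{2}=-2r_{1}=-2<0$. (iii): if $L$ passes through a double point of $C$, place that point at $[1:0:0]$, so $a_{00}=a_{10}=a_{01}=0$ (hence $\mu_{F}(\lambda)\le r_{0}+2r_{1}$) and $b_{0}=0$ (hence $\mu_{S}(\lambda)\le r_{1}$); (iv): if $C$ has a triple point, place it at $[1:0:0]$, so $F=F_{3}(x_{1},x_{2})$ (hence $\mu_{F}(\lambda)\le 3r_{1}$) and $\mu_{S}(\lambda)\le r_{0}$ --- in both (iii) and (iv), $\lambda={\rm Diag}(t^{2},t^{-1},t^{-1})$ gives $\mu(z,\lambda)\le r_{0}+3r_{1}=-1<0$.

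\emph{Necessity.} Suppose $z=(C,L)$ is unstable. By Theorem~\ref{criterion}, replacing $z$ by a ${\rm PGL}(3)$-translate (all of (i)--(v) being ${\rm PGL}(3)$-invariant) I may assume $\mu(z,\lambda)<0$ for some normalized $\lambda$, so one of the alternatives (1),(2),(3) of Lemma~\ref{lemma1} holds. In case (2), $a_{00}=a_{10}=a_{01}=0$ forces ${\rm mult}_{[1:0:0]}C\ge2$ and $b_{0}=0$ forces $[1:0:0]\in L$, so $(C,L)$ is of type (iii), (iv) or (v). In case (1) we have $b_{0}=b_{1}=0$, so $b_{2}\neq0$: if $a_{01}=0$ the hypotheses of case (2) hold; if $a_{01}\neq0$, then $\mu(z,\lambda)<0$ together with $R_{012}=-2r_{1}$ and $R_{202}=r_{1}$ forces $a_{20}=0$, hence $F|_{x_{2}=0}=a_{30}x_{1}^{3}$, so either $a_{30}=0$ and $x_{2}\mid F$, i.e.\ $L=V(x_{2})\subset C$ (type (ii)), or $a_{30}\neq0$, in which case $F|_{L}$ vanishes to order $3$ at $[1:0:0]$ while $(\partial F/\partial x_{2})(1,0,0)=a_{01}\neq0$ and the remaining first partials vanish there, so $[1:0:0]$ is a smooth point of $C$ with tangent line $L$ and $L$ is a triple tangent (type (i)). In case (3), if $b_{0}=0$ the hypotheses of case (2) hold; otherwise $\mu(z,\lambda)<0$ forces $R_{ij0}<0$ whenever $a_{ij}\neq0$, and since $R_{020}=-2r_{1}$, $R_{210}=r_{1}$, $R_{300}=r_{0}+3r_{1}$ and $r_{0}>0$, either $a_{02}=0$, so $F=F_{3}(x_{1},x_{2})$ and $C$ is a union of three concurrent lines (type (iv) if reduced, type (v) otherwise), or $a_{02}\neq0$, which forces $a_{21}=a_{30}=0$ and hence $F=x_{2}^{2}(a_{02}x_{0}+a_{12}x_{1}+a_{03}x_{2})$, so $C$ is non-reduced (type (v)).

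The step I expect to require the most care is the necessity argument in cases (1) and (3): Lemma~\ref{lemma1} by itself is too coarse --- among the pairs allowed by its normal forms is, for instance, a smooth conic together with a line tangent to it and a general line, which is \emph{not} unstable --- so one has to feed the sign restrictions on $(r_{0},r_{1},r_{2})$ coming from the sharp inequalities $R_{ijk}<0$ back into the coefficient conditions, and it is exactly those restrictions that collapse $C$ to a triple line, a double line, or a flex configuration.
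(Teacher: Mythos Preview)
Your proof is correct and follows essentially the same approach as the paper's: the same case split via Lemma~\ref{lemma1} for necessity, and the same normal forms with essentially the same destabilizing one-parameter subgroups for sufficiency (you use $r=(1,1,-2)$ for case~(ii) where the paper reuses $r=(3,1,-4)$, but both work). Your explicit splitting $\mu(z,\lambda)=\mu_F(\lambda)+\mu_S(\lambda)$ is a clean bookkeeping device that the paper leaves implicit.
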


\begin{proof}
We first prove the only if part. 
Let $z = (C, L)$ be unstable. Then we have $\mu(z,\lambda) < 0$ for some normalized $1$-PS $\lambda$. 
Hence (1), (2) or (3) in Lemma \ref{lemma1} is true. 

When (1) is true, we may assume that $a_{01} \not = 0$, since if $a_{01} = 0$ then we obtain the case (2). 
If $a_{20} \not = 0$, then by Lemma \ref{Rijk}, 
\begin{align*}
0 > \mu(z,\lambda) &= {\rm max} \{ R_{012}, R_{202} \} = {\rm max} \{ -2 r_1,  r_1 \} ,
\end{align*}
which is absurd. This shows $a_{20}=0$. 
Hence we obtain 
$
C : x_2 (a_{21} x_1^2 + a_{12} x_1 x_2 + a_{03} x_3^2 + a_{11} x_0 x_1 + a_{02} x_0 x_2 + a_{01} x_0^2) + a_{30} x_1^3 = 0
$
and $L : x_2 = 0$. 
If $a_{30}\neq 0$, then $L$ is a triple tangent to $C$ at $(1 : 0 : 0)$, 
and if $a_{30} = 0$, then $L$ is contained in $C$. 

When (2) is true, 
$L : b_1 x_1 + b_2 x_2 = 0$ passes through a double point $(1 : 0 : 0)$ of $C : F_3 + x_0 F_2 = 0$. 

When (3) is true, we may assume that $b_0 \not = 0$, since if $b_0 = 0$ then we obtain the case (2). 
If $a_{02} = 0$, then $C : F_3 = 0$. Hence $C$ has a triple point $(1 : 0 : 0)$. 
Let $a_{02} \not = 0$. If $a_{30}$ or $a_{21} \not = 0$, then by Lemma \ref{Rijk}, 
\begin{align*}
0 > \mu(z,\lambda) \geq  {\rm max} \{ R_{020}, R_{210} \} = {\rm max} \{ -2 r_1, r_1 \} ,
\end{align*}
which is absurd. Hence $a_{30} = a_{21} = 0$. Thus 
$C : x_2^2 (a_{02} x_0 + a_{12} x_1 + a_{03} x_2 ) = 0$ with $a_{02} \not = 0$, and hence $C$ is nonreduced. 

Next we prove the if part. 
When (i) or (ii) is true, 
there exists some $g \in {\rm PGL} (3)$ such that 
$g \cdot L : x_2 = 0$ and 
$g \cdot C : x_2 A + a_{30} x_1^3 = 0$ for some quadratic $A$.  
In particular, 
$b_0 = b_1 =0,b_2\neq 0$ and $a_{00} = a_{10} = a_{20}=0$. 
Then by Lemma \ref{Rijk}, we obtain 
\begin{align*}
\mu(g \cdot z ,\lambda) &\leq  \max \{ R_{012}, R_{302} \} = \max \{ - 2 r_1, 2 r_1 - r_0 \} = -1
\end{align*}
for $r = (3, 1, -4)$. Hence $z = (C, L)$ is unstable by Theorem \ref{criterion}. 

When (iii) is true, 
there exists some $g \in {\rm PGL} (3)$ such that 
$g \cdot L : x_2 = 0$ 
and $g \cdot C : x_0F_2+F_3 = 0$. 
In particular, $a_{00} = a_{10} = a_{01} = 0$ and $b_0 = b_1 = 0$. 
Then by Lemma \ref{Rijk}, we obtain 
\begin{align*}
\mu (g \cdot z, \lambda ) \leq R_{202} = r_0 + 2r_1 + r_2 = r_1 = -1
\end{align*}
for $r = (2, -1, -1)$, and hence $z = (C,L)$ is unstable by Theorem \ref{criterion}. 

When (iv) is true, there exists some $g \in {\rm PGL} (3)$ such that 
$g\cdot C : F_3 = 0$. In particular, $F_0 = F_1 = F_2 = 0$. 
Then by Lemma \ref{Rijk}, we obtain 
$$\mu (g \cdot z, \lambda ) \leq R_{3 0 0} = 3 r_1 + r_0 = -1$$
for $r = (2, -1, -1)$, and hence $z = (C, L)$ is unstable by Theorem \ref{criterion}. 

When (v) is true, there exists some $g \in {\rm PGL} (3)$ such that 
$g\cdot C : x_2^2 (a_{02} x_0 + a_{12} x_1 + a_{03} x_2 ) = 0$. In particular, 
$a_{00} = a_{10} = a_{01} = a_{20} = a_{11} = a_{30} = a_{21} = 0$. 
Then by Lemma \ref{Rijk}, we obtain 
$\mu (g \cdot z, \lambda ) \leq R_{0 1 2} = -2 r_1 = - 2$ 
for $r = (1, 1, -2)$, and hence $z = (C, L)$ is unstable by Theorem \ref{criterion}. 
\end{proof}

\subsection{Semistable and stable pairs}

From Proposition \ref{list of unstable pairs}, 
we obtain 
\begin{Prop} \label{all semistable pairs}
The pair $(C, L)$ is semistable 
if and only if 
any of the following is true: 
\begin{enumerate} [{\rm (i)}]
\item $C$ is reduced and 
does not have a triple point, 
\item $L$ is not contained in $C$,
\item $L$ does not pass through any double point of $C$, 
\item $L$ is not a triple tangent to $C$. 
\end{enumerate}
\end{Prop}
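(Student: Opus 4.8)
The plan is to read Proposition~\ref{all semistable pairs} off Proposition~\ref{list of unstable pairs}, using only the definition that a pair $z=(C,L)\in{\mathbb P}(S^3V)\times{\mathbb P}(V)$ is semistable precisely when it is not unstable.

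First I would restate Proposition~\ref{list of unstable pairs} in the form: $(C,L)$ fails to be semistable if and only if at least one of the following holds --- ($\alpha$) $L$ is a triple tangent to $C$; ($\beta$) $L$ is contained in $C$; ($\gamma$) $L$ passes through a double point of $C$; ($\delta$) $C$ has a triple point; ($\varepsilon$) $C$ is nonreduced. Taking the contrapositive, $(C,L)$ is semistable if and only if none of ($\alpha$)--($\varepsilon$) holds, which by De Morgan is the conjunction of the five negations: $L$ is not a triple tangent to $C$, $L\not\subset C$, $L$ does not pass through any double point of $C$, $C$ has no triple point, and $C$ is reduced.

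It then only remains to match these negations with the items of the proposition (reading ``any of the following'' as ``all of the following''): item~(i) packages $\lnot\varepsilon$ (``$C$ reduced'') together with $\lnot\delta$ (``$C$ has no triple point''), while items~(ii), (iii), (iv) are respectively $\lnot\beta$, $\lnot\gamma$, $\lnot\alpha$. This yields the stated equivalence; no new use of the Hilbert--Mumford numerical criterion (Theorem~\ref{criterion}) or of the inequalities in Lemma~\ref{Rijk} is needed beyond what already went into Proposition~\ref{list of unstable pairs}.

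I do not expect any real obstacle: the content is purely the logical negation of an already-proved classification, so the only thing to watch is bookkeeping --- in particular that bundling $\lnot\delta\wedge\lnot\varepsilon$ into a single clause~(i) is faithful, and that the words ``triple point'', ``double point'', ``triple tangent'' and ``reduced'' are used exactly as in Proposition~\ref{list of unstable pairs}. As a sanity check one can test a few representative pairs: a smooth cubic meeting a line transversally, or an irreducible nodal cubic together with a line avoiding the node, satisfy (i)--(iv) and are semistable, whereas a triple line or a cuspidal cubic with a line through the cusp violate the list and are unstable.
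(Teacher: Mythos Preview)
Your proposal is correct and matches the paper's own treatment exactly: the paper does not give a separate proof of Proposition~\ref{all semistable pairs} but simply states it as the immediate contrapositive of Proposition~\ref{list of unstable pairs}. Your write-up spells out the De~Morgan bookkeeping and the bundling of $\lnot\delta\wedge\lnot\varepsilon$ into clause~(i) more explicitly than the paper does, which is fine.
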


Next we classify stable pairs. For it, we classify 
semistable pairs $z = (C, L)$ with $\mu(z,\lambda)=0$ 
for some nontrivial normalized $1$-PS $\lambda$. 
In this case, $z$ is semistable but not stable by Theorem \ref{criterion}. 

\begin{Def}
We say that   
{\em $L$ is $2$-tangent to $C$} if 
$L$ is tangent to an irreducible cubic $C$, 
but not triply tangent, at a smooth point of $C$, and 
{\em $L$ is $3$-tangent to $C$} if $L$ is triply tangent to $C$.
\end{Def}

\begin{Prop} \label{ssnots}
Let $(C, L)$ be semistable. Then $(C, L)$ is not stable 
if and only if 
one of the following is true: 
\begin{enumerate} [\rm (i)]
\item $L$ is $2$-tangent to $C$, 
\item $C$ is an irreducible conic $Q$ plus a line $L'$, and $L'$ is tangent to $Q$. 
\end{enumerate} 
\end{Prop}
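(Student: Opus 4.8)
The plan is to run the Hilbert--Mumford calculus exactly as in the proof of Proposition \ref{list of unstable pairs}. Since $(C,L)$ is semistable, Theorem \ref{criterion} shows that it fails to be stable precisely when there exist $g\in\mathrm{PGL}(3)$ and a nontrivial normalized $1$-PS $\lambda$ with $\mu(g\cdot z,\lambda)\le 0$; combined with $\mu(g\cdot z,\lambda)\ge 0$ this forces $\mu(g\cdot z,\lambda)=0$. So, after replacing $z=(C,L)$ by $g\cdot z$, it suffices to (a) classify up to $\mathrm{PGL}(3)$ the semistable pairs with $\mu(z,\lambda)=0$ for some nontrivial normalized $\lambda$, and (b) conversely produce such a $\lambda$ for the pairs listed in (i) and (ii).

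For (a), I would first note that the proof of Lemma \ref{lemma1} goes through with the weaker hypothesis $\mu(z,\lambda)=0$: all ten monomials treated there have $R_{ijk}\ge R_{102}=r_0>0=\mu(z,\lambda)$ (using that $\lambda$ is nontrivial), so their coefficients still vanish, and one of the three alternatives of Lemma \ref{lemma1} holds. Then one re-runs the geometric discussion of the proof of Proposition \ref{list of unstable pairs} in each alternative, retaining the subcases that were previously eliminated by a strict inequality $0>\mu(z,\lambda)$. For instance, in alternative (1) one checks that $a_{01}\neq 0$ and $a_{20}\neq 0$ (otherwise $(C,L)$ would already be unstable, contrary to hypothesis), the inequality $\mu(z,\lambda)\ge\max\{R_{012},R_{202}\}=\max\{-2r_1,r_1\}$ then forces $r_1=0$, and a short computation with $\lambda(t)=\mathrm{Diag}(t,1,t^{-1})$ via Lemma \ref{Rijk} confirms $\mu(z,\lambda)=0$; geometrically this says $L$ meets $C$ at the smooth point $(1:0:0)$ with contact of order exactly $2$. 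Alternative (3) is analysed the same way and yields that $x_2=0$ is a component of $C$ tangent to a residual irreducible conic, i.e.\ case (ii), while alternative (2) reduces to (1) or (3). Thus alternative (1) with $C$ irreducible gives (i) and alternative (3) gives (ii); the reducible cubics arising in alternative (1) (a conic plus a line) must be matched with --- possibly after a further degeneration along $\lambda$ --- case (ii).

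For (b), I would bring $(C,L)$ into the normal form just described: in case (i) take $L:x_2=0$, so $a_{00}=a_{10}=0$ with $a_{01},a_{20}\neq 0$; in case (ii) put the point where $L'$ touches $Q$ at $(1:0:0)$ and $L':x_2=0$, so $a_{00}=a_{10}=a_{01}=a_{20}=a_{11}=a_{30}=0$ with $a_{21}\neq 0$ (irreducibility of $Q$) and $a_{02}\neq 0$. In both cases a direct evaluation with Lemma \ref{Rijk} gives $\mu(z,\lambda)=0$ for $\lambda(t)=\mathrm{Diag}(t,1,t^{-1})$, and $(C,L)$ is semistable by Proposition \ref{all semistable pairs} ($C$ is reduced without a triple point, $L\not\subset C$, $L$ passes through no double point of $C$, and $L$ is not a triple tangent). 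Hence $(C,L)$ is not stable by Theorem \ref{criterion}.

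The main obstacle is part (a): within each alternative of Lemma \ref{lemma1} one must carefully separate the subcases in which the strict sign makes the pair unstable (and which are therefore excluded by the semistability assumption) from those in which $\mu(z,\lambda)=0$ is genuinely attained, and then translate the surviving coefficient conditions into the geometric descriptions (i)--(ii). The delicate point here is the treatment of the reducible cubics, where the notions ``tangent to $C$ at a smooth point'' and ``double point of $C$'' behave differently from the irreducible case; the bookkeeping with the thirty numbers $R_{ijk}$ is kept under control by the partial order recorded in Lemma \ref{Rijk}.
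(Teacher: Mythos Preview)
Your overall strategy is exactly the paper's: rerun Lemma~\ref{lemma1} with the hypothesis $\mu(z,\lambda)=0$ (using $R_{102}=r_0>0$), then separate in each alternative the subcases that force unstability from those that survive. Two points in your execution need correction.

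First, alternative~(2) does not ``reduce to (1) or (3)''. In case~(2) one has $a_{00}=a_{10}=a_{01}=b_0=0$, so $(1:0:0)$ is a double point of $C$ and $L=V(b_1x_1+b_2x_2)$ passes through it; by Proposition~\ref{list of unstable pairs}\,(iii) the pair is unstable, contradicting the hypothesis. So alternative~(2) is simply excluded, as the paper does in one line.

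Second, and more seriously, your treatment of reducible cubics in alternative~(1) is off. The coefficient conditions $a_{00}=a_{10}=0$, $a_{01}a_{20}\neq 0$, $L=V(x_2)$ already give $C\cap L=(x_2=0,\ x_1^2(a_{20}x_0+a_{30}x_1)=0)$ and show that $L$ is tangent to $C$ with contact order exactly~$2$ at the smooth point $(1:0:0)$; this is what the paper means by ``$L$ is $2$-tangent to $C$'' and is the conclusion~(i), irrespective of whether $C$ is irreducible. Your suggestion that the reducible pairs here ``must be matched with --- possibly after a further degeneration along $\lambda$ --- case~(ii)'' is wrong: take $C=Q+L'$ with $L'$ transversal to $Q$ and $L$ tangent to $Q$ (the locus $S_5$ in Table~\ref{sspair}); this pair is semistable, not stable, and clearly does \emph{not} satisfy~(ii) since $L'$ is not tangent to $Q$. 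No degeneration can change that. The paper's proof simply records~(i) in alternative~(1) without any irreducibility check; the word ``irreducible'' in the definition of $2$-tangent should be read as a slip, as Table~\ref{sspair} confirms.

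With these two fixes your argument coincides with the paper's.
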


\begin{proof}
We first prove the only if part. 
Let $z = (C, L)$ be semistable but not stable. Then we have $\mu(z,\lambda) = 0$ for some nontrivial normalized $1$-PS $\lambda$. 
Since $R_{102} = r_0 > 0$, by same arguments as the proof 
of Lemma \ref{lemma1}, we obtain (1), (2) or (3) in Lemma \ref{lemma1}. 
However, in the case (2), $L$ passes through a double point of $C$. Hence $(C, L)$ is unstable, which is absurd. 
Thus (1) or (3) is true. 

When (1) is true, if $a_{01}$ or $a_{20} = 0$, then $(C, L)$ is unstable by the proof of Proposition \ref{list of unstable pairs}. 
Hence $a_{01} a_{20} \not = 0$. 
Then $C \cap L = (x_2 = 0, x_1^2 (a_{20} x_0 + a_{30} x_1) = 0)$, and hence 
$L$ is $2$-tangent to $C$ at $(1 : 0 : 0)$. 

When (3) is true, if $b_0$ or $a_{02} = 0$, then $(C, L)$ is unstable by the proof of Proposition \ref{list of unstable pairs}. 
Thus $a_{02} b_0 \not = 0$. If $a_{30} \not = 0$, then by Lemma \ref{Rijk}, 
$$
0 = \mu (z , \lambda) = {\rm max} \{ R_{020}, \ R_{300} \}={\rm max} \{ -2r_1, r_0 + 3 r_1 \} > 0, 
$$
which is absurd. Hence $a_{30} = 0$. If $a_{21} = 0$, then $C$ is nonreduced. Hence $(C, L)$ is unstable, which is absurd. 
Thus $a_{21} \not = 0$. Then $C : a_{21} x_1^2 x_2 + a_{12} x_1 x2^2 + a_{03} x_2^3 + a_{02} x_0 x_2^2 = 0$ with $a_{21} a_{02} \not = 0$, 
and hence $C$ is an irreducible conic $Q : a_{21} x_1^2 + x_2 (a_{02} x_0 + a_{12} x_1 + a_{03} x_2 )=0$ plus a line $L' : x_2 = 0$, and $L'$ is tangent to $Q$ at $(1 : 0 : 0)$. 

Next we prove the if part. 
In the case (i), 
there exists some $g \in {\rm PGL} (3)$ such that $g\cdot L : x_2 = 0$ and 
$g \cdot C : a_{01}x_0^2x_2+A(x)x_2+ x_1^2 (a_{20}x_0+a_{30}x_1) = 0$ with some 
quadratic $A(x)$ of degree at most one in $x_0$. 
Since $g \cdot L$ does not pass 
through any singular points of $g \cdot C$, 
$(1 : 0 : 0)$ is not a singular point of $g \cdot C$ and hence $a_{01}\neq 0$.
Since $g \cdot L$ is not $3$-tangent to $g \cdot C$, we have $a_{20}\neq 0$. 
Therefore $a_{00} = a_{10} = b_0 = b_1 = 0$ and $a_{01} a_{20} \not = 0$. 
By Lemma \ref{Rijk}, we obtain 
\begin{align*}
\mu( g \cdot z,\lambda) = \max\{R_{012}, R_{202} \} = \max \{ - 2 r_1, r_1 \} = 0
\end{align*}
for $r=(1, 0, -1)$. 
Hence $z$ is not stable by Theorem \ref{criterion}. 

In the case (ii), there exists some $g \in {\rm PGL} (3)$ such that 
$g \cdot L:x_0=0$ and $g \cdot C = Q + L'$, where $Q:x_2(a_{02}x_0+a_{12}x_1+a_{03}x_2)+a_{21} x_1^2=0$ and $L' : x_2=0$. 
Since $Q$ is irreducible, we have $a_{02} a_{21} \neq 0$. 
In particular, $a_{00} = a_{10} = a_{01} = a_{20} = a_{11} = a_{30} = 0$ and $a_{21} a_{02} b_0 \neq 0$. 
Then by Lemma \ref{Rijk}, we obtain 
$\mu(g \cdot z, \lambda) = \max \{ R_{020}, R_{210} \}
=\max \{-2 r_1, r_1\} = 0$ for $r=(1,0,-1)$, and hence 
$z$ is not stable by Theorem \ref{criterion}. 
\end{proof}

By Proposition \ref{ssnots}, 
we obtain the complete classification of semistable and stable pairs, which is given in Table \ref{sspair} below. 
We denote by $S_k \subset {\mathbb P}(S^3 V) \times {\mathbb P}(V)$ 
the locus of semistable pairs in the column $k$ in Table \ref{sspair}. 
\begin{table}[htb]
\begin{center}
\caption{The semistable and stable pairs $(C, L)$} \label{sspair}
\begin{tabular}{|c|c|c|c|}
\hline
\multicolumn{1}{|c|}{{$C$}} &\multicolumn{1}{|c|}{ $C$ and $L$} &\multicolumn{1}{|c|}{{\rm stability}} & \multicolumn{1}{|c|}{{$k$}} \\ \hline \hline
smooth & $L$ : transv. to $C$ & stable & $1$ \\ \cline{2-4}
 & $L$ : $2$-tangent to $C$ & semistable & $2$\\ \hline
$3$-gon & $L$ : transv. to $C$ & stable & $3$\\ \hline
line $L'$ $+$ conic $Q$ & $L, L'$ : transv. to $Q$ & stable & $4$\\ \cline{2-4}
 & $L$ : tangent to $Q$ & semistable & $5$\\ \cline{2-4}
 & $L'$ : tangent to $Q$ & semistable & $6$\\ \cline{2-4}
 & $L, L'$ : tangent to $Q$ & semistable & $7$\\ \hline
 irred. a node & $L$ : transv. to $C$ & stable & $8$\\ \cline{2-4}
 & $L$ : $2$-tangent to $C$ & semistable & $9$\\ \hline
 irred. a cusp & $L$ : transv. to $C$ & stable & $10$\\ \cline{2-4}
 & $L$ : $2$-tangent to $C$ & semistable & $11$\\ \hline
\end{tabular}
\end{center}
\end{table}

\subsection{Nontrivial identifications of semistable pairs in $\overline{P}_{1, 3}$}

We give nontrivial identifications of semistable pairs in the GIT moduli
$$
\phi : \left( {\mathbb P}(S^3 V) \times {\mathbb P}(V) \right)^{ss} \longrightarrow \overline{P}_{1, 3} = \left( {\mathbb P}(S^3 V) \times {\mathbb P}(V) \right)^{ss} /\!/ {\rm PGL} (3) .
$$

\begin{Lem} \label{loci}
We define semistable pairs $z_i \in S_i$ ($i = 3, 5, 6, 7, 11$) as follows: 
\begin{gather*}
z_3 = \left( V( x_0 x_1 x_2), V (x_0+x_1+x_2) \right), \ 
z_5 = \left( V( x_0 (x_0 x_2 + x_1 (x_1 + x_2 ) )) , V( x_2) \right), \\
z_6 = \left( V( x_0 (x_0 x_2 + x_1 (x_0 + x_1 ) ) ), V( x_2) \right), \ 
z_7 = \left( V( x_0 (x_0 x_2 + x_1^2 ) ), V ( x_2 ) \right), \\
z_{11} = \left( V (x_0^2 x_2 + x_1^2 (x_0 + x_1) ), V( x_2 ) \right) .
\end{gather*}
Then we have 
$\displaystyle 
S_i = \left\{ 
\begin{array}{ll}
{\rm PGL} (3) \cdot z_i & (i = 3, 7, 11), \\
{\rm PGL} (3) \cdot z_i \cup {\rm PGL} (3) \cdot z_7 & (i = 5, 6). \\
\end{array}
\right.$
\end{Lem}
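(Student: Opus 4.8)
The plan is to show, for each $i \in \{3,5,6,7,11\}$, that the stratum $S_i$ decomposes into the stated union of $\mathrm{PGL}(3)$-orbits by putting an arbitrary pair in $S_i$ into a normal form and then reducing that normal form, via coordinate changes, to one of the listed representatives $z_i$ (or $z_7$). The key structural input is that $\mathrm{PGL}(3)$ acts transitively on a great deal of incidence data: any line can be moved to $x_2=0$; any smooth point of a cubic lying on that line can be moved to $(1:0:0)$; and the residual stabilizer (triangular matrices fixing $x_2=0$ and $(1:0:0)$) still acts, letting one scale and translate away further coefficients. So the proof is essentially a sequence of explicit but routine normalizations, the same style of computation already used in the proofs of Propositions \ref{list of unstable pairs} and \ref{ssnots}.

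Concretely, I would proceed stratum by stratum. For $i=3$: a $3$-gon is a union of three non-concurrent lines, and $\mathrm{PGL}(3)$ acts transitively on ordered triangles, so $C$ can be normalized to $V(x_0x_1x_2)$; the stabilizer of this triangle is the torus together with the $S_3$ permuting the coordinates (a finite group), and a line $L=V(b_0x_0+b_1x_1+b_2x_2)$ transverse to $C$ meets none of the three vertices, i.e.\ all $b_k\neq 0$, so the torus scales it to $V(x_0+x_1+x_2)$; hence $S_3=\mathrm{PGL}(3)\cdot z_3$. For $i=11$ (irreducible cuspidal cubic with a $2$-tangent line): from the proof of Proposition \ref{ssnots}, after moving $L$ to $x_2=0$ and the tangency point to $(1:0:0)$ one has $C:\,a_{01}x_0^2x_2+A(x)x_2+x_1^2(a_{20}x_0+a_{30}x_1)=0$ with $a_{01}a_{20}\neq0$; the cusp condition on an irreducible cubic pins down the remaining coefficients up to the triangular stabilizer, and scaling/translating (replacing $x_0,x_1,x_2$ by suitable multiples and adding multiples of $x_1,x_2$ to $x_0$) brings $C$ to $x_0^2x_2+x_1^2(x_0+x_1)$; so $S_{11}=\mathrm{PGL}(3)\cdot z_{11}$.

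The conic-plus-line strata $i=5,6,7$ are where the union phenomenon appears and thus the main obstacle. Here $C=Q+L'$ with $Q$ an irreducible conic; $\mathrm{PGL}(3)$ acts transitively on (irreducible conic, line) incidence types, so one can fix $Q$ and $L'$ in standard position — e.g.\ $L':x_2=0$ and $Q$ tangent to $L'$ in strata $5,6,7$, tangent at $(1:0:0)$ — and then the problem becomes understanding the orbits of the remaining line $L=V(S)$ under the stabilizer of the pair $(Q,L')$. In stratum $7$ (so $L$ itself tangent to $Q$) that stabilizer is large enough to move $L$ to any tangent line of $Q$ distinct from $L'$, giving a single orbit $\mathrm{PGL}(3)\cdot z_7$. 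In strata $5$ and $6$ the generic configuration (the various tangency/transversality incidences among $L,L',Q$, and which line is tangent where, being distinct) again forms one orbit, represented by $z_5$ respectively $z_6$; but there is a degenerate sub-configuration — when $L$ passes through the point $Q\cap L'$, i.e.\ the two tangency points collide — and one must check that this degenerate case is exactly the orbit of $z_7$ sitting inside the closure of $S_5$ and $S_6$. The bookkeeping of these incidence degenerations, and verifying the degenerate locus is precisely $\mathrm{PGL}(3)\cdot z_7$ and nothing more, is the delicate part; everything else is a direct normalization in the spirit of the earlier proofs.
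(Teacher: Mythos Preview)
Your handling of $i=3$ matches the paper, and for $i=11$ you propose the reverse order of normalization (fix $L$ first, then $C$) from the paper's (fix $C$ to the standard cusp $x_0^2x_2+x_1^3$, then reduce $L$); either order can be made to work, though your sketch leaves the actual reduction undone.

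For $i=5,6,7$ there is a genuine misreading that derails your plan. From Table~\ref{sspair}, $S_5$ is the locus where $L$ is tangent to $Q$, with \emph{no} constraint on $L'$; indeed the representative $z_5$ has $L'$ \emph{transversal} to $Q$ (on $x_0=0$ the conic $x_0x_2+x_1^2+x_1x_2$ becomes $x_1(x_1+x_2)$, two distinct points). So your normalization ``$Q$ tangent to $L'$ in strata $5,6,7$'' is already wrong for $S_5$, and more to the point the pair $(Q,L')$ itself lies in two different $\mathrm{PGL}(3)$-orbits inside $S_5$ according as $L'$ is tangent or transversal to $Q$ --- this \emph{is} the $z_5$/$z_7$ dichotomy, not a residual phenomenon to be detected later in the position of $L$. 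Your proposed degenerate sub-configuration ``$L$ passes through the point $Q\cap L'$'' is in fact vacuous: every point of $Q\cap L'$ is a double point of $C=Q+L'$, and by Proposition~\ref{all semistable pairs} a semistable $L$ never passes through a double point of $C$, so this locus is empty and cannot account for the $z_7$-orbit. The correct degeneration inside $S_5$ (resp.\ $S_6$) is that the \emph{other} line $L'$ (resp.\ $L$) becomes tangent to $Q$ as well, landing in $S_7$. The paper exposes this cleanly by normalizing both lines to $V(x_0)$ and $V(x_2)$ first and then reducing $Q$; a single surviving coefficient of $Q$ vanishes exactly when the second tangency occurs, giving $z_7$, and otherwise scales to $1$, giving $z_5$ or $z_6$.
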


\begin{proof}
Let $(C, L)$ be any semistable pair in $S_i$ ($i \in \{ 3, 5, 6, 7, 11\}$). 

When $i=3$, there exists some $g_1 \in {\rm PGL} (3)$ such that $g_1\cdot C = V(x_0 x_1 x_2)$, since $C$ is a $3$-gon. 
We write $g_1 \cdot L = V(b_0 x_0 + b_1 x_1 + b_2 x_2)$. Since $g_1 \cdot L$ is transversal to $g_1\cdot C$, we have $b_0 b_1 b_2 \not = 0$. 
Let $g_2 := {\rm Diag} (b_0, b_1, b_2) \in {\rm PGL} (3)$. 
Then we obtain $g_2 g_1 \cdot (C, L) = z_3$. 

When $i = 5$, there exists some $g_1 \in {\rm PGL} (3)$ such that 
$g_1 \cdot C = g_1 \cdot Q + g_1 \cdot L'$, 
$g_1\cdot L' = V(x_0)$ and $g_1 \cdot L = V(x_2)$. 
We write 
\begin{align*}
g_1 \cdot Q = V \left (a_{20} x_1^2 + a_{11} x_1 x_2 + a_{02} x_2^2 + a_{10} x_0 x_1 + a_{01} x_0 x_2 + a_{00} x_0^2 \right). 
\end{align*}
Since $g_1\cdot L \cap g_1\cdot Q = V (x_2 , a_{20} x_1^2 + a_{10} x_0 x_1 + a_{00} x_0^2)$ is a double point, we have $a_{10}^2 = 4 a_{20} a_{00}$. 
If $a_{20} = 0$, then $a_{10} = 0$, and hence $g_1 \cdot C$ has a double point $(0 : 1 : 0)$. 
Then $g_1 \cdot L$ passes through this point, which is absurd. 
Thus we get $a_{20} \not = 0$. 
Then there exists some $g_2 \in {\rm PGL} (3)$ such that $g_2 g_1 \cdot L' = V(x_0)$, $g_2 g_1 \cdot L = V(x_2)$ and 
$
g_2 g_1 \cdot Q = V \left (x_1^2 + x_2 ( a'_{01} x_0 + a'_{11} x_1)  \right)
$ for some $a'_{01}$ and $a'_{11} \in k$. 
Since $g_2 g_1 \cdot Q$ is irreducible, $a'_{01} \not = 0$. 
Thus we have 
\begin{align*}
g\cdot C = V(x _0 (x_0 x_2 + x_1 (x_1 + a_{11} x_2))) \ \ {\rm and} \ \ g\cdot L = V(x_2), 
\end{align*} 
where $g = g_3 g_2 g_1$ and $g_3 = {\rm Diag} (a_{01}, 1, 1) \in {\rm PGL} (3)$. 
If $a_{11}=0$, then $g \cdot (C, L) = z_7$, and if $a_{11}\not = 0$, then 
$g_4 g \cdot (C, L) = z_5$, where $g_4 = {\rm Diag} (1/a_{11}, 1, a_{11})$. 

When $i=6$ (resp. $i=7$), by similar arguments as above, we obtain  
$$
g\cdot C = V(x _0 (x_0 x_2 + x_1 (b x_0 + x_1))) \ \ {\rm and} \ \ g \cdot L = V(x_2)
$$ 
(resp. $g \cdot (C, L) = z_7$) for some $g \in {\rm PGL} (3)$. 
If $b=0$, then $g \cdot (C, L) = z_7$, 
and if $b \not = 0$, then 
$g' g \cdot (C, L)=z_6$, where $g' = {\rm Diag} (b, 1, 1/b)$. 

When $i=11$, there exists some $g_1 \in {\rm PGL} (3)$ such that $g_1 \cdot C = V(x_0 ^2 x_2 + x_1^3)$, since $C$ is a cuspidal curve. 
We write $g_1 \cdot L = V(b_0 x_0 + b_1 x_1 + b_2 x_2)$. 
Since $g_1 \cdot L$ does not passes through the cusp point $(0 : 0 : 1)$ of $g_1\cdot C$, we may assume $b_2 = 1$. 
Let $g_2 \in {\rm PGL} (3)$ sending $g_2^{-1} : (x_0, x_1, x_2) \mapsto (x_0, x_1, -b_0 x_0 - b_1 x_1 + x_2)$. Then we have 
\begin{eqnarray*}
g_2 g_1 \cdot C = V( x_0^2 x_2 + x_1^3 - b_1 x_0^2 x_1 - b_0 x_0^3 ) 
\end{eqnarray*}
and $g_2 g_1 \cdot L = V(x_2)$. 
Since $g_2 g_1 \cdot L$ is $2$-tangent to $g_2 g_1 \cdot C$, we have $x_1^3 - b_1 x_0^2 x_1 - b_0 x_0^3 = (x_1 - a x_0)^2 (x_1 - b x_0)$ for some $a \not = b$. 
Hence $b_0 = - 2 a^3$, $b_1 = 3 a^2$ and $b = - 2 a$. In particular, $a \not = 0$. 
Let $g_3 \in {\rm PGL} (3)$ sending $g_3^{-1} : (x_0, x_1, x_2) \mapsto (x_0 / (3 a ), x_0 /3 + x_1, 9 a^2 x_2)$ and put $g = g_3 g_2 g_1 \in {\rm PGL} (3)$. 
Then we have $g \cdot (C, L) = z_{11}$
\end{proof}

\begin{Prop} \label{nontrivial identifications}
$\phi(S_3)$, $\phi(S_5)$, $\phi(S_6)$, $\phi(S_7)$ and $\phi(S_{11})$ are single points in $\overline{P}_{1, 3}$ and 
$\phi(S_5)=\phi(S_6)=\phi(S_7) = \phi(S_{11})$. 
\end{Prop}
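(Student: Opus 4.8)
The plan is to use two properties of the good categorical quotient $\phi$. First, $\phi$ is constant on $\mathrm{PGL}(3)$-orbits. Second, if $z$ and $z'$ are both semistable and $z'$ lies in the closure of $\mathrm{PGL}(3)\cdot z$, then $\phi(z)=\phi(z')$. To see the second property, suppose $\lambda$ is a $1$-PS with $\lim_{t\to 0}\lambda(t)\cdot z=z'$; since ${\mathbb P}(S^3V)\times{\mathbb P}(V)$ is complete, $t\mapsto\lambda(t)\cdot z$ extends to a morphism ${\mathbb A}^1\to{\mathbb P}(S^3V)\times{\mathbb P}(V)$ whose value at $0$ is $z'$, and because $z$ is semistable its whole orbit is, so this morphism lands in the open subset $\left({\mathbb P}(S^3V)\times{\mathbb P}(V)\right)^{ss}$ (here we use that $z'$ is semistable). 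Composing with $\phi$ gives a morphism ${\mathbb A}^1\to\overline{P}_{1,3}$ that is constant equal to $\phi(z)$ on the dense open ${\mathbb G}_m$, hence constant; evaluating at $0$ gives $\phi(z')=\phi(z)$.

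With these in hand the easy cases follow from Lemma~\ref{loci}. For $i=3,7,11$ we have $S_i=\mathrm{PGL}(3)\cdot z_i$, so $\phi(S_i)=\{\phi(z_i)\}$ is a single point by the first property. For $i=5,6$ we have $S_i=\mathrm{PGL}(3)\cdot z_i\cup\mathrm{PGL}(3)\cdot z_7$, so $\phi(S_i)=\{\phi(z_i),\phi(z_7)\}$. Hence it remains only to prove the chain $\phi(z_5)=\phi(z_6)=\phi(z_{11})=\phi(z_7)$; granting it, $\phi(S_5)$ and $\phi(S_6)$ are single points and all five loci share the common image $\{\phi(z_7)\}$.

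To obtain these equalities I would degenerate $z_5$, $z_6$ and $z_{11}$ to $z_7$ along the single diagonal $1$-PS $\lambda(t)={\rm Diag}(t,1,t^{-1})$. Computing $F(x)\mapsto F(\lambda(t)^{-1}x)$ and noting that $S=x_2\mapsto tx_2$ still defines the line $V(x_2)$, one finds, up to the scalar $t$ on the cubic form,
\begin{align*}
\lambda(t)\cdot z_5 &= \bigl(V(x_0^2x_2+x_0x_1^2+t\,x_0x_1x_2),\,V(x_2)\bigr),\\
\lambda(t)\cdot z_{11} &= \bigl(V(x_0^2x_2+x_0x_1^2+t\,x_1^3),\,V(x_2)\bigr),\\
\lambda(t)\cdot z_6 &= \bigl(V(x_0^2x_2+x_0x_1^2+t^{-1}x_0^2x_1),\,V(x_2)\bigr).
\end{align*}
Letting $t\to 0$ in the first two lines and $t\to\infty$ in the third, in every case the cubic form specializes to $x_0(x_0x_2+x_1^2)$ while the line stays $V(x_2)$, so all three limits equal $z_7$. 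Since $z_7\in S_7$ is semistable, the second property above yields $\phi(z_5)=\phi(z_6)=\phi(z_{11})=\phi(z_7)$, completing the proof.

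The step needing care is choosing the $1$-PS's so that the limit lies again in the semistable locus: for example, running $\lambda$ the other way on $z_6$ degenerates its cubic to the nonreduced curve $V(x_0^2x_1)$, which is unstable by Proposition~\ref{list of unstable pairs}, and then $\phi$ is not even defined at the limit, breaking the continuity argument. Lemma~\ref{Rijk} identifies precisely which monomials can survive for a given weight vector, and matching the surviving monomials of $z_5,z_6,z_{11}$ with those of $z_7$ is what pins down the admissible choices above; I expect this bookkeeping to be the only real obstacle.
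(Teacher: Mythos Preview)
Your proof is correct and essentially identical to the paper's: the one-parameter subgroup $\lambda(t)=\mathrm{Diag}(t,1,t^{-1})$ that you use coincides in $\mathrm{PGL}(3)$ with the paper's $g_a=\mathrm{Diag}(a,1,1/a)$ and $h_a=\mathrm{Diag}(1,1/a,1/a^2)$ (the latter equals $g_a$ after scaling by $a$), and the resulting degenerations to $z_7$ are the same computations. The only cosmetic difference is that you give a direct continuity argument for $\phi(z)=\phi(z')$ when $z'$ is a semistable limit, whereas the paper invokes the standard fact (Newstead, Theorem~3.14(iii)) that two semistable points have the same image iff their orbit closures meet in the semistable locus.
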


\begin{proof}
By Lemma \ref{loci}, $\phi(S_3)$, $\phi(S_7)$ and $\phi(S_{11})$ are single points clearly. 
We prove $\phi(S_i) = \phi(S_7)$ ($i=5$, $6$, $11$). 
For it, by Lemma \ref{loci}, we have to show $\phi(z_i) = \phi(z_7)$ for each $i=5$, $6$, $11$, respectively. 
It is equivalent to 
\begin{eqnarray} \label{i=7}
\overline{{\rm PGL} (3) \cdot z_i} \cap \overline{{\rm PGL} (3) \cdot z_7 } \cap ({\mathbb P} (S^3 V) \times {\mathbb P} (V))^{ss} \not = \emptyset
\end{eqnarray}
by Theorem 3.14, (iii) in \cite{Newstead78}. 
For any $ a \in k ^{\times}$, 
let $g_a = {\rm Diag} (a,1,1/a) \in {\rm PGL} (3)$ and 
$h_a  = {\rm Diag} (1, 1/a, 1/a^2) \in {\rm PGL} (3)$. 
Then we have 
\begin{align*}
g_a \cdot z_5 &= \left( V( x_0 (x_0 x_2 + x_1 (x_1 + a x_2 ) )) , V( x_2) \right) \in ({\mathbb P} (S^3 V) \times {\mathbb P} (V))^{ss}, \\
g_a^{-1} \cdot z_6 &= \left( V( x_0 (x_0 x_2 + x_1 (a x_0 + x_1 ) )) , V( x_2) \right) \in ({\mathbb P} (S^3 V) \times {\mathbb P} (V))^{ss}, \\
h_a \cdot z_{11} &= \left( V( x_0^2 x_2 + x_1^2 (x_0 + a x_1 ) )) , V( x_2) \right) \in ({\mathbb P} (S^3 V) \times {\mathbb P} (V))^{ss}. 
\end{align*}
Hence 
$\underset{a \rightarrow 0}{\lim} \ g_a \cdot z_5 
= \underset{a \rightarrow 0}{\lim} \ g_a^{-1} \cdot z_6
= \underset{a \rightarrow 0}{\lim} \ h_a \cdot z_{11} = z_7$, 
and hence we obtain (\ref{i=7}) for each $i=5$, $6$, $11$, respectively. 
\end{proof}

\begin{Lem} \label{Cusp}
We define $W_{\rm Cusp}:=\phi( S_{10}) \cup \phi(S_{11} ) \subset \overline{P}_{1, 3}$. 
Then $W_{\rm Cusp} \simeq {\mathbb P}^1$ in $\overline{P}_{1,3}$, which is covered with two affine subsets
\begin{align*}
U^{\rm Cusp}_1&=\{ (C,L) \mid C : x_0 x_2^2 = x_1^3 , L : x_0 = b x_1 + x_2 \} \simeq {\rm Spec} \ k[b^3],\\
U^{\rm Cusp}_2&=\{ (C,L) \mid C : x_0 x_2^2 = x_1^3 , L : x_0 = x_1 + c x_2 \} \simeq {\rm Spec} \ k[c^2],
\end{align*}
where $b^3$ is identified with $1/c^2$.
\end{Lem}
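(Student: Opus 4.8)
The plan is to exhibit explicit normal forms for the pairs in $S_{10}$ and $S_{11}$, read off which orbits they fall into, and match these with the two claimed affine charts. First I would recall that any irreducible cuspidal cubic is projectively equivalent to $V(x_0 x_2^2 - x_1^3)$, whose unique singular point (the cusp) is $(0:0:1)$ and whose unique smooth inflection-free structure is well understood; its smooth locus is a single $\mathbb{G}_a$-orbit under the stabilizer. So for a pair $(C,L)\in S_{10}\cup S_{11}$ I may assume $C = V(x_0 x_2^2 - x_1^3)$ and $L$ does not pass through $(0:0:1)$, i.e. $L = V(x_0 - b_1 x_1 - b_2 x_2)$ for suitable $b_1, b_2$. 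The stabilizer $G$ of $C$ in $\mathrm{PGL}(3)$ is a one-dimensional group (an extension of $\mathbb{G}_m$ by $\mathbb{G}_a$, acting on the affine cuspidal cubic $y^2 = x^3$ by $(x,y)\mapsto(t^2 x, t^3 y)$ together with the translations), and I would compute its action on the coefficients $(b_1, b_2)$ of $L$ explicitly. The orbit space of $G$ acting on the space of admissible lines, compactified by adding $S_{11}$ (the $2$-tangent locus, where $L$ meets $C$ in a double point plus one simple point at smooth points), is exactly $W_{\mathrm{Cusp}}$; I need to show it is $\mathbb{P}^1$.

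Next I would set up the two charts. Using the $\mathbb{G}_m$-part of $G$ to normalize, a generic line can be brought to $L = V(x_0 - b x_1 - x_2)$ (when the $x_2$-coefficient is nonzero), and the residual stabilizer — the finite subgroup of $G$ preserving this normal form — acts on the parameter $b$, the quotient being $\mathrm{Spec}\,k[b^3]$ (the cube reflecting a cyclic group of order $3$, coming from the third roots of unity compatible with the weights $2,3$ on the cuspidal cubic). Similarly, when the line can instead be scaled to $L = V(x_0 - x_1 - c x_2)$, the residual finite stabilizer acts on $c$ with quotient $\mathrm{Spec}\,k[c^2]$. On the overlap both normal forms are available; comparing them via the $\mathbb{G}_m$-scaling that interchanges them shows $b$ and $c$ are related by a scalar, whence $b^3$ and $1/c^2$ agree (up to the normalization one fixes once and for all). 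Gluing $\mathrm{Spec}\,k[b^3]$ and $\mathrm{Spec}\,k[c^2]$ along $b^3 = 1/c^2 \neq 0$ produces $\mathbb{P}^1$, and the two special fibers $b^3 = 0$ (resp. $c^2 = 0$) are the semistable pairs where $L$ degenerates to being tangent, i.e. the point $\phi(S_{11})$; since by Proposition \ref{nontrivial identifications} $\phi(S_{11})$ is a single point, the glued curve has exactly the right two "ends" identified correctly and $W_{\mathrm{Cusp}} \simeq \mathbb{P}^1$ follows.

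I would then verify that $\phi$ restricted to this locus is injective on the relevant orbits — i.e. that distinct values of the invariant $b^3$ (equivalently $1/c^2$) give non-equivalent pairs — which amounts to checking that no further element of $\mathrm{PGL}(3)$ identifies them, and that the only closed-orbit degeneration within the semistable locus is to $\phi(S_{11})$; this last point uses the classification in Table \ref{sspair} together with the identifications already established, so that $S_{10} \cup S_{11}$ is $\phi^{-1}$ of a closed curve in $\overline{P}_{1,3}$. The main obstacle I anticipate is the bookkeeping for the residual finite stabilizers: one must pin down precisely which roots of unity survive after each $\mathbb{G}_m$-normalization (giving the exponents $3$ and $2$ rather than some other integers), and check that the transition function on the overlap is genuinely $b^3 \leftrightarrow 1/c^2$ and not merely "$b^3$ is a unit times a power of $c$". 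This is where a careful explicit computation of the $G$-action on the coefficients of $L$ — respecting the weights $(2,3)$ intrinsic to the cusp — is essential, and it is the step I would write out in full detail.
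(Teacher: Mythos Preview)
Your overall strategy is the same as the paper's: normalize $C$ to $x_0 x_2^2 = x_1^3$, compute the stabilizer of $C$ in $\mathrm{PGL}(3)$, and identify $W_{\mathrm{Cusp}}$ with the quotient of the space of admissible lines by this stabilizer. The paper phrases the result as the weighted projective line $\mathbb{P}(2,3)$, which immediately gives the two charts and the gluing $b^3 = 1/c^2$; your direct chart-by-chart computation leads to the same thing. However, there are two concrete errors you should fix.

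First, the stabilizer of $C$ in $\mathrm{PGL}(3)$ is \emph{not} an extension of $\mathbb{G}_m$ by $\mathbb{G}_a$; it is just the one-dimensional torus $\mathbb{G}_m$ acting by $(x_0,x_1,x_2)\mapsto(x_0,\lambda^2 x_1,\lambda^3 x_2)$. The translations $t\mapsto t+a$ on the smooth locus (parametrized by $(1:t^2:t^3)$) do not extend to linear automorphisms of $\mathbb{P}^2$, because $(t+a)^2$ involves $t$, which is not in the span of $1,t^2,t^3$. Since you only use the $\mathbb{G}_m$-part in your normalization anyway, this error is not fatal, but the description should be corrected. Incidentally, this also means the cusp is at $(1:0:0)$, not $(0:0:1)$.

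Second, and more substantively, your identification of $\phi(S_{11})$ is wrong. The origins $b^3=0$ and $c^2=0$ of the two charts correspond to the lines $x_0=x_2$ and $x_0=x_1$, both of which meet $C$ \emph{transversally} (check: $x_0=x_2$ gives $x_2^3=x_1^3$, three distinct points), so they lie in $\phi(S_{10})$, not $\phi(S_{11})$. These two points are distinct and are \emph{not} identified in the gluing; they are simply the points $[1:0]$ and $[0:1]$ of $\mathbb{P}(2,3)$. The locus $\phi(S_{11})$, where $L$ is $2$-tangent to $C$, is instead the interior point cut out by the discriminant $4b_1^3 - 27 b_2^2 = 0$, i.e.\ $b^3 = 27/4$ (equivalently $c^2 = 4/27$). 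So the picture is: $\phi(S_{10})$ is $\mathbb{P}(2,3)$ minus this single discriminant point, and adding $\phi(S_{11})$ fills in that point to give $W_{\mathrm{Cusp}}\simeq\mathbb{P}(2,3)\simeq\mathbb{P}^1$. Once you correct this, your argument goes through and agrees with the paper's.
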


\begin{proof}
For any stable pair $(C,L)\in S_{10}$, there exists $g \in {\rm PGL} (3)$ such that 
$$g \cdot C : x_0x_2^2=x_1^3, \ g \cdot L : x_0=b_1x_1+b_2x_2 $$
with $(b_1, b_2) \not = (0, 0)$. 
Then the discriminant $D=4b_1^3-27b_2^2$ is nonzero, since $g \cdot L$ is transversal to $g \cdot C$. 
Since any linear automorphism of $g \cdot C$ keeping the cusp stable is of the form 
$$
(x_0,x_1,x_2)\mapsto (x_0, \lambda^2x_1, \lambda^3x_2), 
$$
$\phi(S_{10})$ is the set 
$\{  (b_1, b_2) \mid D \not = 0 \} / \sim $, where the equivalence relation $(b_1, b_2) \sim (c_1, c_2)$ is defined by $(c_1, c_2) = (\lambda ^2 b_1, \lambda ^3 b_2)$ for some $\lambda \in k^{\times}$. 
Thus $\phi(S_{10})$ is isomorphic to the weighted homogeneous space ${\mathbb P} (2,3)$ minus 
a single point defined by the discriminant $D=4b_1^3-27b_2^2=0$. 
This point corresponds to $\phi(S_{11})$. 
In fact, if $4 b_1^3 = 27 b_2^2$, then $g \cdot L$ is $2$-tangent to $g \cdot C$. 
Thus we obtain $W_{\rm Cusp}=\phi(S_{10}) \cup \phi(S_{11}) \simeq  {\mathbb P} (2, 3) \simeq {\mathbb P}^1$, 
which is covered with $U^{\rm Cusp}_1$ and $U^{\rm Cusp}_2$ defined as above. 
\end{proof}

\section{The moduli space $BP_{1,3}$}

This section is mainly due to contributions by Iku Nakamura. 
We give the definition of the moduli space $BP_{1,3}$. 
Its existence is suggested by \cite{Alexeev02}. 
It is the moduli space of pairs $(C, L)$ (with no specified semi-abelian action) consisting of a reduced cubic curve $C$ with at worst nodal singularities and a line $L$ which does not pass through singularities of $C$. 
We construct this moduli as follows: 
Let $W$ be an open subscheme of ${\mathbb P}(S^3 V) \times {\mathbb P}(V)$ consisting of such pairs. 
Then $W$ is invariant under the action of ${\rm PGL} (3)$ on ${\mathbb P}(S^3 V) \times {\mathbb P}(V)$ defined in Section \ref{GIT moduli}. 
By the Keel-Mori theorem (Corollary 1.2 in \cite{KeelMori97}), 
the quotient $W / {\rm PGL} (3)$ exists as a separated algebraic space over $k$, 
if the action of ${\rm PGL} (3)$ is proper and the stabilizer of any pair in $W$ is finite. 
Thus the following Lemma proves that the quotient $W / {\rm PGL} (3)$ exists as a  separated algebraic space, which we denote by $BP_{1,3}$. 
\begin{Lem} \label{key of BP13}
Let $G = {\rm PGL} (3)$. 
\begin{enumerate}[(1)]
\item The action of $G$ is proper, in other words, the morphism 
$\pi : G \times_k W \to W \times_k W$ sending $(g, x) \mapsto (g \cdot x, x)$ is proper. 
\item For any pair $(C, L) \in W$, the stabilizer ${\rm Stab}_{G} (C, L)$ is finite. 
\end{enumerate}
\end{Lem}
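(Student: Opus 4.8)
The plan is to verify (2) and then (1) by hand, using the classification of reduced plane cubics with at worst nodes; (2) is logically independent of (1), and the two together are exactly the input needed for Keel--Mori. For (2): such a curve $C$ is either a smooth cubic, an irreducible cubic with one node, a smooth conic $Q$ together with a secant line $L'$, or three lines in general position. In the first two cases the full group ${\rm Aut}_{\rm lin}(C)$ of linear automorphisms of $C$ is already finite — classically for a smooth cubic, and because ${\rm Pic}^{0}(C)\cong{\mathbb G}_m$ (so that only the translations by $\mu_3\subset{\rm Pic}^0(C)$ preserve ${\mathcal O}_C(1)$ and extend to ${\rm PGL}(3)$) for an irreducible nodal cubic — hence so is ${\rm Stab}_G(C,L)\subseteq{\rm Aut}_{\rm lin}(C)$. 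In the remaining two cases ${\rm Aut}_{\rm lin}(C)$ has positive dimension (a one-dimensional torus for $Q+L'$, sitting inside the copy of ${\rm PGL}(2)$ stabilizing $Q$; a two-dimensional torus for the triangle, extended by $S_3$), so one must use $L$. Since $(C,L)\in W$, the line $L$ misses the nodes of $C$: for $Q+L'$ this forces the two points of $L\cap Q$ to be distinct from the two ${\mathbb G}_m$-fixed points of $Q$, so the subgroup of that ${\mathbb G}_m$ preserving $L$ is finite; for the triangle each point $L\cap L_i$ is a non-vertex point of $L_i$, and an element of the diagonal torus fixing all three of them must be trivial. Thus ${\rm Stab}_G(C,L)$ is finite in every case.

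For (1): since $G={\rm PGL}(3)$ is affine, hence separated over $k$, the $k$-morphism $\pi$ is separated, and it remains to check the valuative criterion of properness. Let $R$ be a discrete valuation ring with fraction field $K$ and uniformizer $t$, let $x_R,y_R$ be $R$-points of $W$, and let $g_K\in{\rm PGL}(3)(K)$ satisfy $g_K\cdot x_K=y_K$; by separatedness of $W$ it suffices to prove $g_K\in{\rm PGL}(3)(R)$. Lifting $g_K$ to ${\rm GL}(3)(K)$ and applying Smith normal form over the principal ideal domain $R$, we may write $g_K=A\,\lambda(t)\,B$ with $A,B\in{\rm GL}(3)(R)$ and $\lambda(t)={\rm Diag}(t^{r_0},t^{r_1},t^{r_2})$, $r_0\ge r_1\ge r_2$; since ${\rm Pic}({\rm Spec}\,R)=0$ the images of $A,B$ lie in ${\rm PGL}(3)(R)$, and $g_K\in{\rm PGL}(3)(R)$ precisely when $r_0=r_1=r_2$. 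As $W$ is ${\rm PGL}(3)$-invariant, replacing $x_R$ by $B\cdot x_R$ and $y_R$ by $A^{-1}\cdot y_R$ — again $R$-points of $W$ — reduces us to showing $r_0=r_1=r_2$ when $g_K=\lambda$. Suppose not, so $r_0>r_2$. Because ${\mathbb P}(S^3V)\times{\mathbb P}(V)$ is projective, the $K$-point $y_K=\lambda(t)\cdot x_K$ extends uniquely to an $R$-point of ${\mathbb P}(S^3V)\times{\mathbb P}(V)$; this extension is $y_R$, so its special fibre $y_0\in W$ equals $\lim_{t\to0}\lambda(t)\cdot x_t$ (and likewise $x_0=\lim_{t\to0}\lambda(t)^{-1}\cdot y_t$).

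The crux — and the step I expect to be the main obstacle — is the assertion that $\lim_{t\to0}\lambda(t)\cdot x_t$ can never lie in $W$ when $x_0\in W$ and $\lambda$ is nontrivial, which gives the required contradiction. For a constant family $x_t=x_0$ the limit is $\lambda$-fixed, and a direct check shows that a $\lambda$-fixed pair is always non-reduced, or has a triple point or a worse-than-nodal singularity, or has its line through a singularity — never in $W$; but when $x_t$ genuinely varies the limit need not be $\lambda$-fixed, so one must track valuations. Concretely, write $x_t=(V(F_t),V(S_t))$ with $F_t,S_t\in R[x_0,x_1,x_2]$ having coefficients not all in the maximal ideal (so $F_0=F_t\bmod t$ and $S_0=S_t\bmod t$ are nonzero); by the definition of the action of $\lambda(t)$, the limiting pair is $(V(F_0'),V(S_0'))$, where $F_0'$ collects the monomials $a_{ij}x_0^{3-i-j}x_1^ix_2^j$ of $F_t$ attaining $\max\big((3-i-j)r_0+ir_1+jr_2-v_t(a_{ij})\big)$ and $S_0'$ collects the monomials $b_kx_k$ of $S_t$ attaining $\max\big(r_k-v_t(b_k)\big)$, with $v_t$ the $t$-adic valuation. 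Running through the possibilities for these maxima using the weight inequalities of Lemma~\ref{Rijk} (which need only $r_0\ge r_1\ge r_2$) — the same bookkeeping as behind Propositions~\ref{list of unstable pairs} and~\ref{ssnots}, now permitting coefficients of positive valuation — one finds in each case that $V(F_0')$ is non-reduced, or has a triple point or a singularity worse than a node, or that $V(S_0')$ passes through a singularity of $V(F_0')$; any of these contradicts $(V(F_0'),V(S_0'))\in W$. This completes the valuative criterion, hence (1), and together with (2) the Keel--Mori theorem then produces $BP_{1,3}=W/{\rm PGL}(3)$ as a separated algebraic space.
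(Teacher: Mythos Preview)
Your treatment of (2) is correct and matches the paper's: both run through the four isomorphism types of at-worst-nodal plane cubics and check finiteness of the stabilizer by hand, using the line $L$ only in the two reducible cases where ${\rm Aut}_{\rm lin}(C)$ is positive-dimensional.

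For (1) your route is genuinely different from the paper's, and the comparison is worth spelling out. The paper does \emph{not} attack the valuative criterion combinatorially. Instead it proves a graph-closure criterion for properness (Lemma~\ref{cha. of properness}), reduces to families whose generic fibre is a \emph{smooth} cubic, and then invokes the minimal-model theorem for arithmetic surfaces: the isomorphism $(Y_K,D_K)\simeq(X_K,C_K)$ extends canonically to the minimal regular models $Y^{\sharp}\simeq X^{\sharp}$, and because $C^{\sharp}$ and $D^{\sharp}$ are relatively very ample (the line misses the singularities, so $C\simeq C^{\sharp}$), the contractions $X^{\sharp}\to X$ and $Y^{\sharp}\to Y$ are recovered from these divisors and the isomorphism descends. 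This is conceptual and entirely avoids any weight bookkeeping.

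Your approach---Cartan/Smith decomposition $g_K=A\,\lambda(t)\,B$ followed by a limit analysis---is the standard ``Hilbert--Mumford for properness'' manoeuvre and is structurally sound. The reduction to diagonal $\lambda$ is correct. But the step you yourself flag as the crux is not carried out: you assert that whenever $x_0\in W$ and $\lambda$ is nontrivial, the limit $\lim_{t\to 0}\lambda(t)\cdot x_t$ lies outside $W$, and say this follows by ``running through the possibilities'' with Lemma~\ref{Rijk}. That claim is very plausibly true, but it is not the same computation as the (in)stability analysis in \S2---the locus $W$ is neither contained in nor contains the GIT-semistable locus (a smooth cubic with a flex tangent lies in $W$ but is GIT-unstable; a cuspidal cubic with a transversal line is GIT-stable but not in $W$)---so the case analysis must be redone from scratch, separating the half-spaces $\{w_{ij}\le M\}$, $\{w_{ij}\ge M\}$ for the cubic and $\{r_k\le N\}$, $\{r_k\ge N\}$ for the line, and checking that in every case one of the two special fibres either has a non-nodal singularity or has its line through a node. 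This can be done, but it is a genuine piece of work you have not supplied; until it is, your argument for (1) is a strategy rather than a proof. The paper's minimal-model argument buys you exactly the avoidance of this case analysis.
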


\subsection{Proof of (1) in Lemma \ref{key of BP13}}

We prove (1) in Steps 1-3. 

{\bf Step 1.} 
We prove the following lemma. 

\begin{Lem} \label{cha. of properness}
Let $W$ be an algebraic variety of finite type over $k$, 
$G$ be an algebraic group variety over $k$ acting on $W$, and 
$H$ be a complete variety over $k$ which contains $G$ as a Zariski open dense subset. 
Let $\pi : G \times _k W \to W \times_k W$ be a morphism sending 
$(g, v) \mapsto (g \cdot v, v)$, 
$\Delta$ be the graph of $\pi$, 
and $\overline{\Delta}$ be the closure of $\Delta$ in 
$H \times_k W \times_k W \times _k W$ with reduced structure. 
Then $\pi$ is proper if and only if $\Delta = \overline{\Delta}$. 
\end{Lem}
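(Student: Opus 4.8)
The plan is to identify the source $G\times_k W$ of $\pi$ with $\Delta$ and to exhibit $\pi$ as the restriction to $\Delta$ of a morphism that is proper for free; both directions of the equivalence then drop out of standard permanence properties of proper morphisms. First I would check that
\[
 j\colon G\times_k W \longrightarrow H\times_k W\times_k W\times_k W,\qquad (g,v)\longmapsto (g,\,v,\,g\cdot v,\,v),
\]
is a locally closed immersion: it factors as the graph morphism $\Gamma_\pi\colon G\times_k W\to (G\times_k W)\times_k(W\times_k W)$ of $\pi$, which is a closed immersion because $W\times_k W$ is separated over $k$, followed by the open immersion $(G\times_k W)\times_k(W\times_k W)\hookrightarrow (H\times_k W)\times_k(W\times_k W)$ induced by $G\hookrightarrow H$. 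Hence $\Delta=j(G\times_k W)$ is locally closed in $H\times_k W\times_k W\times_k W$, so $\Delta$ is open in its closure $\overline{\Delta}$.

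Next I would introduce the closed subvariety $Z:=\{(h,v,w_1,w_2)\mid w_2=v\}\cong H\times_k W\times_k W$ of $H\times_k W\times_k W\times_k W$. Since $Z$ is closed and contains $\Delta$, it contains $\overline{\Delta}$, so $\overline{\Delta}$ is closed in $Z$. Let $\rho\colon H\times_k W\times_k W\times_k W\to W\times_k W$ be the projection $(h,v,w_1,w_2)\mapsto(w_1,v)$. Up to a permutation of factors, $\rho|_Z\colon Z\to W\times_k W$ is the base change of the structure morphism $H\to{\rm Spec}\,k$ along $W\times_k W\to{\rm Spec}\,k$; since $H$ is complete, $\rho|_Z$ is proper, hence so is the composite $\rho|_{\overline{\Delta}}\colon\overline{\Delta}\hookrightarrow Z\to W\times_k W$. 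Finally, transporting along the isomorphism $\Delta\cong G\times_k W$ given by $j$, one sees immediately that $\rho|_{\Delta}=\pi$.

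The equivalence is then formal. If $\Delta=\overline{\Delta}$, then $\pi=\rho|_{\overline{\Delta}}$ is proper. Conversely, assume $\pi$ is proper and write $\pi=\rho|_{\overline{\Delta}}\circ\iota$ with $\iota\colon\Delta\hookrightarrow\overline{\Delta}$ the inclusion. Because $\rho|_{\overline{\Delta}}$ is separated, the cancellation property of proper morphisms (if $h\circ f$ is proper and $h$ is separated, then $f$ is proper) shows $\iota$ is proper, hence closed; therefore its image $\Delta$ is closed in $\overline{\Delta}$. A subset of $\overline{\Delta}$ that is at once closed and dense (the latter by the definition of the closure) is the whole space, so $\Delta=\overline{\Delta}$.

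I expect the only point needing care to be the bookkeeping of the several copies of $W$: one must keep them straight so that $\rho|_{\Delta}$ really is $\pi$ and so that $\rho|_Z$ really is a base change of $H\to{\rm Spec}\,k$ — this is exactly where completeness of $H$ itself is used, and why one cannot simply project onto the last two factors of $H\times_k W\times_k W\times_k W$ (the scheme $H\times_k W$ need not be complete). Everything else invoked is standard: a closed immersion is proper, composites of proper morphisms are proper, proper morphisms are closed, and the cancellation property quoted above.
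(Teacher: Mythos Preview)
Your argument is correct and cleaner than the paper's, but the route is genuinely different. The paper proves both directions by the valuative criterion: for the ``if'' part, given a test DVR $R$ and a $K$-point of $\Delta$ lying over an $R$-point of $W\times_k W$, it extends $g\in G(K)$ to $\tilde g\in H(R)$ (using completeness of $H$), observes that the resulting $R$-point lies in $\overline{\Delta}=\Delta$, and reads off that $\tilde g\in G(R)$; for the ``only if'' part, it picks a closed point $x\in\overline{\Delta}\setminus\Delta$, chooses a CDVR-curve approaching $x$ through $\Delta$, and derives a contradiction from the existence part of the valuative criterion. Your proof instead packages the completeness of $H$ once and for all into the observation that $\rho|_Z$ is the base change of $H\to{\rm Spec}\,k$, hence proper, and then deduces everything from stability of properness under closed immersions, composition, and cancellation. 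What you gain is that no explicit DVR computations are needed and the logic is entirely formal; what the paper's approach buys is that the same CDVR framework is reused verbatim in the subsequent Step~3 of the properness proof for the specific $W$ at hand, so the author likely preferred to keep the method uniform. Your careful remark about the bookkeeping of the copies of $W$ (restricting to $Z$ before projecting, so that the map really is a base change of $H\to{\rm Spec}\,k$) is exactly the point that needs attention, and you handle it correctly.
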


\begin{proof}
Let 
\begin{gather*}
p_{1, 2} \ ({\rm resp.} \ p_{3, 4}) : H \times_k W \times_k W \times _k W \longrightarrow H \times_k W \ ({\rm resp.} \  W \times_k W )
\end{gather*}
be the projection to the $(1, 2)$ (resp. $(3,4)$)-component. 

First we prove the only if part. 
Assume that $\pi$ is proper and $\Delta \ne \overline{\Delta}$. 
Let $x \in \overline{\Delta} (k) \setminus \Delta (k)$. 
Since $\Delta$ is dense in $\overline{\Delta}$, we can choose a CDVR (complete discrete valuation ring) $R$ and a morphism $\alpha : {\rm Spec} \ R \to \overline{\Delta}$ 
such that 
\begin{align*}
\alpha ({\rm Spec} \ k (0)) = x \ \ \mbox{and} \ \ \alpha ({\rm Spec} \ K) \subset \Delta, 
\end{align*}
where $k(0)$ is the residue field of $R$ and $K$ is the fraction field of $R$. 
Let $f := p_{3,4} \circ \alpha$ and $h := p_{1, 2} \circ \alpha$. 
Since $\alpha ({\rm Spec} \ K) \subset \Delta$, we have 
\begin{align*}
h_K : {\rm Spec} \ K \longrightarrow G \times_k W \ \ 
\mbox{and} \ \ \pi \circ h_K = f_K. 
\end{align*}
Since $\pi$ is proper, by the valuative criterion of properness (\cite{EGA}, II, (7.3.8)), 
there exists a morphism 
$
\phi : {\rm Spec} \ R \rightarrow G \times_k W
$ 
such that $\phi_K = h_K$ and $\pi \circ \phi = f$. 
It follows $h = \phi$, so $h ({\rm Spec} \ R) = \phi ({\rm Spec} \ R) \subset G \times_k W$ 
and $\pi \circ h = f$. 
This shows $\alpha ({\rm Spec} \ R) \subset \Delta$. 
This contradicts $\alpha ({\rm Spec} \ k (0)) = x \not \in \Delta (k)$. 
Hence $\Delta = \overline{\Delta}$. 

Next we prove the if part. 
Let $R$ be any DVR (discrete valuation ring), any morphism 
$f : {\rm Spec} \ R \to W \times_k W$ and any morphism 
$h : {\rm Spec} \ K \to G \times_k W$ such that $\pi \circ h = f_K$. 
Then we can write 
$f = (x, y) \in (W \times_k W )(R)$ and $h = (g, y_K) \in (G \times_k W) (K)$ with 
$g \cdot y_K = x_K$. 
Since $g \in G (K)$ extends to $\tilde{g} \in H (R)$, we have 
$(\tilde{g}, y, x, y) \in (H \times_k W \times_k W \times_k W) (R)$ 
and 
$(\tilde{g}_K, y_K, x_K, y_K) = (g, y_K, x_K, y_K) 
= (g, y_K, g \cdot y_K, y_K) \in \Delta (K)$. 
It follows $(\tilde{g}, y, x, y) \in \overline{\Delta} (R) = \Delta (R)$ by the assumption. 
Hence $\tilde{g} \in G (R)$ and 
$\tilde{g} \cdot y = x$, and hence 
$\phi := (\tilde{g}, y) \in (G \times_k W) (R)$ satisfies $\phi_K = h$ and $\pi \circ \phi = f$. 
Therefore by the valuative criterion of properness, $\pi$ is proper. 
\end{proof}

{\bf Step 2.} Let $W^0$ be the subset of $(C, L) \in W$ such that $C$ is smooth. 
Then $W^0$ is invariant under the action of $G$, and hence we can define a morphism 
$\pi^{0} := \pi |_{G \times_k W^0} : G \times _k W^0 \to W^0 \times_k W^0$. 
Let $\Delta^0$ be the graph of the morphism 
$\pi^0$. 
Then it is an open dense subset of $\Delta$, hence of $\overline{\Delta}$. 

\begin{Lem} \label{extension lemma}
For any DVR $R$, a morphism $f : {\rm Spec} \ R \to W \times_k W$ and a morphism 
$h_K : {\rm Spec} \ K \to G \times_k W^0$ such that $\pi \circ h_K = f_K$, 
there exists $\phi : {\rm Spec} \ R \to G \times_k W$ such that $\phi_K = h_K$ and 
$\pi \circ \phi = f$. 
\end{Lem}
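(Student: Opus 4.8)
The plan is to reduce the general case to the case where the cubic curve in the target is also smooth, and then invoke the properness of the action on the smooth locus, which we can control by hand because $\mathrm{PGL}(3)$ acts on smooth plane cubics with finite stabilizers and the GIT picture of Section \ref{GIT moduli} tells us that such pairs are stable. More precisely, write $f = (x,y) \in (W\times_k W)(R)$ with $x = (C_x, L_x)$, $y = (C_y, L_y)$, and $h_K = (g, y_K) \in (G\times_k W^0)(K)$ with $g\cdot y_K = x_K$; in particular $C_y \otimes K$ is a smooth cubic. The first step is to observe that since $h_K$ lands in $G\times_k W^0$, the generic fibre $x_K = g\cdot y_K$ also has smooth cubic, so after possibly shrinking we are comparing two $R$-points of $W$ whose generic fibres are $\mathrm{PGL}(3)$-equivalent smooth pairs. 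What must be ruled out is that the special fibre $x(0)$ or $y(0)$ degenerates in an incompatible way, i.e.\ that $g$ does not extend over ${\rm Spec}\,R$.

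The key step is to pass to the complete variety $H \supset G = \mathrm{PGL}(3)$ used in Lemma \ref{cha. of properness} — concretely $H = {\mathbb P}(\mathrm{End}(V))$ or $\overline{\mathrm{PGL}}(3)$, the closure of $\mathrm{PGL}(3)$ in ${\mathbb P}^8$ — and extend $g \in G(K)$ to $\tilde g \in H(R)$ by the valuative criterion of properness for $H$. Writing $\tilde g(0)$ for the special value, either $\tilde g(0) \in G(k)$, in which case we are done immediately by setting $\phi = (\tilde g, y)$ and checking $\tilde g\cdot y = x$ over all of ${\rm Spec}\,R$ (both sides agree generically and $W\times_k W$ is separated), or $\tilde g(0)$ lies on the boundary $H\setminus G$, so it is represented by a singular $3\times 3$ matrix $A$ of rank $1$ or $2$. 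In the latter case I would derive a contradiction with the hypothesis that $f$ lands in $W$: applying the rational map $A\colon {\mathbb P}^2 \dashrightarrow {\mathbb P}^2$ to the flat family $C_y$ and its line $L_y$, the limit $A_*(C_y(0))$ either acquires a triple point, a non-reduced component, or contains a line through a singular point or the base line — all configurations excluded from $W$ by the defining condition (reduced, at worst nodal, line avoiding singularities). This forces $x(0) = \lim \tilde g\cdot y$ to fall outside $W$, contradicting $f \in (W\times_k W)(R)$. Hence $\tilde g(0)\in G(k)$.

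The main obstacle is the rank-drop analysis: one must show that every boundary point $A$ of $\overline{\mathrm{PGL}}(3)$, applied as a limit to an $R$-family of pairs in $W$, produces a special fibre violating the $W$-conditions. For rank-$1$ matrices $A$ (image a point) the limit cubic is a triple line through that point — clearly non-reduced — so that case is immediate. The genuinely delicate case is rank $2$, where $A$ contracts a line $\ell$ to a point $p$ and restricts to an isomorphism elsewhere: here I expect to argue that the limit cubic contains $p$ with multiplicity equal to the intersection multiplicity of $C_y(0)$ with $\ell$, which is $3$, so either $p$ is a triple point of the limit (excluded) or the limit degenerates so that it contains the exceptional line as a component (non-reduced, excluded); and in the boundary subcase where the line $L_y$ is involved one checks the limit line passes through the limit singular point. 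Organizing this into a clean finite case check — parametrized by the rank of $A$ and the relative position of $\ker A$, its image, and the pair $(C_y, L_y)$ — is the real work; once it is done, Step 3 (deducing $\Delta = \overline\Delta$ hence properness via Lemma \ref{cha. of properness}) is formal.
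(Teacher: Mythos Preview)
Your strategy via the compactification $H\supset{\rm PGL}(3)$ and a rank analysis of $A=\tilde g(0)$ is different from the paper's, and as written it has a real gap. The action on cubics is $g\cdot V(F)=V(F\circ g^{-1})$, so what governs the limit is the extension $\tilde h\in H(R)$ of $g^{-1}$, with special value $B=\tilde h(0)$; the limit cubic equals $V(F_{y,0}\circ B)$ only when $F_{y,0}\circ B\neq 0$. If $B$ has rank~$1$ with image $\langle v\rangle$ and $[v]\in C_{y,0}$, or if $B$ has rank~$2$ with image a plane $P$ such that the line ${\mathbb P}(P)$ is a component of $C_{y,0}$ (which does occur in $W$: take $C_{y,0}$ a conic plus a transversal line), then $F_{y,0}\circ B=0$ and the limit depends on higher-order terms of both $\tilde h$ and $y$---it is \emph{not} determined by $B$ and $y(0)$ alone. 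Hence a finite case check ``parametrized by the rank of $A$ and the relative position of $\ker A$, its image, and $(C_y,L_y)$'' cannot close the argument. Two smaller points: a rank-$2$ linear map of ${\mathbb P}^2$ does not ``contract a line to a point and restrict to an isomorphism elsewhere''---it has one point of indeterminacy and collapses the complement onto a line; and the appeal to GIT in your first paragraph fails because not every pair in $W^0$ is stable (a smooth cubic with a $2$-tangent line is strictly semistable, with a $3$-tangent it is unstable). The route can probably be salvaged by a Cartan decomposition $g=k_1\,\lambda\,k_2$ with $k_i\in G(R)$ and $\lambda$ a $1$-PS over a finite extension, reducing to an explicit Hilbert--Mumford computation, but that is substantially more than you have outlined.

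The paper's proof avoids all of this. It regards the two $R$-families $X,Y\subset{\mathbb P}^2_R$ of cubics, with degree-$3$ divisors $C=X\cap L_x$ and $D=Y\cap L_y$, as relative curves with smooth generic fibre, passes to their minimal regular models $X^\sharp,Y^\sharp$, and uses uniqueness of minimal models over a DVR to extend the $K$-isomorphism to $\gamma^\sharp:Y^\sharp\to X^\sharp$. The defining condition of $W$---that the line avoids the singularities of the cubic---guarantees that $C,D$ lie in the smooth loci, so their proper transforms $C^\sharp,D^\sharp$ are isomorphic to $C,D$ and correspond under $\gamma^\sharp$. Since $C,D$ are relatively very ample, the linear systems $|C^\sharp|,|D^\sharp|$ recover the contractions $X^\sharp\to X\subset{\mathbb P}^2_R$ and $Y^\sharp\to Y\subset{\mathbb P}^2_R$, and $\gamma^\sharp$ descends to an element of $G(R)$ extending $g$. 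No matrix or rank computation is needed.
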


\begin{proof}
Let $p_i$ be the $i$-th projection of $W \times _k W$. 
Let $({\mathcal C}, {\mathcal L})$ be the universal pair of cubics and lines over $W$. 
Let $S = {\rm Spec } \ R$, and we define 
\begin{gather*} 
X = (p_1 \circ f)^* ({\mathcal C}), \ 
Y = (p_2 \circ f)^* ({\mathcal C}),  \\
C = (p_1 \circ f)^* ({\mathcal C} \cap {\mathcal L}), \ 
D = (p_2 \circ f)^* ({\mathcal C} \cap {\mathcal L}).
\end{gather*}
Then the family of cubic curves $p : X \to S$ (resp. $q : Y \to S$) 
is proper flat over $S$ with an effective divisor $C$ (resp. $D$) of degree three 
flat over $S$. 
Note that $X \subset {\mathbb P}^2_R$ and $Y \subset {\mathbb P}^2_R$. 
Since $\pi \circ h_K = f_K$, there exists $g \in G (K)$ such that 
\begin{align*}
X_K = g (Y_K) , \ \ C_K = g (D_K). 
\end{align*} 
In particular, 
we have an isomorphism as pairs of $K$-schemes: 
\begin{align*}
\gamma ' : (Y_K, D_K) \longrightarrow (X_K, C_K). 
\end{align*}

To prove the existence of $\phi$, 
it suffices to show that 
there exists an isomorphism $\gamma : (Y, D) \to (X, C)$ as pairs of $R$-schemes such that $\gamma _K = \gamma'$. 
In fact, 
since $\gamma$ is induced from $G (R)$, 
there exists $\tilde{g} \in G(R)$ such that $\tilde{g}_K = g$, $X = \tilde{g} (Y)$ and 
$C = \tilde{g} (D)$. 
Then $\phi := (\tilde{g}, p_2 \circ f) \in (G \times_k W) (R)$ satisfies desired properties. 
In the following, we construct such isomorphism $\gamma$. 

Since $p_1 \circ f_K ({\rm Spec} \ K)
= p_1 \left( \pi \circ h_K ({\rm Spec} \ K) \right) \subset W^0$, 
$X_K$ is smooth over $K$, 
that is, $X_K$ is a family of smooth cubic curves over ${\rm Spec} \ K$, 
hence so is $Y_K$. 
Therefore by the minimal models theorem 
(see \cite{Lichtenbaum68} or \cite{Shafarevitch66}), 
there exists the minimal proper regular model $X^{\sharp}$ (resp. $Y^{\sharp}$) 
for $X_K$ (resp. $Y_K$). 
In fact, $X^{\sharp}$ (resp. $Y^{\sharp}$) 
is obtained by resolving the singularities of $X$ (resp. $Y$). 
Let $\nu : X^{\sharp} \to X$ (resp. $\mu : Y^{\sharp} \to Y$) be the minimal resolution. 
By the uniqueness of minimal models, 
the isomorphism $\gamma'$ extends to an isomorphism 
\begin{align*}
\gamma^{\sharp} : Y^{\sharp} \longrightarrow X^{\sharp}
\end{align*}
as $R$-schemes. 
Let $C^{\sharp}$ (resp. $D^{\sharp}$) be the proper transform of $C$ (resp. $D$) 
by $\nu$ (resp. $\mu$). 
Since $(\gamma ')^* (C_K) = D_K$ and $\gamma^{\sharp}_K = \gamma '$, 
we have $(\gamma ^{\sharp})^* (C^{\sharp}) = D^{\sharp}$, which is the closure of $D_K$. 
Thus $\gamma ^{\sharp}$ is an isomorphism of pairs 
\begin{align*}
\gamma ^{\sharp} : (Y^{\sharp}, D^{\sharp}) \longrightarrow (X^{\sharp}, C^{\sharp}). 
\end{align*}
Recall that the singularities of $X$ (resp. $Y$) are not contained in $C$ (resp. $D$). Hence 
we obtain $C^{\sharp} \simeq C$ and $D^{\sharp} \simeq D$. 
Moreover $C$ (resp. $D$) is relatively very ample on $X$ (resp. $Y$). 
Therefore the contraction $\nu : X^{\sharp} \to X$ (resp. $\mu : Y^{\sharp} \to Y$) 
is induced from the morphism 
\begin{gather*}
N : X^{\sharp} \longrightarrow 
{\mathbb P} \left( (p^{\sharp})_* (O _{X^{\sharp}} (C^{\sharp}) ) \right) 
\simeq {\mathbb P}^2_R \\ 
({\rm resp.} \ 
M : Y^{\sharp} \longrightarrow 
{\mathbb P} \left( (q^{\sharp})_* (O _{Y^{\sharp}} (D^{\sharp}) ) \right) 
\simeq {\mathbb P}^2_R ), 
\end{gather*}
where $p^{\sharp} := p \circ \nu : X^{\sharp} \to S$ (resp. 
$q^{\sharp} := q \circ \mu : Y^{\sharp} \to S$). 
Then we have $X = N (X^{\sharp})$ and $Y = M (Y^{\sharp})$, and 
the isomorphism $\gamma ^{\sharp} : (Y^{\sharp}, D^{\sharp}) \rightarrow (X^{\sharp}, C^{\sharp})$ 
induces an isomorphism 
\begin{align*}
(\gamma^{\sharp})^{*} : (p^{\sharp})_* (O _{X^{\sharp}} (C^{\sharp}) ) \longrightarrow 
(q^{\sharp})_* (O _{Y^{\sharp}} (D^{\sharp}) ), 
\end{align*}
and a commutative diagram 
\[\xymatrix{
Y^{\sharp} \ar[d]_{\simeq}^{\gamma^{\sharp}} \ar[r]^M 
& Y \ar[d]^{\gamma}_{\simeq} \ar@{}[r]|\subset & {\mathbb P}^2_R 
\ar[d]_{\simeq}^{{\mathbb P} ((\gamma^{\sharp})^*)}  \\
X^{\sharp} \ar[r]_{N} & X \ar@{}[r]|\subset & {\mathbb P}^2_R
}\]
Since $M (D^{\sharp}) = D$ and $N (C^{\sharp}) = C$, 
we have an isomorphism $\gamma : (Y, D) \to (X, C)$ as pairs of $R$-schemes such that 
$\gamma_K = \gamma^{\sharp}_K = \gamma'$. 
\end{proof}

{\bf Step 3.} We shall prove that $\pi$ is proper.
Suppose that $\pi$ is not proper. 
By Lemma \ref{cha. of properness}, 
there exists a point $x \in \overline{\Delta} (k) \setminus \Delta (k)$. 
Since $\Delta^0$ is open dense in $\overline{\Delta}$, we can choose a CDVR 
$R$ and a morphism $\alpha : {\rm Spec} \ R \to \overline{\Delta}$ 
such that 
\begin{align*}
\alpha ({\rm Spec} \ k (0)) = x \ \ \mbox{and} \ \ \alpha ({\rm Spec} \ K) \subset \Delta^0. 
\end{align*}
Let 
$f := p_{3, 4} \circ \alpha$ and $h := p_{1, 2} \circ \alpha$. 
Since $\alpha ({\rm Spec} \ K) \subset \Delta^0$, 
we have $h_K : {\rm Spec} \ K \to G \times_k W^0$ and 
$f_K = \pi \circ h_K$. 
By Lemma \ref{extension lemma}, 
there exists a morphism $\phi : {\rm Spec} \ R \to G \times _k W$ such that 
$\phi_K = h_K$ and $\pi \circ \phi = f$. 
This contradicts $\alpha ({\rm Spec} \ k (0)) = x \in \overline{\Delta} (k) \setminus \Delta (k)$ as we saw in the proof of only if part of Lemma \ref{cha. of properness}. 
It follows that $\pi$ is proper. 

\subsection{Proof of (2) in Lemma \ref{key of BP13}}

Let $(C, L)$ be any pair in $W$. 
Then the cubic curve $C$ is 
\begin{enumerate}[(i)]
\item an elliptic curve, 
\item a nodal curve, 
\item a $3$-gon, or 
\item an irreducible conic plus a line which does not tangent to the conic. 
\end{enumerate}
When (i), we may assume that $C = V(x_0^3 + x_1^3 + x_2^3 - 3 \mu x_0 x_1 x_2)$ for some 
$\mu \in k$ with $\mu^3 \neq 1$. 
The stabilizer ${\rm Stab}_{G} (C, L)$ is contained in the Hesse group 
$G_{216}$, which is of order $216$. 
When (ii), we may assume that $C = V(x_0^3 + x_1^3 - 3 x_0 x_1 x_2)$. 
Then ${\rm Stab}_{G} (C, L)$ is contained in ${\rm Stab}_{G} (C)$, 
and ${\rm Stab}_{G} (C)$ is generated by 
\begin{align*}
g_1 (\zeta_3) := \left[ 
\begin{array}{ccc}
1 & 0 & 0 \\
0 & \zeta_3^2 & 0 \\
0 & 0 & \zeta_3
\end{array}
\right], 
g_2 := \left[ 
\begin{array}{ccc}
0 & 1 & 0 \\
1 & 0 & 0 \\
0 & 0 & 1
\end{array}
\right], 
\end{align*}
where $\zeta_3$ is a primitive third root of unity. 
When (iii), since $L$ does not pass through singularities of $C$, 
we may assume that 
$(C, L) = (V (x_0 x_1 x_2), V (x_0 + x_1 + x_2) )$. 
Hence ${\rm Stab}_{G} (C, L)$ is permutations of coordinates $x_0$, $x_1$ and $x_2$. 
When (iv), 
since $L$ does not pass through singularities of $C$, 
we may assume that 
$(C, L) = (V (x_2 (x_2^2 + x_0 x_1)), V (x_0 + x_1 + a x_2) )$ for some $a \in k$. 
Then ${\rm Stab}_{G} (C)$ is generated by $g_1(\alpha)$ ($\alpha \in k^{\times}$) and $g_2$. 
Hence if $a \not = 0$, then ${\rm Stab}_{G} (C, L)$ is generated by $g_2$, and 
if $a = 0$, then ${\rm Stab}_{G} (C, L)$ is generated by $g_1 (-1)$ and $g_2$. 

In any case, ${\rm Stab}_{G} (C, L)$ is finite. 

\section{A comparison of $\overline{P}_{1,3}$ with ${BP}_{1,3}$} \label{comparison}

We give a birational map from $\overline{P}_{1,3}$ to $BP_{1,3}$. 
Let $W_{\rm T} \subset BP_{1, 3}$ be the subset of pairs $(C, L)$ consisting of a cubic curve $C$ and a line $L$ which is $3$-tangent to $C$, 
and let $W^{(sm)}_{\rm T}$ (resp. $W^{(node)}_{\rm T}$) be the subset 
$(C, L) \in W_T$ such that $C$ is smooth (resp. nodal). 
We show that $W_{\rm T}$ is isomorphic to ${\mathbb P}^1$ as follows: 
We have 
$
W^{(sm)}_{\rm T} = \left\{ z(\mu) \mid \mu \in k , \ \mu^3 \not = 1 \right\}/ G_{216}, 
$
where $G_{216}$ is the Hesse group and 
\begin{align*}
z(\mu) := \left( V (x_0^3 + x_1^3 + x_2^3 - 3 \mu x_0x_1x_2), V (\mu x_0 + x_1 + x_2) \right). 
\end{align*}
Thus $W^{(sm)}_{\rm T}$ is isomorphic to $k$ via the $j$-invariant, that is, 
$z(\mu) \mapsto j (\mu) = \frac{\mu^3 (\mu^3 + 8)^3}{(\mu^3 - 1)^3}$. 
On the other hand, since $\mu \not = 1$, in $BP_{1,3}$ we have 
\begin{align*}
z (\mu) 
&= z \left( \frac{\mu+2}{\mu-1} \right) \\
&= 
\left( V \left( \left(\frac{\mu-1}{\mu+2} \right)^3 x_0^3 + x_1^3 + x_2^3 - 3 x_0x_1x_2 \right), 
V (x_0 + x_1 + x_2) \right), 
\end{align*} 
and hence 
$\displaystyle \lim_{\mu\to 1} z (\mu) = \left( V (x_1^3 + x_2^3 - 3 x_0x_1x_2), V (x_0 + x_1 + x_2) \right) \in W^{(node)}_{\rm T}$. 
Since $W^{(node)}_{\rm T}$ is the single point, 
$W_{\rm T} = W^{(sm)}_{\rm T} \cup W^{(node)}_{\rm T} \simeq \mathbb{P}^1$. 

Recall that $W_{\rm Cusp} \subset \overline{P}_{1,3}$ is the subset of semistable pairs $(C, L)$ consisting of a cuspidal cubic $C$ and a line $L$ intersecting at smooth points of $C$, which is not $3$-tangent to $C$. 
By Lemma \ref{Cusp}, $W_{\rm Cusp}$ is isomorphic to ${\mathbb P}^1$. 
We denote by $Y$ (resp. $Z$) the open dense subset $\overline{P}_{1, 3} \setminus W_{\rm Cusp}$ (resp. $BP_{1, 3} \setminus W_{\rm T}$). 
Then the identity map $f : Y \rightarrow Z$, $(C, L) \mapsto (C, L)$ defines a rational map 
$f : \overline{P}_{1, 3} \rightarrow BP_{1, 3}$ 
and the inverse map $f^{-1} : Z \rightarrow Y$ defines the inverse rational map of $f$. Hence 
$f : \overline{P}_{1, 3} \rightarrow BP_{1, 3}$ is a birational map, and the base loci of $f$, $f^{-1}$ are $W_{\rm Cusp}$, $W_{\rm T}$, respectively. 
Let $G(f)$ be the graph of $f : Y \to Z$: 
$$
G(f) := {\rm Im} \left( ({\rm id}, f) : Y \longrightarrow Y \times_k Z \right) \subset \overline{P}_{1, 3} \times_k BP_{1, 3}.  
$$

\begin{Lem}  \label{graph}
Let $\Gamma := W_{\rm Cusp} \times_k W_{\rm T} \simeq  {\mathbb P}^1 \times_k {\mathbb P}^1$ and $p_i$ be the $i$-th projection of $\Gamma$. 
Then for any $x \in \Gamma$, there exists a semistable pair $({\mathcal C}_{t}, {\mathcal L}_{t}) \in Y$ such that 
\begin{align*}
\underset{t \rightarrow 0} \lim \ ({\mathcal C}_{t}, {\mathcal L}_t) = p_1 (x) \ \ {\rm and} \ \ \underset{t \rightarrow 0} \lim \ f ({\mathcal C}_{t}, {\mathcal L}_t) = p_2 (x),
\end{align*}
In particular, we have $\Gamma \subset \overline{G(f)} \setminus G(f)$. 
\end{Lem}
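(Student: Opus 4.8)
The plan is to produce, for each $x=(p_1(x),p_2(x))\in\Gamma$, one explicit one-parameter family of pairs consisting of a smooth cubic and a transversal line, chosen so that its cubic degenerates, as $t\to0$, to a cuspidal cubic --- which forces the limit in $\overline{P}_{1,3}$ into $W_{\rm Cusp}$ --- while after a $t$-dependent rescaling by an element of ${\rm PGL}(3)$ the pair converges, in $BP_{1,3}$, to a smooth (or irreducible nodal) cubic together with its inflectional tangent, which is a point of $W_{\rm T}$ of a prescribed $j$-invariant. By Lemma \ref{Cusp} we may take $p_1(x)=\phi\bigl(V(x_0x_2^2-x_1^3),\,V(x_0-bx_1-x_2)\bigr)$ with $b\in k$ on the chart $U^{\rm Cusp}_1$ (and the analogue with $V(x_0-x_1-cx_2)$ on $U^{\rm Cusp}_2$), while $p_2(x)$ is the point of $W_{\rm T}$ of $j$-invariant $j_0\in k\cup\{\infty\}$, with $j_0=\infty$ corresponding to $W^{(node)}_{\rm T}$. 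Choose $\alpha,\beta\in k$, not both zero, with $4\alpha^3/(4\alpha^3+27\beta^2)=j_0/1728$ when $j_0\ne\infty$ (so $\alpha=0$ gives $j_0=0$ and $\beta=0$ gives $j_0=1728$), and $4\alpha^3+27\beta^2=0$ when $j_0=\infty$; set $\epsilon=0$ in the first case and $\epsilon=1$ in the second, pick a general linear form $\ell(x)$, and put
\[
{\mathcal C}_t:=V\bigl(x_0x_2^2-x_1^3-\alpha t^2x_0^2x_1-\beta t^3x_0^3-\epsilon t^4x_0^3\bigr),\qquad
{\mathcal L}_t:=V\bigl(x_0-bx_1-x_2+t\,\ell(x)\bigr).
\]
A discriminant computation gives ${\mathcal C}_t$ smooth for $0<|t|\ll1$, and for general $\ell$ the line ${\mathcal L}_t$ then meets it transversally, so $({\mathcal C}_t,{\mathcal L}_t)$ is a stable pair (column $1$ of Table \ref{sspair}); a stable pair has image in the stable locus of $\overline{P}_{1,3}$, which meets $W_{\rm Cusp}$ only along $\phi(S_{10})$ --- pairs with a cuspidal cubic --- so $({\mathcal C}_t,{\mathcal L}_t)\in Y$, and $f({\mathcal C}_t,{\mathcal L}_t)=({\mathcal C}_t,{\mathcal L}_t)\in Z$.

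To compute $\lim_{t\to0}$ in $\overline{P}_{1,3}$, I observe that the coefficients of ${\mathcal C}_t$ and ${\mathcal L}_t$ are polynomials in $t$ whose top-degree-in-$x$ coefficients ($1$ for $x_0x_2^2$ and for $x_0$) do not vanish, so $t\mapsto({\mathcal C}_t,{\mathcal L}_t)$ extends to a morphism from the affine line in $t$ into $\bigl({\mathbb P}(S^3V)\times{\mathbb P}(V)\bigr)^{ss}$ whose value at $t=0$ is $\bigl(V(x_0x_2^2-x_1^3),\,V(x_0-bx_1-x_2)\bigr)$, a semistable pair (in $S_{10}$, or in $S_{11}$ when $4b^3=27$). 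Since the good quotient $\phi$ is continuous, $\lim_{t\to0}\phi({\mathcal C}_t,{\mathcal L}_t)=\phi\bigl(V(x_0x_2^2-x_1^3),\,V(x_0-bx_1-x_2)\bigr)=p_1(x)$ by the choice of $b$.

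To compute $\lim_{t\to0}$ in $BP_{1,3}$, I base-change $t=s^2$ and apply $g_s\in{\rm PGL}(3)$ with $g_s^{-1}:(x_0,x_1,x_2)\mapsto(x_0,s^2x_1,s^3x_2)$. A direct substitution gives
\[
g_s\cdot{\mathcal C}_{s^2}=V\bigl(x_0x_2^2-x_1^3-\alpha x_0^2x_1-(\beta+\epsilon s^2)x_0^3\bigr),
\]
while $g_s\cdot{\mathcal L}_{s^2}$ is a line whose coefficient of $x_0$ is $1+s^2(\cdots)$ and whose remaining coefficients are divisible by $s^2$; hence $s\mapsto g_s\cdot({\mathcal C}_{s^2},{\mathcal L}_{s^2})$ extends over $s=0$ to $(C_0,L_0):=\bigl(V(x_0x_2^2-x_1^3-\alpha x_0^2x_1-\beta x_0^3),\,V(x_0)\bigr)$. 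Here $C_0$ is a smooth cubic if $j_0\ne\infty$ and an irreducible nodal cubic, with node in $\{x_0\ne0\}$, if $j_0=\infty$; and $L_0=V(x_0)$ meets $C_0$ only at the inflection point $(0:0:1)$, with multiplicity three. Thus $(C_0,L_0)\in W$ and it represents the point of $W_{\rm T}$ of $j$-invariant $j_0$, namely $p_2(x)$. Since $g_s\in{\rm PGL}(3)$, the classes of $({\mathcal C}_{s^2},{\mathcal L}_{s^2})$ and $g_s\cdot({\mathcal C}_{s^2},{\mathcal L}_{s^2})$ in $BP_{1,3}$ coincide, and $BP_{1,3}$ is complete and separated with $W\to BP_{1,3}$ continuous, so $\lim_{t\to0}f({\mathcal C}_t,{\mathcal L}_t)=\lim_{s\to0}\bigl[g_s\cdot({\mathcal C}_{s^2},{\mathcal L}_{s^2})\bigr]=[C_0,L_0]=p_2(x)$.

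Finally, letting $(\alpha,\beta,b)$ and the chart of $W_{\rm Cusp}$ vary realises every $x\in\Gamma$, giving the required family in each case. For $t\ne0$ the point $\bigl(\phi({\mathcal C}_t,{\mathcal L}_t),\,f({\mathcal C}_t,{\mathcal L}_t)\bigr)$ lies in $G(f)\subset Y\times_kZ$ and tends to $(p_1(x),p_2(x))=x$ in $\overline{P}_{1,3}\times_kBP_{1,3}$, so $x\in\overline{G(f)}$; and $x\notin G(f)$ because $G(f)\subset Y\times_kZ$ whereas $p_1(x)\in W_{\rm Cusp}=\overline{P}_{1,3}\setminus Y$. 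Therefore $\Gamma\subset\overline{G(f)}\setminus G(f)$. I expect the main obstacle to be the limit computation in $BP_{1,3}$: one must justify that this limit is genuinely computed by the rescaled family --- which rests on the completeness and separatedness of $BP_{1,3}$ established in Section~3 --- and check carefully that the rescaled limit $(C_0,L_0)$ indeed lies in $W$ (the line avoiding the node of $C_0$) and that $V(x_0)$ is the triple tangent at the inflection point. A secondary, more routine point is to confirm that $({\mathcal C}_t,{\mathcal L}_t)\in Y$ for $0<|t|\ll1$, i.e.\ that transversality of ${\mathcal L}_t$ to the smooth ${\mathcal C}_t$ holds for general $\ell$.
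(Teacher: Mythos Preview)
Your proof is correct and follows essentially the same route as the paper's: write the cuspidal target as $x_0x_2^2=x_1^3$ with a line $x_0=b_1x_1+b_2x_2$, write the $W_{\rm T}$ target in short Weierstrass form $x_0x_2^2=x_1^3+B_1x_0^2x_1+B_2x_0^3$ with triple tangent $x_0=0$, interpolate by inserting powers of $t$ into the Weierstrass coefficients, and compute the $BP_{1,3}$ limit via the weighted diagonal action $\mathrm{Diag}(1,t^{-2},t^{-3})$. The paper simply takes the family
\[
{\mathcal C}_t:\;x_0x_2^2=x_1^3+B_1t^4x_0^2x_1+B_2t^6x_0^3,\qquad {\mathcal L}_t:\;x_0=b_1x_1+b_2x_2,
\]
so the exponents $t^4,t^6$ make the rescaling work without any base change, and it keeps the line fixed.

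Your extra devices --- the $\epsilon t^4x_0^3$ term to force smoothness in the nodal-limit case, the perturbation $t\,\ell(x)$ of the line, and the base change $t=s^2$ --- are all harmless but unnecessary. In the paper's version ${\mathcal C}_t$ is allowed to be nodal when $4B_1^3+27B_2^2=0$; the pair is still semistable (column $8$ or $9$) and lies in $Y$ because its cubic is not cuspidal, which is all that is needed. Conversely, the paper does not pause to verify that $({\mathcal C}_t,{\mathcal L}_t)\in Y$ for $t\neq0$, whereas your argument via stability and the identification of the stable part of $W_{\rm Cusp}$ with $\phi(S_{10})$ makes this explicit. So your version trades a cleaner family for a more carefully justified membership in $Y$; either way the core idea is the same one-parameter Weierstrass degeneration with a $\mathbb G_m$-rescaling.
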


\begin{proof}
For any $x \in \Gamma$, we have $p_1(x) \in W_{\rm Cusp}$ and $p_2(x) \in W_{\rm T}$. 
Let $p_1(x)= (C, L)$ and $p_2(x)= (C', L')$. 
Since $C$ is a cuspidal curve, $L$ does not pass through the cusp point of $C$, and $L$ is not $3$-tangent to $C$, 
we may assume that 
\begin{align*}
C : x_0 x_2^2 = x_1^3, \ \ L : x_0 = b_1 x_1 + b_2 x_2 
\end{align*}
with $(b_1, b_2) \not = (0, 0)$. 
Since $C'$ is a smooth or nodal curve, and $L'$ is a $3$-tangent at a smooth point of $C'$, 
we may assume that 
\begin{align} \label{C'L'}
C' : x_0 x_2^2 = x_1^3 + B_1 x_0^2 x_1 + B_2 x_0^3, \ \ L' : x_0 = 0 
\end{align}
with $(B_1, B_2) \not = (0, 0)$. 
In fact, assume $L' : x_0 = 0$. Then $C' \cap L'$ is a triple point, hence $C'$ has the form 
$$
C' : a_{02} x_0 x_2^2 + a_{11} x_0 x_1 x_2 + a_{01} x_0^2 x_2 = x_1^3 + a_{20} x_0 x_1^2 + a_{10} x_0^2 x_1 + a_{00} x_0^3 .
$$
If $a_{02} = 0$, then $C'$ has a singular point $(0 : 0 : 1)$ and $L'$ passes through this point, which is absurd. 
Hence $a_{02} \not = 0$. We may assume that $a_{02} = 1$. Thus $C'$ has a Weierstrass form. 
Since characteristic of $k$ is not equal to $2$ and $3$, we may assume that $(C', L')$ has the above form (\ref{C'L'}). 

For $t \not = 0$, we define a pair $({\mathcal C}_t, {\mathcal L}_t)$ as follows: 
\begin{align*}
{\mathcal C}_t : x_0 x_2^2 = x_1^3 + B_1  t^4 x_0^2 x_1 + B_2 t^6 x_0^3, \ \ {\mathcal L}_t : x_0 = b_1 x_1 + b_2 x_2. 
\end{align*}
Let $g = {\rm Diag} \ (1, 1/t^2, 1/t^3) \in {\rm PGL}(3)$.  
Then we obtain 
\begin{align*}
g \cdot {\mathcal C}_t : x_0 x_2^2 = x_1^3 + B_1 x_0^2 x_1 + B_2 x_0^3, \ \ g \cdot {\mathcal L}_t : x_0 = b_1 t^2 x_1 + b_2 t^3 x_2. 
\end{align*}
Therefore we have 
$\underset{t \rightarrow 0} \lim \ ({\mathcal C}_{t}, {\mathcal L}_t) = (C, L) = p_1 (x)$ and 
\begin{align*}
\underset{t \rightarrow 0} \lim \ f({\mathcal C}_{t}, {\mathcal L}_t) = \underset{t \rightarrow 0} \lim \ ({\mathcal C}_{t}, {\mathcal L}_t) = \underset{t \rightarrow 0} \lim \ (g\cdot {\mathcal C}_{t}, g \cdot {\mathcal L}_t) = (C', L') = p_2 (x). 
\end{align*}
\end{proof}

\begin{Th} \label{main1}
Let $p_i$ be the $i$-th projection of $\overline{P}_{1, 3} \times_k BP_{1, 3}$. 
Then $f : \overline{P}_{1, 3} \rightarrow BP_{1, 3}$ is a birational map and 
\begin{align*}
W_{\rm Cusp} \times_k W_{\rm T} = p_1^{-1} \left( W_{\rm Cusp} \right) \cap p_2^{-1} \left( W_{\rm T} \right) = \overline{G(f)} \setminus G(f). 
\end{align*}
\end{Th}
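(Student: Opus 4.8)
Since the preceding discussion already shows that $f$ is a birational map with base loci $W_{\rm Cusp}$ and $W_{\rm T}$, the only thing left is the chain of set-theoretic equalities. The middle one, $W_{\rm Cusp}\times_k W_{\rm T}=p_1^{-1}(W_{\rm Cusp})\cap p_2^{-1}(W_{\rm T})$, I would dispose of immediately: inside $\overline{P}_{1,3}\times_k BP_{1,3}$ one has $p_1^{-1}(W_{\rm Cusp})=W_{\rm Cusp}\times_k BP_{1,3}$ and $p_2^{-1}(W_{\rm T})=\overline{P}_{1,3}\times_k W_{\rm T}$, whose intersection is the fibre product $W_{\rm Cusp}\times_k W_{\rm T}$ by definition. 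So the substance is the identity $\overline{G(f)}\setminus G(f)=W_{\rm Cusp}\times_k W_{\rm T}$, and since its inclusion $\supseteq$ is exactly Lemma \ref{graph} (there $\Gamma=W_{\rm Cusp}\times_k W_{\rm T}\subseteq\overline{G(f)}\setminus G(f)$), the plan is to prove the reverse inclusion $\subseteq$.

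Before that I would record two structural facts. First, $G(f)$ is open in $\overline{G(f)}$: because $Z$ is an open subspace of the separated algebraic space $BP_{1,3}$, the graph morphism $({\rm id},f):Y\to Y\times_k Z$ is a closed immersion, so $G(f)$ is closed in the open subspace $Y\times_k Z\subset\overline{P}_{1,3}\times_k BP_{1,3}$, hence locally closed, and $\overline{G(f)}\cap(Y\times_k Z)=G(f)$ is open in $\overline{G(f)}$. Second, $\overline{G(f)}$ is irreducible, being the closure of $G(f)\simeq Y$ and $Y$ being irreducible since $\overline{P}_{1,3}$ is.

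The key step is the claim that $p_1^{-1}(Y)\cap\overline{G(f)}=G(f)$, equivalently $p_1^{-1}(W_{\rm Cusp})\cap\overline{G(f)}=\overline{G(f)}\setminus G(f)$. To prove it I would take a point $x=(y,w)\in\overline{G(f)}$ with $y\in Y$ and, using that $G(f)$ is dense and open in the irreducible $\overline{G(f)}$, choose a complete discrete valuation ring $R$ with fraction field $K$ and closed point $s$, together with a morphism $\alpha:{\rm Spec}\ R\to\overline{G(f)}$ such that $\alpha(s)=x$ and $\alpha({\rm Spec}\ K)\subset G(f)$ (take a curve through $x$ meeting $G(f)$ and normalise; over an algebraic space this is routine via an étale chart). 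Then $p_1\circ\alpha:{\rm Spec}\ R\to\overline{P}_{1,3}$ sends both points of ${\rm Spec}\ R$ into the open set $Y$, hence factors through $Y$, so $f\circ p_1\circ\alpha:{\rm Spec}\ R\to Z\subset BP_{1,3}$ is defined; it agrees with $p_2\circ\alpha$ on ${\rm Spec}\ K$ because $\alpha({\rm Spec}\ K)\subset G(f)$, and since $BP_{1,3}$ is separated and ${\rm Spec}\ R$ is reduced the two morphisms coincide, giving $w=f(y)$ and $x\in G(f)$. Running the same reasoning with $f^{-1}:Z\to Y$ in place of $f$ — whose graph, inside $\overline{P}_{1,3}\times_k BP_{1,3}$, is again $G(f)$ — with the roles of $p_1,p_2$ and of the two separated targets interchanged (now using that $\overline{P}_{1,3}$ is projective, hence separated), yields $p_2^{-1}(W_{\rm T})\cap\overline{G(f)}=\overline{G(f)}\setminus G(f)$ as well. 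Combining the two, every $x\in\overline{G(f)}\setminus G(f)$ satisfies $p_1(x)\in W_{\rm Cusp}$ and $p_2(x)\in W_{\rm T}$, so $x\in W_{\rm Cusp}\times_k W_{\rm T}$; together with Lemma \ref{graph} this is the desired equality.

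The part I expect to be the main obstacle is precisely this key step, and within it the bookkeeping: ensuring $G(f)$ is genuinely open in $\overline{G(f)}$, justifying the existence of the trait $\alpha$ through an arbitrary point of $\overline{G(f)}$ when $BP_{1,3}$ is a priori only an algebraic space, and applying the valuative criterion of separatedness to the correct target in each of the two symmetric halves. Once those points are settled everything is formal topology plus the valuative criterion, and no new geometry of cubics and lines is needed beyond Lemma \ref{graph}.
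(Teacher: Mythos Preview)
Your proposal is correct and follows essentially the same strategy as the paper: after citing Lemma~\ref{graph} for the inclusion $\Gamma\subset\overline{G(f)}\setminus G(f)$, reduce to showing $\overline{G(f)}\setminus G(f)\subset p_1^{-1}(W_{\rm Cusp})\cap p_2^{-1}(W_{\rm T})$, and for this argue that if $p_1(x)\in Y$ then approaching $x$ from inside $G(f)$ and using continuity/separatedness forces $x\in G(f)$ (and symmetrically for $p_2$). Your write-up is more careful technically---invoking DVRs and the valuative criterion of separatedness explicitly, and verifying $G(f)$ is open in $\overline{G(f)}$---whereas the paper argues more informally via one-parameter limits $x_t\to x$ and the identity $f(p_1(x))=\lim f(p_1(x_t))=\lim p_2(x_t)=p_2(x)$; the underlying idea is the same.
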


\begin{proof}
Let $\Gamma ^\dagger = p_1^{-1} \left( W_{\rm Cusp} \right) \cap p_2^{-1} \left( W_{\rm T} \right)$. 
Clearly, $\Gamma = W_{\rm Cusp} \times_k W_{\rm T} \subset \Gamma ^ \dagger$. 
On the other hand, by Lemma \ref{graph}, $\Gamma \subset \overline{G(f)} \setminus G(f)$. 
Hence it suffices to show that $\Gamma ^\dagger  \subset \Gamma$ and $\overline{G(f)} \setminus G(f) \subset \Gamma ^\dagger $. 

We first show that $\Gamma ^\dagger  \subset \Gamma$. 
We denote by $\iota $ (resp. $\iota ^\dagger $) the canonical inclusion from $\Gamma$ (resp. $\Gamma ^\dagger $) to $\overline{P}_{1, 3} \times {BP}_{1, 3}$. 
Since $\Gamma ^{\dagger} \subset \overline{P}_{1, 3} \times {BP}_{1, 3}$, 
$p_1 \mid_{\Gamma^{\dagger}} : \Gamma^{\dagger} \to W_{\rm Cusp}$ and 
$p_2 \mid_{\Gamma^{\dagger}} : \Gamma^{\dagger} \to W_{\rm T}$ make a commutative diagram with the morphisms 
$W_{\rm Cusp} \to {\rm Spec} \ k$ and 
$W_{\rm T} \to {\rm Spec} \ k$. 
By the definition of the fibered product $\Gamma = W_{\rm Cusp} \times_k W_{\rm T}$, 
there exists a unique morphism $\theta  : \Gamma ^\dagger \rightarrow \Gamma$ such that $p_i \mid _{\Gamma} \circ \ \theta = p_i \mid _{\Gamma ^\dagger}$ ($i = 1, 2$). 
Then $\iota^\dagger $ and $\iota \circ \theta $ are morphisms from 
$\Gamma ^\dagger $ to $\overline{P}_{1, 3} \times {BP}_{1, 3}$ which satisfy 
$
p_i \circ (\iota \circ \theta ) = p_i \mid _{\Gamma} \circ \ \theta 
= p_i \mid _{\Gamma ^\dagger }
= p_i \circ \iota^\dagger
$ ($i = 1$, $2$). 
By the definition of the fibered product $\overline{P}_{1, 3} \times {BP}_{1, 3}$, such morphism is unique. Hence $\iota^\dagger = \iota \circ \theta $, and hence $\Gamma ^\dagger \subset \Gamma$. 

Next we show that $\overline{G(f)} \setminus G(f) \subset \Gamma ^\dagger $. 
For any $x \in \overline{G(f)} \setminus G(f)$, there exists $x_t \in G(f)$ such that $\underset{t \rightarrow 0}{\lim} \ x_t = x$. 
Suppose $p_1 (x) \in Y = \overline{P}_{1, 3} \setminus W_{\rm Cusp}$. 
Then 
\begin{eqnarray*}
Z \ni f (p_1 (x)) = \underset{t \rightarrow 0}{\lim} \ f (p_1 (x_t)) = \underset{t \rightarrow 0}{\lim} \ p_2 (x_t) = p_2 (x) .
\end{eqnarray*}
Thus $x \in G(f)$, which is absurd. Hence $p_1 (x) \in W_{\rm Cusp}$. 
Since $f$ is birational, we get $p_2 (x) \in W_{\rm T}$ similarly. 
Thus we obtain $\overline{G(f)} \setminus G(f) 
\subset p_1^{-1} \left( W_{\rm Cusp} \right) \cap p_2^{-1} \left( W_{\rm T} \right) 
= \Gamma ^\dagger $.
\end{proof}

\section{A comparison of $\overline{P}_{1, 3}$ and ${BP}_{1, 3}$ with 
$SQ_{1,3} \times {\mathbb P}^2$}

In this section, in order to compare $\overline{P}_{1,3}$ and ${BP}_{1,3}$ with $SQ_{1,3} \times {\mathbb P} (V)$, 
we construct natural morphisms from blowing-ups of $SQ_{1, 3} \times {\mathbb P} (V)$ to ${BP}_{1, 3}$ and $\overline{P}_{1, 3}$, respectively, 
where ${\mathbb P} (V) \simeq {\mathbb P}^2$ is the space of lines on ${\mathbb P}^2$ 
(see Section \ref{GIT moduli}) and 
$SQ_{1, 3} \simeq {\mathbb P}^1$ is the moduli of Hesse cubics defined in \cite{Nakamura99}, which is well-known classically as the modular curve of level three. 
In addition, we will see a relation between the birational map $f$ in Theorem \ref{main1} and these two morphisms. 
Note that the universal Hesse cubic over $SQ_{1,3}$ is given by 
\begin{align*}
\mu_0 (x_0^3 + x_1^3 + x_2^3 ) = 3 \mu_1 x_0 x_1 x_2 , \ \ \ (\mu_0 : \mu_1) \in {\mathbb P}^1. 
\end{align*}

\begin{Rem}
In this paper, we consider the moduli $SQ_{1,3}$ over an algebraically closed field $k$ with 
${\rm ch} (k) \not = 2$, $3$. 
However in \cite{Nakamura99}, I. Nakamura considers the moduli spaces over $\Z [\zeta_3, 1/3]$, 
where $\zeta_3$ is a primitive third root of unity. 
\end{Rem}

Let $X = SQ_{1,3} \times {\mathbb P} (V) \simeq {\mathbb P}^1 \times {\mathbb P}^2$. 
We define rational maps $\varphi : X \to {BP}_{1,3}$ and 
$\psi : X \to \overline{P}_{1,3}$ forgetting the level structure, that is, 
\begin{gather} 
\varphi \ ({\rm resp.} \ \psi) : (\mu_0 : \mu_1) \times (b_0 : b_1 : b_2) \longmapsto (C, L), 
\label{phi and psi} \\ 
C : \mu_0 (x_0^3 + x_1^3 + x_2^3) = 3 \mu_1 x_0 x_1 x_2, \ \ L : b_0 x_0 + b_1 x_1 + b_2 x_2 = 0 . 
\nonumber 
\end{gather}

\begin{Lem} \label{natural maps}
For each $\ell \in \mathbb{Z}$, let $[\ell] = \ell \mod 3 \in \{ 0, 1, 2 \}$. 
Then we have following properties. 
\begin{enumerate}[(i)]
\item 
The base locus of $\varphi$ is a union of four $3$-gons 
\begin{align*}
B^ {(a)} = \left\{ (a : 1 ) \times (b_0 : b_1 : b_ 2) \in X 
\; \middle| \; b^{(a)}_0 b^{(a)}_1 b^{(a)}_2 = 0 
\right\}, 
\end{align*}
where $a=0$ or $a ^3 = 1$, and 
$b_i^{(a)} = \left\{
\begin{array}{l l} 
b_i & (a=0), \\
b_0 + \zeta _3^i b_1 + a \zeta _3^{2 i} b_2 & (a^3=1). \\
\end{array} \right.$
\item 
The base locus of $\psi$ is a union of above four $3$-gons $B^{(a)}$ and nine lines 
\begin{align*}
A^{(j)}_i =  \left\{ 
(\mu_0 : \mu_1) \times (b_0 : b_1 : b_ 2) \in X \; \middle| \; 
\begin{array}{l}
b_{[i+1]} = b_{[i+2]} \zeta_3^{2 j}, \\ 
b_{[i+2]} \mu_1 = b_{i} \zeta_3^{2 j} \mu_0 \\
\end{array}
\right\}, 
\end{align*}
where $i$,$j \in \{0, 1, 2 \}$. 
\item 
Let $O^{(a,i)}$ be the vertex 
$V \left( \mu_0 - a \mu_1, b^{(a)}_{[i+1]}, b^{(a)}_{[i+2]} \right)$ of 
$B^{(a)}$ ($a=0$ or $a^3=1$, and $0 \leq i \leq 2$). 
Then each line $A^{(j)}_i$ passes through only four vertexes 
$O^{(0,i)}$ and $O^{\left( \zeta_3^k, [2 j + (2-i)k] \right)}$ ($0 \leq k \leq 2$), and 
each vertex $O^{(0, i)}$ (resp. $O^{\left( \zeta_3^k, i \right)}$) is contained in only 
three lines $A_i^{(j)}$ (resp. $A_j^{([2k (j+1) + 2i])}$) ($0 \leq j \leq 2$). 
\item 
Let $B = \bigcup B^{(a)}$ and $A = \bigcup A^{(j)}_i$. 
Let $f$ be the birational map in Theorem \ref{main1}. 
Then we have $\varphi = f \circ \psi$ on $X \setminus \left( B \cup A  \right)$. 
\item 
$\varphi$ and $\psi$ are generically finite of degree $216$. 
\end{enumerate}
\end{Lem}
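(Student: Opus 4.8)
The plan is to prove Lemma \ref{natural maps} one item at a time, since each part is essentially a direct computation with the explicit formulas \eqref{phi and psi}. For part (i), I would analyze when the map $\varphi$ fails to be defined: this happens exactly when the pair $(C,L)$ produced by \eqref{phi and psi} lies outside $W$, i.e.\ when $C$ is not a reduced nodal cubic (equivalently $C$ is a $3$-gon, since on the Hesse pencil the degenerate members with only nodes are the $3$-gons, occurring at $\mu_0=0$ or $\mu_1^3 = \mu_0^3$, i.e.\ $\mu_1/\mu_0 \in \{\infty\} \cup \{1, \zeta_3, \zeta_3^2\}$ up to the usual normalization giving $a = 0$ or $a^3 = 1$) or when $L$ passes through one of the three nodes of that $3$-gon. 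Writing each such $3$-gon in its three-line factored form and imposing that $L$ contains a node gives precisely the equation $b_0^{(a)} b_1^{(a)} b_2^{(a)} = 0$; the coordinates $b_i^{(a)}$ are just the linear forms cutting out the three lines of $B^{(a)}$, so this identifies the base locus with $\bigcup_a B^{(a)}$.

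For part (ii), $\psi$ maps into $\overline{P}_{1,3}$, which is larger than $BP_{1,3}$, so $\psi$ is defined wherever $(C,L)$ is a semistable pair. By Proposition \ref{all semistable pairs}, $\psi$ fails to be defined only when $C$ has a triple point or is nonreduced (impossible on the Hesse pencil except degenerately — actually every Hesse cubic is reduced without a triple point unless it is a $3$-gon, which is still semistable there), or when $L\subset C$, $L$ passes through a double point of $C$, or $L$ is a triple tangent to $C$. The first cases again produce the loci $B^{(a)}$; the new phenomenon is that $L$ is a triple tangent to a smooth or nodal Hesse cubic, which over the Hesse pencil is a codimension-one condition and, after solving, yields exactly the nine lines $A_i^{(j)}$. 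I would carry this out by parametrizing triple tangency: a line $L$ is a $3$-tangent to $C$ iff $C\cap L$ is a single point counted thrice; substituting $L$ into the Hesse equation and demanding the resulting binary cubic be a perfect cube gives two equations, which simplify to the displayed conditions $b_{[i+1]} = \zeta_3^{2j} b_{[i+2]}$ and $b_{[i+2]}\mu_1 = \zeta_3^{2j} b_i \mu_0$. The indexing by $(i,j)$ arises because the $3$-torsion / flex structure on a Hesse cubic gives exactly nine flexes, hence nine inflectional (triple) tangent lines as $\mu$ varies, grouped into three families of three.

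Part (iii) is pure bookkeeping with roots of unity: I would substitute the vertex coordinates $O^{(a,i)}$ into the defining equations of $A_i^{(j)}$ and count solutions in $k$ (or $\mathbb{G}_m$) of the resulting congruences mod $3$, using $[\ell] = \ell \bmod 3$; the incidence numbers "four vertices per line, three lines per vertex" drop out of solving $[2j + (2-i)k] = m$ for $k$ and for $j$. Part (iv): on $X\setminus(B\cup A)$ both $\varphi$ and $\psi$ are defined, and by construction they send $(\mu_0:\mu_1)\times(b_0:b_1:b_2)$ to the same pair $(C,L)$, viewed in $BP_{1,3}$ and in $\overline{P}_{1,3}$ respectively; since $f$ is the identity on the common open set $Y\cap Z$ (the pairs with $C$ smooth or irreducible nodal and $L$ transversal — and after deleting $B\cup A$ the image of $\psi$ lies in this common locus away from $W_{\rm Cusp}$ and $W_{\rm T}$), we get $\varphi = f\circ\psi$ directly. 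Part (v): a general pair $(C,L)$ with $C$ smooth corresponds to a point of $U_{1,3}$; the fiber of $\varphi$ (or $\psi$) over it is the set of level-three structures on the fixed elliptic curve $C$ together with the choice of which Hesse model is used, and $\#\mathrm{SL}_2(\mathbb{Z}/3) = 24$ acting on the $\mathbb{P}^1\times\mathbb{P}^2$ modulo the Hesse group $G_{216}$ of order $216$ — the clean way is: the map $X\to \overline{P}_{1,3}$ restricted to a general fiber is a torsor under a group of order $216$ (the Hesse group acting faithfully and transitively on the level structures over a fixed $j$), so the degree is $216$; I would make this precise by exhibiting $216$ distinct preimages of a generic point, using that $G_{216}$ acts freely on the generic fiber.

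The main obstacle I expect is part (ii): correctly deriving the nine triple-tangent lines $A_i^{(j)}$ and, in particular, getting the index conventions (the shifts by $[i+1],[i+2]$ and the exponents $2j$) to match the stated formulas. Triple tangency on a cubic pencil is the delicate computation here — one must be careful that tangency at a \emph{smooth} point of $C$ is what occurs, handle the nodal members of the Hesse pencil separately, and check that no extra components (e.g.\ lines through a flex of every member simultaneously, or the coordinate lines) sneak in. Everything else is either a short stability argument citing Proposition \ref{all semistable pairs} and Proposition \ref{list of unstable pairs}, or elementary manipulation of cube roots of unity.
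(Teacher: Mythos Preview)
Your proposal is correct and follows essentially the same route as the paper: parts (iii) and (iv) are dismissed as direct checks, part (i) comes from identifying the four singular Hesse cubics and the condition that $L$ pass through a node, part (ii) adds the triple-tangent locus, and part (v) uses the Hesse group $G_{216}$. Two small differences in execution are worth noting. For (ii), rather than substituting $L$ into the cubic and forcing the resulting binary cubic to be a perfect cube, the paper simply lists the nine inflection points $(0:1:-\zeta_3^i)$, $(1:0:-\zeta_3^i)$, $(1:-\zeta_3^i:0)$ of every Hesse cubic and reads off the inflectional tangent lines; this bypasses the algebra you flagged as the main obstacle and makes the indexing $(i,j)$ transparent. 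For (v), the paper makes the degree count concrete via orbit--stabilizer: $G_{216}$ acts on $SQ_{1,3}$ with generic orbit of size $12$, the stabilizer $G_\mu$ (generated by $\sigma,\tau,\sigma_3$) has order $18$ and acts on $\mathbb{P}(V)$ with generic orbit of size $18$, giving $12\times 18 = 216$ preimages; this is exactly your ``exhibit $216$ distinct preimages using that $G_{216}$ acts freely on the generic fiber,'' carried out explicitly.
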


\begin{proof}
(iii) and (iv) are clear from definitions. We prove (i), (ii) and (v). 

Firstly, we prove (i). 
$\varphi$ is not well-defined if 
$L$ passes through a singular point of $C$. 
A Hesse cubic $C$ has singular points if and only if $\mu_0 / \mu_1 = 0$, $1$, $\zeta_3$ or 
$\zeta_3^2$. Then $C$ is a $3$-gon and its singularities are 
\begin{gather*}
 (1 :  0 : 0), \ (0 : 1 : 0), \ (0 : 0 : 1) \ \ \mbox{if $\mu_0/ \mu _1 = 0$, and} \\ 
(1 :  1 : a), \ 
(1 : \zeta_3 : a \zeta_3^{2}), \ (1 : \zeta_3^{2} : a \zeta_3) \ \ 
\mbox{if $\mu_0/ \mu _1 = a$ ($a^3=1$)}. 
\end{gather*}
Hence a line $L : b_0 x_0 + b_1 x_1 + b_2 x_2 = 0$ passes through a singular point of $C$ 
if and only if 
$\mu_0 / \mu_1 = a$ and $b^{(a)}_0 b^{(a)}_1 b^{(a)}_2 = 0$, where $a = 0$ or $a^3 = 1$. 
Therefore the base locus of $\varphi$ is the union of four $3$-gons $B^{(a)}$ 
($a=0$ or $a^3=1$). 

Secondly, we prove (ii). 
$\psi$ is not well-defined if $L$ passes through a singular point of $C$, or $L$ is $3$-tangent to $C$. 
If $L$ is $3$-tangent to $C$, then $L$ is tangent to $C$ at one of the nine inflection points 
\begin{align*}
(0 : 1 : -\zeta_3^i), \ 
(1 : 0 : -\zeta_3^i), \ 
(1 : -\zeta_3^i : 0) \ \ (0 \leq i \leq 2)
\end{align*}
of $C$. Hence we can check easily that 
$L$ is $3$-tangent to $C$ 
if and only if 
$b_{[i+1]} = b_{[i+2]} \zeta_3^{2 j}$ and $b_{[i+2]} \mu_1 = b_i \zeta_3^{2 j} \mu_0$ 
for some $i$, $j \in \{0, 1, 2\}$. 
Therefore the base locus of $\psi$ is the union of four $3$-gons $B^{(a)}$ ($a=0$ or $a^3=1$) 
and nine lines $A^{(j)}_i$ ($i$, $j \in \{0, 1, 2\}$). 

Finally, we prove (v). 
The Hesse group $G_{216}$ is the subgroup of ${\rm PGL} (3)$ consisting of 
$g \in {\rm PGL} (3)$ such that $g \cdot C$ is a Hesse cubic.  
It is generated by 
\begin{align*}
\sigma = \begin{bmatrix}
1 & 0 & 0 \\
0 & \zeta_3 & 0 \\
0 & 0 & \zeta_3^2 \\
\end{bmatrix}, \ 
\tau = \begin{bmatrix}
0 & 0 & 1 \\
1 & 0 & 0 \\
0 & 1 & 0 \\
\end{bmatrix}, \
\sigma_1 = \begin{bmatrix}
1 & 1 & 1 \\
1 & \zeta_3 & \zeta_3^2 \\
1 & \zeta_3^2 & \zeta_3 \\
\end{bmatrix}, \ 
\sigma_2 = \begin{bmatrix}
\zeta_3 & 0 & 0 \\
0 & 0 & 1 \\
0 & 1 & 0 \\
\end{bmatrix}
\end{align*}
and its order is $216$ (see Theorem 3.1.7 in \cite{Dolgachev12}). 
Then the action of ${\rm PGL} (3)$ on $\mathbb{P} (S^3 V)$ defined in Section 2 induces an 
action of $G_{216}$ on $SQ_{1,3}$ given  by 
\begin{gather*}
\sigma \cdot (\mu_0 : \mu_1) = (\mu_0 : \mu_1), \ \ 
\tau \cdot (\mu_0 : \mu_1) = (\mu_0 : \mu_1), \\
\sigma_1 \cdot (\mu_0 : \mu_1) = (\mu_1 - \mu_0 : \mu_1 + 2 \mu_0 ), \ \ 
\sigma_2 \cdot (\mu_0 : \mu_1) = (\mu_0 : \zeta_3^2 \mu_1). 
\end{gather*}
Then for any point $\mu = (\mu_0 : \mu_1) \in SQ_{1,3}$, 
the orbit consists of the following $12$ points: 
\begin{align*}
(\mu_0 : \alpha \mu_1), \ (\beta \mu_1 - \mu_0 : \gamma (\beta \mu_1 + 2 \mu_0 ) ) \ \ (\alpha ^3 = \beta ^3 = \gamma ^3 = 1). 
\end{align*}
On the other hand, 
the stabilizer subgroup $G_{\mu} \subset G_{216}$ is generated by 
\begin{align*}
\sigma = \begin{bmatrix}
1 & 0 & 0 \\
0 & \zeta_3 & 0 \\
0 & 0 & \zeta_3^2 \\
\end{bmatrix}, \ 
\tau = \begin{bmatrix}
0 & 0 & 1 \\
1 & 0 & 0 \\
0 & 1 & 0 \\
\end{bmatrix}, \
\sigma_3 := \sigma_1^2 =
\sigma_2^3 = \begin{bmatrix}
1 & 0 & 0 \\
0 & 0 & 1 \\
0 & 1 & 0 \\
\end{bmatrix}
\end{align*}
and its order is $18$. Moreover for any point $b = (b_0 : b_1 : b_2) \in {\mathbb P} (V)$, 
the orbit $G_{\mu} \cdot b$ consists of the following $18$ points: 
\begin{align*}
(b_i : \delta b_j : \delta^2 b_k) \ \ (\delta^3 = 1 , \ \{ i, j, k \} = \{ 0, 1, 2 \}). 
\end{align*}
Hence in general, for any pair $(C, L)$ given by (\ref{phi and psi}), 
$\varphi ^{-1} (C, L)$ and $\psi^{-1} (C,L)$ consist of the following $216$ points, respectively: 
\begin{align*}
(\mu_0 : \alpha \mu_1) 
\times (b_i : \delta b_j : \delta^2 b_k), 
 \ (\beta \mu_1 - \mu_0 : \gamma (\beta \mu_1 + 2 \mu_0 ) ) 
\times (b_i : \delta b_j : \delta^2 b_k) , 
\end{align*} 
where $\alpha ^3 = \beta^3 = \gamma^3 = \delta^3 = 1$ and $\{ i, j, k \} = \{ 0, 1, 2 \}$. 
Therefore $\varphi$ and $\psi$ are generically finite of degree $216$. 
\end{proof}

In the rest of this section, 
we will construct explicitly morphisms $\pi : \tilde{X} \to X$ and $p : \hat{X} \to \tilde{X}$ 
as compositions of blowing-ups, 
and construct morphisms $\tilde{\varphi} : \tilde{X} \to BP_{1,3}$ and 
$\hat{\psi} : \hat{X} \to \overline{P}_{1,3}$ such that 
$\tilde{\varphi}$ and $\hat{\psi}$ are extensions of $\varphi$ and $\psi$ respectively.  
Meanwhile, we will prove the following Proposition: 
\begin{Prop} \label{main2}
We have a commutative diagram 
\begin{eqnarray*}
\begin{CD}
\hat{X} @> \hat{\psi} >> \overline{P}_{1, 3}\\
@V p  VV  @VV f V\\
\tilde{X} @> \tilde{\varphi} >> {BP}_{1, 3}\\
@V\pi  VV \\
X
\end{CD}
\end{eqnarray*}
where $X = SQ_{1,3} \times {\mathbb P}(V) \simeq {\mathbb P}^1 \times {\mathbb P}^2$, 
$f$ is the birational in Theorem \ref{main1}, 
and $\pi$ and $p$ are compositions of blowing-ups 
with nonsingular (reducible) centers, respectively. 
Then we have the following properties: 
\begin{enumerate}[(i)]
\item Morphisms $\tilde{\varphi}$ and $\hat{\psi}$ are extensions of $\varphi$ and $\psi$ respectively. In particular,  they are generically finite of degree $216$. 
\item The base locus of $f$ (resp. $f^{-1}$) is $W_{\rm Cusp} \simeq {\mathbb P}^1$ (resp. 
$W_{\rm T} \simeq {\mathbb P}^1$), and 
\item $\tilde{\varphi}^{-1} (W_{\rm T}) = \tilde{A}$ and $\hat{\psi}^{-1} (W_{\rm Cusp}) = \hat{E}$, where $\tilde{A}$ (resp. $\hat{E}$) is the center (resp. the exceptional set) of 
$p : \hat{X} \to \tilde{X}$. 
\end{enumerate}
\end{Prop}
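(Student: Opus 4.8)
The plan is to build the diagram of Proposition~\ref{main2} by first resolving the indeterminacy of $\varphi$ to obtain $\pi:\tilde X\to X$ and $\tilde\varphi:\tilde X\to BP_{1,3}$, and then resolving the indeterminacy of the lifted $\psi$ to obtain $p:\hat X\to\tilde X$ and $\hat\psi:\hat X\to\overline P_{1,3}$. Once these morphisms are constructed, (i) is immediate since $\tilde\varphi$, $\hat\psi$ restrict to $\varphi$, $\psi$ on dense opens and hence are generically finite of degree $216$ by Lemma~\ref{natural maps}(v); (ii) is exactly Theorem~\ref{main1}; and the commutativity $f\circ\hat\psi=\tilde\varphi\circ p$ holds as an equality of rational maps into the separated algebraic space $BP_{1,3}$ because it reduces to $\varphi=f\circ\psi$ on the dense open complement of all exceptional loci, which is Lemma~\ref{natural maps}(iv). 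So the substance of the proof is the explicit construction of $\pi$, $p$ and the verification of (iii).

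For $\pi$: by Lemma~\ref{natural maps}(i) the base locus of $\varphi$ is $B=\bigcup B^{(a)}$, a union of four $3$-gons whose twelve line components meet pairwise at the twelve vertices $O^{(a,i)}$ described in Lemma~\ref{natural maps}(iii). I would take $\pi$ to be the blow-up of $X$ along the finite set of these twelve vertices, followed by the blow-up along the (now disjoint, nonsingular) strict transforms of the twelve lines of $B$ --- a composition of blow-ups with nonsingular reducible centers --- adding further such blow-ups should the local analysis require them. A local-coordinate computation near a general point of each component of $B$, and near each vertex (where one uses that along $B$ the Hesse cubic is a $3$-gon and $L$ degenerates to a line through a vertex), shows that $\varphi$ extends to a morphism $\tilde\varphi:\tilde X\to BP_{1,3}$, and that the pairs appearing over the $\pi$-exceptional locus have reducible cubic. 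Since every pair in $W_{\rm T}$ has smooth or irreducible nodal cubic, those exceptional pairs avoid $W_{\rm T}$. On the other hand $\varphi$ is already a morphism on $A\setminus B$, and by the computation in the proof of Lemma~\ref{natural maps}(ii) a point of $X$ lies over $W_{\rm T}$ precisely when $L$ is $3$-tangent to $C$, i.e.\ precisely on $A$. Taking closures gives $\tilde\varphi^{-1}(W_{\rm T})=\tilde A$, the strict transform of $A$, which after $\pi$ is a disjoint union of nine nonsingular rational curves; this is the first half of (iii).

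For $p$: pulling $\psi$ back along $\pi$, its base locus is the strict transform of $B\cup A$, and the same local analysis as above --- now reading the $3$-gon degeneration through the GIT quotient and using Lemma~\ref{loci} and Proposition~\ref{nontrivial identifications} to identify the resulting semistable pairs (which again have reducible cubic) --- removes the indeterminacy over $\pi^{-1}(B)$, so that the remaining base locus of $\psi\circ\pi$ is exactly $\tilde A$. Let $p:\hat X\to\tilde X$ be the blow-up along $\tilde A$, with exceptional divisor $\hat E$; this is again a blow-up with nonsingular reducible center. The crux is the local computation along $\tilde A$: there $C$ is a smooth or irreducible nodal cubic and $L$ degenerates to a $3$-tangent, so the pair becomes unstable and one must compute its semistable replacement in $\overline P_{1,3}$. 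This is precisely the degeneration recorded in Section~\ref{comparison}: a general nearby pair, put in the Hesse form $z(\mu)$, degenerates as $L$ approaches the $3$-tangent locus, and its semistable limit in $\overline P_{1,3}$ is a pair consisting of a cuspidal cubic and a $2$-tangent line, that is, a point of $W_{\rm Cusp}$ parametrized as in Lemma~\ref{Cusp}. Hence $\psi$ extends to a morphism $\hat\psi:\hat X\to\overline P_{1,3}$ with $\hat\psi(\hat E)\subseteq W_{\rm Cusp}$; and since, away from $\hat E$, the morphism $\hat\psi$ takes values only in pairs whose cubic is not cuspidal (smooth or $3$-gon Hesse cubics, together with the reducible limits over $\pi^{-1}(B)$), none of which lie in $W_{\rm Cusp}$, we obtain $\hat\psi^{-1}(W_{\rm Cusp})=\hat E$, the second half of (iii).

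I expect the main obstacle to be the package of explicit local-coordinate computations on the blown-up threefolds: verifying that the stated sequences of blow-ups actually resolve $\varphi$ and then $\psi$ (so that blowing up the twelve vertices, then the twelve lines of $B$, then the nine curves $\tilde A$, suffices, with no hidden further centers needed), and --- the genuinely geometric point --- computing the semistable limit along $\hat E$ and matching it with the parametrization $W_{\rm Cusp}\simeq{\mathbb P}(2,3)\simeq{\mathbb P}^1$ of Lemma~\ref{Cusp}, while simultaneously confirming that none of the other limiting pairs (over the $\pi$-exceptional locus) accidentally land in $W_{\rm Cusp}$, for which one compares against the identifications of Proposition~\ref{nontrivial identifications}. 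The remaining arguments --- commutativity, the generic degree, the shape of the base loci --- are then bookkeeping with the combinatorics of the configuration $B\cup A$ catalogued in Lemma~\ref{natural maps}(iii).
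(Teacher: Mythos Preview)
Your overall strategy matches the paper's: resolve $\varphi$ by blowing up $X$, observe that the lifted $\tilde\psi$ has base locus exactly $\tilde A$, then resolve $\tilde\psi$ by further blow-ups; (i), (ii) and the commutativity then follow from Lemma~\ref{natural maps}, Theorem~\ref{main1}, and density, as you say.

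The gap is the one you flag yourself but do not resolve: your proposed blow-up sequences are too short. For $\pi$, the paper's local computation shows that after blowing up the twelve vertices $O^{(a,i)}$ and then the twelve strict transforms of the edges of $B$, a \emph{third} blow-up is needed, along twelve specific lines sitting inside the second exceptional divisor (the lines $L'_1,L'_2$ in each chart, defined in Step~2 of the construction of $\tilde\varphi$). Only then does $\varphi$ extend. More seriously, your assertion that $p$ is a \emph{single} blow-up along $\tilde A$ is incorrect: the paper requires three iterated blow-ups here as well, with centers $W_1=\tilde A$, then a line $W_2$ in the exceptional divisor of $p_1$, then a line $W_3$ in the exceptional divisor of $p_2$. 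One blow-up along $\tilde A$ does not resolve $\tilde\psi$, and this matters for (iii): $\hat E$ is the full exceptional locus of the triple composite $p_3\circ p_2\circ p_1$, and the identification $\hat\psi^{-1}(W_{\rm Cusp})=\hat E$ is read off from the explicit forms of $(C^{(r)},L^{(r)})$ on all four charts $V_1,\ldots,V_4$ of that triple blow-up. A minor correction: the generic semistable limit along $\hat E$ is a cuspidal cubic with a \emph{transversal} line (the stable locus $S_{10}$), not a $2$-tangent; the $2$-tangent case is the single point $\phi(S_{11})\in W_{\rm Cusp}$.
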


\subsection{Constructions of morphisms $\pi$ and $\tilde{\varphi}$} 
\label{to BP}

We construct $\pi : \tilde{X} \to X$ and $\tilde{\varphi} : \tilde{X} \to BP_{1,3}$ in Steps 1-4. 

{\bf Step 1.} 
We take an affine open covering of $X$ consisting of $15$ affine subsets $U^{(a, i)}$ 
($a = 0$, $\infty$ or $a^3=1$, and $0 \leq i \leq 2$) 
such that $\varphi$ is well-defined on each $U^{(\infty, i)}$, 
and the base locus of $\varphi$ on each $U^{(a, i)}$ ($a \neq \infty$) is a union of two axes. 
In fact, we define $U^{(a, i)}$ as follows: 
For $a = 0$, $\infty$ or $a^3=1$, we put 
\begin{align*}
\mu_0^{(a)} &= \left\{
\begin{array}{l l} 
\mu_0 & (a=0), \\
a \mu_1 - \mu_0 & (a^3=1), \\
\mu_1 & (a=\infty), \\
\end{array} \right. 
\ \ 
\mu_1^{(a)} = \left\{
\begin{array}{l l} 
\mu_1 & (a=0), \\
a \mu_1 + 2 \mu_0 & (a^3=1), \\
\mu_0 & (a=\infty), \\
\end{array} \right. 
\\
% \end{align*}
% \begin{align*}
b_i^{(a)} &= \left\{
\begin{array}{l l} 
b_i & (a=0, \infty), \\
b_0 + \zeta _3^i b_1 + a \zeta _3^{2 i} b_2 & (a^3=1), \\
\end{array} \right. \ \ (0 \leq i \leq  2)\\
u^{(a)} &= \mu_0^{(a)} / \mu_1^{(a)} , \ \
s^{(a)}_{i, j} = b^{(a)}_i / b^{(a)}_j . 
\end{align*}
Let 
\begin{equation*} 
U^{(a, i)} = {\rm Spec} \left( k \left[ u^{(a)}, s^{(a)}_{[i+1], i}, s^{(a)}_{[i+2], i} , \frac{1}{\left( u^{(a)} \right) ^3 - 1}\right] \right), 
\end{equation*}
where $[\ell] = \ell \mod 3 \in \{ 0, 1, 2 \}$. 
Note that when $a\ne\infty$, the origin of $U^{(a,i)}$ is $O^{(a,i)}$ 
in Lemma \ref{natural maps}, (iii). 
Then $X$ has an affine open covering 
\begin{align*}
X = \bigcup_{\substack{a = 0, \infty \ {\rm or} \ a^3 = 1, \\ 0 \leq i \leq 2}} U^{(a, i)} .
\end{align*}
By Lemma \ref{natural maps}, (i), 
$\varphi$ is well-defined on $U^{\infty} := \bigcup_{i=0}^2 U^{(\infty, i)}$ and the base locus of 
$\varphi$ on each $U^{(a, i)}$ ($a \neq \infty$) is 
the union of the $s^{(a)}_{[i+1], i}$-axis and the $s^{(a)}_{[i+2], i}$-axis, that is, 
the union of two lines $V \left( u^{(a)}, s^{(a)}_{[i+2], i} \right)$ and 
$V \left( u^{(a)}, s^{(a)}_{[i+1], i} \right)$. 

{\bf Step 2.} 
We blow up each $U^{(a, i)}$ ($a \ne \infty$) along the base locus of $\varphi$. 
We give a construction in detail in the case of $U^{(0, 0)}$, since other cases are similar. 
For simplicity, we put $(u, s_1, s_2) = \left( u^{(0)}, s^{(0)}_{1,0}, s^{(0)}_{2,0} \right)$ and 
$U = U^{(0, 0)}$. 
Let $L_j$ ($j=1$, $2$) be the $s_j$-axis, that is, 
$L_1 = V \left( u, s_2 \right)$ and $L_2 = V \left( u, s_1 \right)$. 
We blow up $U$ as follows: 
\begin{equation*} 
U \overset{\pi _1}{\longleftarrow } 
B_{Z_1} (U)  \overset{\pi _2}{\longleftarrow } 
B_{Z_2} B_{Z_1} (U)  \overset{\pi _3}{\longleftarrow } 
\tilde{U} := B_{Z_3} B_{Z_2} B_{Z_1} (U), 
\end{equation*}
where the symbol $B_Z (Y)$ means the blowing-up of $Y$ along $Z$. 
We denote by $\pi$ the composition of blowing-ups $\pi_1$, $\pi_2$ and $\pi_3$. 
Then centers $Z_1$, $Z_2$ and $Z_3$ are defined as follows: 
\begin{itemize}
\item $Z_1$ is the origin of $U$, that is, 
$Z_1 = L_1 \cap L_2 = O^{(0, 0)}$, 
\item $Z_2$ is a disjoint union of $2$ lines which are proper transforms $\tilde{L}_j$ ($j = 1$, $2$) of $L_j$ under the blowing-up $\pi_1$, and 
\item $Z_3$ is a disjoint union of $2$ lines $L'_j$ ($j=1$, $2$) in the exceptional set of $\pi_2$. 
Here $L'_1$ and $L'_2$ are defined as follows: 
$B_{Z_2} B_{Z_1} (U)$ has an affine open covering consisting of 
$5$ affine open subsets 
\begin{gather}
{\rm Spec} \left( k \left[ u, \frac{s_{1}}{u}, \frac{s_{2}}{u}, \frac{1}{ u^3 - 1} \right] \right) , 
\nonumber 
\ \  
{\rm Spec} \left( k \left[ s_{2}, \frac{u}{s_{2}}, \frac{s_{1}}{u}, 
\frac{1}{ s_2^3 \left(\frac{u}{s_2} \right)^3 - 1} \right] \right) , 
\nonumber 
\\ 
% \end{gather}
% \begin{gather}
{\rm Spec} \left( k \left[ s_{2}, \frac{u}{s_{1}}, \frac{s_{1}}{s_2}, 
\frac{1}{ s_2^3 \left(\frac{u}{s_1} \right)^3 \left( \frac{s_1}{s_2} \right) ^3 - 1} \right] \right) , 
\label{U2 (3)} 
%\\ 
 \end{gather}
 \begin{gather}
{\rm Spec} \left( k \left[ s_{1}, \frac{s_{2}}{s_1}, \frac{u}{s_{2}}, 
\frac{1}{ s_1^3 \left(\frac{s_2}{s_1} \right)^3 \left( \frac{u}{s_2} \right) ^3- 1} \right] \right) , 
\label{U2 (4)} 
\\ 
% \end{gather}
% \begin{gather}
{\rm Spec} \left( k \left[ s_{1}, \frac{s_{2}}{u}, \frac{u}{s_{1}}, 
\frac{1}{s_1^3 \left(\frac{u}{s_1} \right)^3 - 1} \right] \right) . \nonumber 
\end{gather}
Then we define $L'_1$ (resp. $L'_2$) as the $s_1$
(resp. $s_2$)-axis in 
(\ref{U2 (4)}) (resp. (\ref{U2 (3)})). 
\end{itemize}
Then $\tilde{U} = B_{Z_3} B_{Z_2} B_{Z_1} (U)$ has an affine open covering consisting of 
$7$ affine open subsets defined by 
\begin{gather*}
U_1 
= {\rm Spec} \left( 
k \left[ u, \frac{s_1}{u}, \frac{s_2}{u} , \frac{1}{u_1^3 - 1} \right] \right) , \ \ 
% \end{align}
% \begin{align}
U_2 
= {\rm Spec} \left( k \left[ s_2, \frac{u}{s_2}, \frac{s_1}{u} , 
\frac{1}{ u_2 ^3 - 1} \right] \right) , 
\\ 
% \end{gather*}
% \begin{gather*}
U_3 
= {\rm Spec} \left( k \left[ s_2, \frac{u}{s_1}, \frac{s_1^2}{u s_2}, 
\frac{1}{ u_3^3 - 1} \right] \right) , 
 \ \ 
%\end{align}
%\begin{align}
U_4 
= {\rm Spec} \left( k \left[ s_2, \frac{u s_2}{s_1^2}, \frac{s_1}{s_2} , 
\frac{1}{ u_4 ^3 - 1} \right] \right) , 
\\ 
% \end{align}
% \begin{align}
U_5 
= {\rm Spec} \left( k \left[ s_1, \frac{u s_1}{s_2^2}, \frac{s_2}{s_1} , 
\frac{1}{ u_5 ^3 - 1} \right] \right) , 
\ \ 
% \end{align}
% \begin{align}
U_6 
= {\rm Spec} \left( k \left[ s_1, \frac{u}{s_2}, \frac{s_2^2}{u s_1}, 
\frac{1}{ u_6^3 - 1} \right] \right) , 
% \\ 
 \end{gather*}
 \begin{gather*}
U_7 
= {\rm Spec} \left( k \left[ s_1, \frac{u}{s_1}, \frac{s_2}{u} , 
\frac{1}{ u_7 ^3 - 1} \right] \right) , 
\end{gather*}
where for each $j \in \{ 1, \dots, 7 \}$, we denote by $v^{(j)}_0$, $v^{(j)}_1$, $v^{(j)}_2$ the above coordinates of $U_j$, and $u_j$ is given by
\begin{equation} \label{u^(a,i)_j}
u_j = \left\{
\begin{array}{l l} 
v^{(1)}_0 & (j = 1), \\
v^{(j)}_0 v^{(j)}_1 & (j = 2, 7), \\
v^{(j)}_0 \left( v^{(j)}_1 \right)^2 v^{(j)}_2 & (j = 3, 6), \\
v^{(j)}_0 v^{(j)}_1 \left( v^{(j)}_2 \right)^2 & (j = 4, 5). \\
\end{array} \right. 
\end{equation}

{\bf Step 3.} 
We define a morphism $\tilde{\varphi}$ from $\tilde{U}$ to ${BP}_{1, 3}$ such that 
$\tilde{\varphi}$ is an extension of $\varphi$. 
For it, we define morphisms $\tilde{\varphi}_j$ from $U_j$ to ${BP}_{1, 3}$, respectively, 
and we glue these morphisms. 
Let 
\begin{gather*}
F_{(j)} (x) := u_j \left( x_0^3 + x_1^3 + x_2^3 \right) - 3 x_0 x_1 x_2, \ 
S_{(j)} (x) := x_0 + s^{(j)}_1 x_1 + s^{(j)}_2 x_2 
\end{gather*}
for each $j \in \{1, \dots, 7\}$, where 
\begin{align*}
s^{(j)}_1 := \left\{ 
\begin{array}{cl}
v^{(1)}_0 v^{(1)}_1 & (j = 1), \\
v^{(j)}_0 v^{(j)}_1 v^{(j)}_2 & (j = 2, 3), \\
v^{(4)}_0 v^{(4)}_2 & (j = 4), \\
v^{(j)}_0 & (j = 5, 6, 7), 
\end{array}
\right. \ 
s^{(j)}_2 := \left\{ 
\begin{array}{cl}
v^{(j)}_0 v^{(j)}_2 & (j = 1, 5), \\
v^{(j)}_0 & (j = 2, 3, 4), \\
v^{(j)}_0 v^{(j)}_1 v^{(j)}_2 & (j = 6, 7).
\end{array}
\right.
\end{align*}
Then we define $\tilde{\varphi}_j$ formally as follows: 
\begin{align*}
\tilde{\varphi}_j \left( v^{(j)}_0, v^{(j)}_1, v^{(j)}_2 \right) 
&:= M_j \cdot \left( V \left(F_{(j)} (x) \right), V \left( S_{(j)} (x) \right) \right) \\
&= \left( 
V \left( F_{(j)} \left( M_{j}^{-1} \cdot x \right) \right), 
V\left( S_{(j)} \left( M_{j}^{-1} \cdot x \right) \right) \right)
\ \ (1 \leq j \leq 7), 
\end{align*}
where $M_{j} := {\rm Diag} \left(1, m^{(j)}_1, m^{(j)}_2\right)$ and 
\begin{eqnarray*}
m^{(j)}_1 := \left\{ 
\begin{array}{cl}
u_1 & (j = 1), \\
\left( u_j s^{(j)}_2 \right)^{\frac{1}{2}} & (j = 2, 3), \\
s^{(j)}_1 & (j = 4, 5, 6, 7), \\
\end{array}
\right. \ 
m^{(j)}_2 := \left\{ 
\begin{array}{cl}
u_1 & (j=1), \\
s^{(j)}_2 & (j = 2, 3, 4, 5), \\
\left( u_j s^{(j)}_1 \right)^{\frac{1}{2}} & (j = 6, 7). \\
\end{array}
\right.
\end{eqnarray*}
By simple calculations, we can see them specifically as follows: 
\begin{Prop} \label{varphi}
Let $\tilde{\varphi}_j \left( v^{(j)}_0, v^{(j)}_1, v^{(j)}_2 \right) 
= \left( C_{(j)}, L_{(j)} \right)$ ($1 \leq j \leq 7$). Then 
\begin{align*}
C_{(1)} &: \left( v^{(1)}_0 \right )^3 x_0^3 + x_1^3 + x_2^3 = 3 x_0 x_1 x_2, \ \ 
L_{(1)} : x_0 + v^{(1)}_1 x_1 + v^{(1)}_2 x_2 = 0 ,
\\
% \end{align}
% \begin{align}
%
C_{(2)} &: x_1 (x_1^2 - 3 x_0 x_2) 
= - \left( v^{(2)}_1 \right)^{3/2} \left( \left( v^{(2)}_0 \right) ^3 x_0^3 + x_2^3 \right), \\
L_{(2)} &: x_0 + \left( v^{(2)}_1 \right) ^{1/2} v^{(2)}_2 x_1 + x_2 = 0 ,
\\
% \end{align*}
% \begin{align*}
%
C_{(3)} &: x_1 (x_1^2 - 3 x_0 x_2) = - \left( v^{(3)}_1 \right) ^3 \left( v^{(3)}_2 \right)^{3/2} 
\left( \left( v^{(3)}_0 \right) ^3 x_0^3 + x_2^3 \right), \\
L_{(3)} &: x_0 + \left( v^{(3)}_2 \right)^{1/2} x_1 + x_2 = 0 ,
%\\
 \end{align*}
 \begin{align*}
C_{(4)} &: 3 x_0 x_1 x_2 
= v^{(4)}_1 \left( x_1^3 + \left( v^{(4)}_2 \right) ^3 
\left( x_2^3 + \left( v^{(4)}_0 \right) ^3 x_0^3 \right) \right), \\
L_{(4)} &: x_0 + x_1 + x_2 = 0 
\end{align*}
and $\left( C_{(5)}, L_{(5)} \right)$ (resp. $\left( C_{(6)}, L_{(6)} \right)$ or 
$\left( C_{(7)}, L_{(7)} \right)$) is given by replacing $x_1$ with $x_2$, and 
$v^{(9-j)}_k$ with $v^{(j)}_k$ in $\left( C_{(4)}, L_{(4)} \right)$ 
(resp. $\left( C_{(3)}, L_{(3)} \right)$ or $\left( C_{(2)}, L_{(2)} \right)$). 
\end{Prop}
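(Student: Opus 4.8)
The plan is to prove this by a direct, chart‑by‑chart computation. Fix $j$. Since $M_j^{-1}={\rm Diag}(1,(m_1^{(j)})^{-1},(m_2^{(j)})^{-1})$, the formal substitution $x\mapsto M_j^{-1}\cdot x$ gives $F_{(j)}(M_j^{-1}\cdot x)=u_j x_0^3+u_j(m_1^{(j)})^{-3}x_1^3+u_j(m_2^{(j)})^{-3}x_2^3-3(m_1^{(j)}m_2^{(j)})^{-1}x_0x_1x_2$ and $S_{(j)}(M_j^{-1}\cdot x)=x_0+s_1^{(j)}(m_1^{(j)})^{-1}x_1+s_2^{(j)}(m_2^{(j)})^{-1}x_2$. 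Since $V(\,\cdot\,)$ is unchanged when the defining form is multiplied by a nonzero factor, I would clear denominators by one suitable monomial in $u_j$, $s_1^{(j)}$, $s_2^{(j)}$ and the square roots occurring in the $m_k^{(j)}$, and check that the outcome is exactly the pair $(C_{(j)},L_{(j)})$ displayed in the statement. So the proof reduces to inserting the explicit values of $u_j$ (from \eqref{u^(a,i)_j}), of $s_k^{(j)}$, and of $m_k^{(j)}$ into these two expressions and simplifying.

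To cut down the bookkeeping I would first use symmetry. The charts $U_1,\dots,U_7$ of $\tilde U$ and all the attached data $(u_j,s_1^{(j)},s_2^{(j)},m_1^{(j)},m_2^{(j)})$ are permuted by the involution exchanging $x_1\leftrightarrow x_2$ together with $U_j\leftrightarrow U_{9-j}$ for $j=2,3,4$, while $U_1$ is fixed by it; this is nothing but the $s_1\leftrightarrow s_2$ symmetry visible in the coordinates listed in Step~2. Hence it is enough to verify the formulas for $j=1,2,3,4$, the cases $j=5,6,7$ following by applying this involution and relabelling, which is precisely the statement's ``replace $x_1$ with $x_2$ and $v^{(9-j)}_k$ with $v^{(j)}_k$'' clause. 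The case $j=1$ is immediate: $M_1={\rm Diag}(1,u_1,u_1)$, and multiplying $F_{(1)}(M_1^{-1}\cdot x)$ by $u_1^2$ gives $u_1^3x_0^3+x_1^3+x_2^3-3x_0x_1x_2$ with $u_1=v^{(1)}_0$, while $S_{(1)}(M_1^{-1}\cdot x)$ is already $x_0+v^{(1)}_1x_1+v^{(1)}_2x_2$. For $j=2,3,4$ I would run the same computation: the cube terms acquire the factor $(m_k^{(j)})^{-3}$ and the $x_0x_1x_2$ term acquires $(m_1^{(j)}m_2^{(j)})^{-1}$, and the clearing monomial must be chosen so that all four coefficients simultaneously become regular on $U_j$ and collapse to the stated normal form; for $j=2,3$ that monomial involves $(v^{(j)}_1)^{1/2}$, respectively $(v^{(j)}_2)^{1/2}$, which is the source of the exponents $3/2$ and $1/2$ appearing in $C_{(2)}$ and $C_{(3)}$.

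One point I would address explicitly is the meaning of those square roots: $m_1^{(2)}=(u_2 s_2^{(2)})^{1/2}$ and the analogous quantities for $j=3,6,7$ are defined only up to sign, so $M_j$ is determined only up to right multiplication by ${\rm Diag}(1,-1,1)$ or ${\rm Diag}(1,1,-1)$. But that factor lies in ${\rm PGL}(3)$ and $BP_{1,3}=W/{\rm PGL}(3)$, so the image point $\tilde\varphi_j(v^{(j)}_0,v^{(j)}_1,v^{(j)}_2)$ is independent of the choice, and the displayed formulas for $(C_{(j)},L_{(j)})$ describe this point unambiguously. (Checking that each $(C_{(j)},L_{(j)})$ genuinely lies in $W$, i.e.\ that $C_{(j)}$ is reduced with at worst nodes and $L_{(j)}$ avoids its singular points, is where the localisation $1/(u_j^3-1)$ in $U_j$ enters; but this is not required for the present statement and belongs to the subsequent gluing step.) I expect no conceptual obstacle here: the definitions of $u_j$, $s_k^{(j)}$ and $m_k^{(j)}$ were engineered exactly so that the denominators clear cleanly, so the only real task is to carry out the four monomial-bookkeeping computations for $j=1,2,3,4$ carefully, without slips in the exponents.
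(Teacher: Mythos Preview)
Your proposal is correct and matches the paper's approach exactly: the paper introduces the proposition with the phrase ``By simple calculations, we can see them specifically as follows'' and gives no further argument, so the intended proof is precisely the direct substitution-and-clear-denominators computation you outline. Your remark on the square-root ambiguity also anticipates the paper's own note immediately after the proposition, which resolves it via the transformation $x_1\mapsto -x_1$.
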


Note that $\left( C_{(2)}, L_{(2)} \right)$ 
(resp. $\left( C_{(3)}, L_{(3)} \right)$) is independent of a choice of a square root of $v_1^{(2)}$ (resp. $v_2^{(3)}$). 
In fact, we obtain same pairs in ${BP}_{1, 3}$ under the transformation $x_1 \mapsto - x_1$. 

We can show that the pairs in Proposition \ref{varphi} are contained in ${BP}_{1,3}$. 
Here we only consider in the case that $j=4$. 
The other cases can be checked by same arguments. 
If $v_1^{(4)} = 0$, then $C_{(4)}$ is a $3$-gon $V(x_0 x_1 x_2)$ and its singularities 
$$
p_0 := (1 : 0 : 0), \ p_1 := (0 : 1 : 0), \ p_2 := (0 : 0 : 1)
$$
are not contained in $L_{(4)} = V (x_0 + x_1 + x_2)$, and hence 
$\left( C_{(4)}, L_{(4)} \right) \in {BP}_{1,3}$. 
If $v_1^{(4)} \not = 0$ and $v_2^{(4)} = 0$, then we have 
$C_{(4)} = L' + Q$, $L' = V(x_1)$ and $Q = V\left( v^{(4)}_1 x_1^2 - 3 x_0 x_2 \right)$. 
Thus $\left( C_{(4)}, L_{(4)} \right) \in {BP}_{1,3}$, 
since $L_{(4)}$ does not pass through singular points $p_0$ and $p_2$ of $C_{(4)}$. 
If $v_1^{(4)} v_2^{(4)} \not = 0$ and $v_0^{(4)} = 0$, then 
$C_{(4)}$ is a nodal cubic 
$
V \left( v^{(4)}_1 \left( x_1^3 + \left( v^{(4)}_2 \right) ^3 x_2^3 \right) - 3 x_0 x_1 x_2
\right)
$ 
with the nodal point $p_0$ and $L_{(4)}$ does not pass through $p_0$. Hence 
$\left( C_{(4)}, L_{(4)} \right) \in {BP}_{1,3}$. 
If $v_0^{(4)} v_1^{(4)} v_2^{(4)} \not = 0$, then by the transformation 
$x_0 = y_0$, $x_1 = v^{(4)}_0 v^{(4)}_2 y_1$, $x_2 = v^{(4)}_0 y_2$, we obtain 
$
C_{(4)} : v^{(4)}_0 v^{(4)}_1 \left( v^{(4)}_2 \right)^2 (y_0^3 + y_1^3 + y_2^3 ) = 3 y_0 y_1 y_2
$ 
and 
$
L_{(4)} : y_0 + v^{(4)}_2 v^{(4)}_0 y_1 + v^{(4)}_0 y_2 = 0$. 
Then we have $\left( v^{(4)}_0 v^{(4)}_1 \left( v^{(4)}_2 \right)^2 \right)^3 \not = 1$ 
by the definition of $U_4$ and (\ref{u^(a,i)_j}). 
Thus $C_{(4)}$ is a nonsingular cubic curve, 
and hence $\left( C_{(4)}, L_{(4)} \right) \in {BP}_{1,3}$. 

\begin{Prop} \label{phi on U}
For any $j$, $j' \in \{ 1, \dots, 7\}$, we have that 
$\tilde{\varphi}_j = \tilde{\varphi}_{j'}$ on $U_j \cap U_{j'}$, and 
$\tilde{\varphi}_j = \varphi \circ \pi$ on $U_j \setminus \tilde{E}$, 
where $\tilde{E}$ is the exceptional set of $\pi$. 
In particular, there exists a morphism $\tilde{\varphi} : \tilde{U} \to BP_{1,3}$ such that 
$\varphi \circ \pi = \tilde{\varphi}$ on $\tilde{U} \setminus \tilde{E}$. 
\end{Prop}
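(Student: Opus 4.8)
The plan is to establish the Proposition chart by chart and then glue, exploiting that $BP_{1,3}$ is a \emph{separated} algebraic space. Throughout write $u=u^{(0)}$, $s_1=s^{(0)}_{1,0}$, $s_2=s^{(0)}_{2,0}$ for the coordinates of $U=U^{(0,0)}$, and recall that $\pi\colon\tilde U\to U$ is an isomorphism over $U\setminus(L_1\cup L_2)$, where $L_1\cup L_2$ is the base locus of $\varphi$ on $U$, and that $\tilde E=\pi^{-1}(L_1\cup L_2)$.

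\emph{Step 1: each $\tilde\varphi_j$ is a morphism $U_j\to BP_{1,3}$.} By Proposition~\ref{varphi} the coefficients of the pair $(C_{(j)},L_{(j)})=\tilde\varphi_j(v^{(j)})$ are polynomials in the coordinates $v^{(j)}_0,v^{(j)}_1,v^{(j)}_2$ of $U_j$: the square roots in $M_j$ enter only through their squares, and the two sign choices are interchanged by $x_1\mapsto -x_1\in{\rm PGL}(3)$, hence give the same point of $BP_{1,3}$ (the Remark following Proposition~\ref{varphi}). The case analysis carried out for $j=4$ before the statement, together with its analogues for the other $j$, shows $(C_{(j)},L_{(j)})\in W$ for every $v^{(j)}\in U_j$. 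So $\tilde\varphi_j$ is the composite of a morphism $U_j\to W$ with the quotient morphism $W\to BP_{1,3}$.

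\emph{Step 2: $\tilde\varphi_j=\varphi\circ\pi$ on the dense open $D_j:=\{u_j\neq 0\}\subseteq U_j$.} Directly from \eqref{u^(a,i)_j} and the prescriptions for $s^{(j)}_1,s^{(j)}_2$ one checks that $\pi^{*}u=u_j$ and $\pi^{*}s_i=s^{(j)}_i$ as regular functions on $U_j$; since $L_1\cup L_2\subseteq\{u=0\}$ this forces $\tilde E\cap U_j\subseteq\{u_j=0\}$, so $D_j\subseteq U_j\setminus\tilde E$ and there $\pi$ is an isomorphism onto an open subset of $U\setminus(L_1\cup L_2)$ on which $\varphi$ is regular; also $M_j\in{\rm PGL}(3)$ is invertible on $D_j$ (its entries are monomials in the $v^{(j)}_k$ dividing $u_j$). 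On $D_j$ the pair $(V(F_{(j)}),V(S_{(j)}))$ is exactly the Hesse cubic of parameter $u_j=\pi^{*}u$ together with the line $x_0+\pi^{*}s_1\,x_1+\pi^{*}s_2\,x_2=0$, i.e.\ $(V(F_{(j)}),V(S_{(j)}))=\varphi(\pi(v))\in W$; since ${\rm PGL}(3)$ acts trivially on $BP_{1,3}$ and $W$ is ${\rm PGL}(3)$-invariant, $\tilde\varphi_j(v)=[\,M_j\cdot(V(F_{(j)}),V(S_{(j)}))\,]=\varphi(\pi(v))$.

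\emph{Step 3: gluing.} $BP_{1,3}$ is a separated algebraic space (Lemma~\ref{key of BP13} together with the Keel--Mori theorem), and each of $U_j$, $U_j\cap U_{j'}$, $U_j\setminus\tilde E$ is a reduced irreducible scheme in which $D_j$ (respectively $D_j\cap D_{j'}$, respectively $D_j$) is a nonempty, hence schematically dense, open subscheme. Two morphisms to a separated algebraic space out of such a scheme agreeing on a dense open subscheme coincide. Applying this to $\tilde\varphi_j$ and $\varphi\circ\pi$ on $U_j\setminus\tilde E$ (both restrict to $\varphi\circ\pi$ on $D_j$ by Step~2, and $\tilde\varphi_j$ is a morphism there by Step~1) yields $\tilde\varphi_j=\varphi\circ\pi$ on $U_j\setminus\tilde E$; applying it to $\tilde\varphi_j$ and $\tilde\varphi_{j'}$ on $U_j\cap U_{j'}$ (both equal $\varphi\circ\pi$ on $D_j\cap D_{j'}$) yields $\tilde\varphi_j=\tilde\varphi_{j'}$ there. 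Hence the $\tilde\varphi_j$ glue to a morphism $\tilde\varphi\colon\tilde U\to BP_{1,3}$ with $\varphi\circ\pi=\tilde\varphi$ on $\tilde U\setminus\tilde E$. The one place with genuine content is the identity $\pi^{*}u=u_j$, $\pi^{*}s_i=s^{(j)}_i$ of Step~2: verifying that the exponents in \eqref{u^(a,i)_j} and the formulas for $s^{(j)}_i$ and $m^{(j)}_i$ are precisely those produced by the successive blow-ups on all seven charts, and that conjugation by $M_j$ sends the possibly non-$W$ pair $(V(F_{(j)}),V(S_{(j)}))$ to the pair of Proposition~\ref{varphi} lying in $W$. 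This is a bounded but delicate computation; it is exactly the need to cross the divisor $\{u_j=0\}$, where the $M_j$-twist rather than $\varphi\circ\pi$ itself carries the pair into $W$, that forces the detour through $W$ and the appeal to separatedness instead of a naive extension of $\varphi$.
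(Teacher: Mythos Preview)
Your argument is correct and takes a genuinely different route from the paper's proof. The paper establishes $\tilde\varphi_j=\varphi\circ\pi$ on all of $U_j\setminus\tilde E$ \emph{directly}, by observing that $\det M_j$ is invertible there (not merely on your smaller open $D_j=\{u_j\neq 0\}$), so that $M_j\cdot(V(F_{(j)}),V(S_{(j)}))$ and $(V(F_{(j)}),V(S_{(j)}))$ represent the same class in $BP_{1,3}$. For the gluing on $U_j\cap U_{j'}$, the paper then checks by hand, chart by chart on the exceptional locus $\tilde E$, that the explicit pairs of Proposition~\ref{varphi} are ${\rm PGL}(3)$--equivalent (the sample computation for $(j,j')=(1,2)$ is given, the others being declared similar). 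You bypass both of these computations by working on the dense open $D_j$ and invoking separatedness of $BP_{1,3}$ (Lemma~\ref{key of BP13} plus Keel--Mori): two morphisms from a reduced scheme to a separated algebraic space agreeing on a dense open coincide. This is cleaner and scales without further case analysis; the paper's approach, on the other hand, is more elementary in that it does not lean on the structural input that $BP_{1,3}$ is separated, and its explicit transition formulas on $\tilde E$ feed into later computations such as Remark~\ref{structure of BP}.
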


\begin{proof}
By definition, 
$\tilde{\varphi}_j \left( v^{(j)}_0, v^{(j)}_1, v^{(j)}_2 \right) 
= M_j \cdot \left( V \left(F_{(j)} (x) \right), V \left( S_{(j)} (x) \right) \right)$ $(1 \leq j \leq 7)$. 
For each $j$, we can check easily that $M_j$ has nonzero determinant on 
$U_j \setminus \tilde{E}$. 
Moreover we have 
$\pi \left( v^{(j)}_0, v^{(j)}_1, v^{(j)}_2 \right) = \left( u_j, s^{(j)}_1, s^{(j)}_2 \right)$. 
Hence on $U_j \setminus \tilde{E}$, we have that 
\begin{align*}
BP_{1,3} \ni \tilde{\varphi}_j \left( v^{(j)}_0, v^{(j)}_1, v^{(j)}_2 \right) 
&= M_j^{-1} \cdot \tilde{\varphi}_j \left( v^{(j)}_0, v^{(j)}_1, v^{(j)}_2 \right) \\
&= \left( V \left(F_{(j)} (x) \right), V \left( S_{(j)} (x) \right) \right) \\
&= \varphi \left( (u_j : 1) \times \left( 1 : s^{(j)}_1 : s^{(j)}_2 \right) \right) \\
&= \varphi \circ \pi \left( v^{(j)}_0, v^{(j)}_1, v^{(j)}_2 \right). 
\end{align*}
Therefore we obtain 
$\tilde{\varphi}_j = \varphi \circ \pi$ on $U_j \setminus \tilde{E}$ ($1 \leq j \leq 7$). 
In particular, we obtain 
$\tilde{\varphi}_j = \tilde{\varphi}_{j'}$ on $\left( U_j \cap U_{j'} \right) \setminus \tilde{E}$ 
for any $j$ and $j'$. 

On the other hand, for any $j$ and $j'$, we can check easily that 
$\tilde{\varphi}_j = \tilde{\varphi}_{j'}$ on $U_j \cap U_{j'} \cap \tilde{E}$. 
For example, when $j = 1$ and $j' = 2$, the intersection $U_1 \cap U_2 \cap \tilde{E}$ 
is identified with 
\begin{align*}
\left. 
 U_1 \setminus V \left( v^{(1)}_2 \right) \middle| _{\left( v^{(1)}_0 = 0 \right)} 
\right. 
&\overset{\sim}{\longrightarrow}
\left. 
 U_2 \setminus V \left( v^{(2)}_2 \right) \middle| _{\left( v^{(2)}_0 = 0 \right)} 
\right. , \\ 
\left( 0, v^{(1)}_1, v^{(1)}_2 \right) 
&\longmapsto 
\left( 0, \frac{1}{v^{(1)}_2}, v^{(1)}_1 \right). 
\end{align*} 
Then we have 
\begin{align*}
\tilde{\varphi}_2 \left( 0, \frac{1}{v^{(1)}_2}, v^{(1)}_1 \right) 
&= \left\{
\begin{array}{l} 
C_{(2)} : x_1^3 + \left( v^{(1)}_2 \right)^{- 3/2} x_2^3 = 3 x_0 x_1 x_2, \\
L_{(2)} : x_0 + \frac{v^{(1)}_1}{\left( v^{(1)}_2 \right) ^{1/2}} x_1 + x_2 = 0 ,
\end{array} \right. \\
&= {\rm Diag} \left( 1, \left( v^{(1)}_2 \right)^{-1/2}, \left( v^{(1)}_2 \right)^{-1} \right) 
\cdot \tilde{\varphi}_1 \left( 0, v^{(1)}_1, v^{(1)}_2 \right). 
\end{align*}
Hence we have 
$\tilde{\varphi}_2 \left( 0, 1/v^{(1)}_2, v^{(1)}_1 \right) = \tilde{\varphi}_1 \left( 0, v^{(1)}_1, v^{(1)}_2 \right)$ in ${BP}_{1, 3}$, and hence we obtain 
$\tilde{\varphi}_1 = \tilde{\varphi}_2$ on $U_1 \cap U_2 \cap \tilde{E}$. 
Similarly, for the other pairs $(j, j')$, we can prove that $\tilde{\varphi}_j = \tilde{\varphi}_{j'}$ on $U_j \cap U_{j'} \cap \tilde{E}$. 
Hence we obtain 
$\tilde{\varphi}_j = \tilde{\varphi}_{j'}$ on $U_j \cap U_{j'}$. 
\end{proof}

{\bf Step 4.} 
We construct morphisms 
$\pi : \tilde{X} \to X$ and 
$\tilde{\varphi} : \tilde{X} \to BP_{1,3}$ such that 
$\pi$ is a composition of blowing-ups and 
$\tilde{\varphi}$ is an extension of $\varphi$. 
Similarly to Steps 2-3, for each affine open subset $U^{(a, i)}$ of $X$ 
($a = 0$ or $a^3 = 1$, and $0 \leq i \leq 2$), 
we can construct explicitly a morphism
$\pi^{(a, i)} : \tilde{U}^{(a, i)} \rightarrow U^{(a, i)}$ which is the composition 
of blowing-ups $\pi^{(a, i)}_1$, $\pi^{(a, i)}_2$ and $\pi^{(a, i)}_3$, and a morphism 
$\tilde{\varphi}^{(a, i)} : \tilde{U}^{(a, i)} \rightarrow {BP}_{1, 3}$ such that 
$\varphi \circ \pi^{(a, i)} = \tilde{\varphi}^{(a, i)}$ on $\tilde{U}^{(a, i)} \setminus \tilde{E}^{(a,i)}$, 
where $\tilde{E}^{(a, i)}$ is the exceptional set of $\pi^{(a, i)}$. 
We denote by $\pi : \tilde{X} \to X$ the scheme obtained by gluing the schemes $\pi^{(a, i)} : \tilde{U}^{(a, i)} \to U^{(a, i)}$ and $U^{\infty} = \bigcup U^{(\infty, i)}$, that is, 
$\pi : \tilde{X} \to X$ is the composition of blowing-ups 
\begin{align*} \label{blowing-up1}
X \overset{\pi_1}{\longleftarrow } 
B_{Z_1} (X) \overset{\pi_2}{\longleftarrow } 
B_{Z_2} B_{Z_1} (X) \overset{\pi_3}{\longleftarrow } 
B_{Z_3} B_{Z_2} B_{Z_1} (X) = \tilde{X}, 
\end{align*}
where 
$Z_1$ consists of $12$ origins $O^{(a, i)}$ of $U^{(a, i)}$ 
($a = 0$ or $a^3 = 1$, and $0 \leq i \leq 2$), 
$Z_2$ is the disjoint union of $12$ proper transforms of the base locus of $\varphi$ under $\pi_1$, and 
$Z_3$ is the disjoint union of $12$ lines obtained by gluing centers of blowing-ups 
$\pi^{(a, i)}_3$. 
By simple calculations, we can check that 
$\tilde{\varphi}^{(a, i)} = \tilde{\varphi}^{(a', i')}$ on $\tilde{U}^{(a, i)} \cap \tilde{U}^{(a', i')}$ 
for any $(a, i)$ and $(a', i')$. 
Moreover $\varphi$ is generically finite of degree $216$ by Lemma \ref{natural maps}, 
and $\pi$ is of degree $1$. 
Therefore we obtain 

\begin{Th} \label{main_1}
There exists 
a morphism $\tilde{\varphi} : \tilde{X} \to {BP}_{1,3}$ such that 
$\varphi \circ \pi = \tilde{\varphi}$ on $\tilde{X} \setminus \tilde{E}$, 
where $\tilde{E}$ is the exceptional set of $\pi : \tilde{X} \to X$. 
In particular, $\tilde{\varphi}$ is generically finite of degree $216$. 
\end{Th}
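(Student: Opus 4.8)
The plan is to assemble $\tilde\varphi$ from the local morphisms already produced in Steps~1--4 and then to read off the degree. Recall that $\pi : \tilde X \to X$ restricts to an isomorphism over $U^{\infty} = \bigcup_i U^{(\infty,i)}$ and, over each chart $U^{(a,i)}$ with $a \ne \infty$, coincides with the explicit composition of blow-ups $\pi^{(a,i)} : \tilde U^{(a,i)} \to U^{(a,i)}$. Accordingly I would define $\tilde\varphi$ piecewise: over $\pi^{-1}(U^{\infty})$ set $\tilde\varphi := \varphi \circ \pi$, which is legitimate since $\varphi$ is already a morphism on $U^{\infty}$ by Lemma~\ref{natural maps}(i); over each $\tilde U^{(a,i)}$ take the morphism $\tilde\varphi^{(a,i)} : \tilde U^{(a,i)} \to BP_{1,3}$ furnished by Step~4 (Proposition~\ref{phi on U} in the model case $(a,i)=(0,0)$, and its verbatim analogues otherwise), which by Proposition~\ref{varphi} sends every point to a pair lying in $W$, hence defines a morphism to $BP_{1,3}$ on all of $\tilde U^{(a,i)}$, and which satisfies $\tilde\varphi^{(a,i)} = \varphi \circ \pi^{(a,i)}$ off the exceptional set $\tilde E^{(a,i)}$.

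The one thing left to verify is that these pieces glue, i.e. that on each intersection of two of their domains the two definitions agree. Off the total exceptional set $\tilde E$, every piece restricts to $\varphi \circ \pi$, so any two agree there. Since $BP_{1,3}$ is separated and $\tilde X$ is reduced and irreducible (a blow-up of $\mathbb{P}^1 \times \mathbb{P}^2$), two morphisms from an open subset of $\tilde X$ to $BP_{1,3}$ that agree on the dense open complement of $\tilde E$ must agree everywhere; hence the pieces agree on their overlaps and glue to a single morphism $\tilde\varphi : \tilde X \to BP_{1,3}$ with $\varphi \circ \pi = \tilde\varphi$ on $\tilde X \setminus \tilde E$. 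Equivalently, one may verify the agreement along the exceptional part directly by the coordinate computations of Step~4, patterned on the case $(j,j')=(1,2)$ treated in the proof of Proposition~\ref{phi on U}: identify the two charts over their common locus via the blow-up transition maps, substitute into the formulas of Proposition~\ref{varphi}, and exhibit the explicit diagonal element of ${\rm PGL}(3)$ conjugating one pair $(C,L)$ into the other.

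For the degree statement, $\pi$ is a composition of blow-ups, hence birational, of degree $1$, while $\varphi : X \dashrightarrow BP_{1,3}$ is generically finite of degree $216$ by Lemma~\ref{natural maps}(v). Since $\tilde\varphi$ agrees with $\varphi \circ \pi$ on the dense open set $\tilde X \setminus \tilde E$, it has the same generic degree, so $\tilde\varphi$ is generically finite of degree $216$.

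I expect the real work to be concentrated in the gluing step, more precisely in the verification --- already packaged in Step~4 and in Proposition~\ref{phi on U} --- that the explicit pairs $(C_{(j)}, L_{(j)})$ of Proposition~\ref{varphi} genuinely lie in $W$ even along $\tilde E$ (the case analysis carried out there for $j=4$, together with its analogues for the other charts and other $U^{(a,i)}$), and in the bookkeeping of which of the seven charts of $\tilde U^{(a,i)}$ meets which chart of $\tilde U^{(a',i')}$ with the corresponding transition maps. No individual computation is hard, but there are many cases, and the square-root ambiguities in the matrices $M_j$ (the entries $m^{(j)}_1, m^{(j)}_2$ for $j = 2,3,6,7$) must be dispatched consistently, using the remark after Proposition~\ref{varphi} that the resulting point of $BP_{1,3}$ is independent of the chosen square root.
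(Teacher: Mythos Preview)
Your proposal is correct and follows essentially the same architecture as the paper: construct the local morphisms $\tilde\varphi^{(a,i)}$ via Steps~1--3 (Proposition~\ref{phi on U} and its analogues), glue them together with $\varphi\circ\pi$ over $U^{\infty}$, and read off the degree from Lemma~\ref{natural maps}(v) and the birationality of $\pi$.

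The one genuine difference is in the gluing step. The paper verifies $\tilde\varphi^{(a,i)} = \tilde\varphi^{(a',i')}$ on overlaps by direct coordinate computation in every case, following the pattern of the $(j,j')=(1,2)$ calculation in Proposition~\ref{phi on U}. You instead invoke that $BP_{1,3}$ is a \emph{separated} algebraic space (established in Section~3 via Keel--Mori) and that $\tilde X$ is reduced and irreducible, so two morphisms to $BP_{1,3}$ agreeing on the dense open $\tilde X\setminus\tilde E$ agree everywhere. This is cleaner and eliminates the case-by-case bookkeeping you flag at the end; the paper's explicit calculations then serve only to confirm that each $\tilde\varphi^{(a,i)}$ really is a morphism on all of $\tilde U^{(a,i)}$ (i.e., that the pairs land in $W$ along $\tilde E$), not to match them across charts. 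Your observation that the square-root ambiguities in $M_j$ are harmless is exactly what the paper records after Proposition~\ref{varphi}, so nothing is missing there.
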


\begin{Rem} \label{structure of BP}
By the morphism $\tilde{\varphi} : \tilde{X} \rightarrow {BP}_{1, 3}$, 
we see a rough structure of ${BP}_{1, 3}$ in the following sense: 
By construction, each $\tilde{U}^{(a, i)}$ has an affine open covering 
$\tilde{U}^{(a, i)} = \bigcup_{j = 1}^{7} U^{(a, i)}_j$. 
Recall that $\pi^{(a, i)} : \tilde{U}^{(a, i)} \to U^{(a, i)}$ is a composition of blowing-ups 
$\pi^{(a, i)}_1$, $\pi^{(a, i)}_2$ and $\pi^{(a, i)}_3$. 
Let 
$E^{(a, i)}_\ell$ be the exceptional set of $\pi^{(a,i)} _\ell$ and 
$\tilde{E}^{(a, i)}_\ell$ (resp. $\tilde{E}^{(a, i)}_0$) be the proper transform of 
$E^{(a, i)}_\ell$ (resp. $V \left( u^{(a)} \right)$) under $\pi^{(a, i)}$. 
For example, when $(a, i) = (0, 0)$, 
\begin{gather*} 
\tilde{E}^{(0, 0)}_0 = V\left( v^{(4)}_1 \right) \cup V\left( v^{(5)}_1 \right), \ \ 
\tilde{E}^{(0, 0)}_1 = \bigcup_{j = 1}^{7} V\left( v^{(j)}_0 \right), 
\\
%\end{gather*}
%\begin{gather*} 
\tilde{E}^{(0, 0)}_2 = V\left( v^{(2)}_1 \right) \cup V\left( v^{(3)}_2 \right) \cup V\left( v^{(6)}_2 \right) \cup V\left( v^{(7)}_1 \right) , 
\\
%\end{gather*}
%\begin{gather*} 
\tilde{E}^{(0, 0)}_3 = V \left( v^{(3)}_1 \right) \cup V\left( v^{(4)}_2 \right) \cup V\left( v^{(5)}_2 \right) \cup V\left( v^{(6)}_1 \right), 
\end{gather*}
where $\left\{ v^{(j)}_0, v^{(j)}_1, v^{(j)}_2 \right\}$ is the coordinate of $U_j$ defined in Step 2. 
Let $\tilde{E}_{\ell} = \bigcup \tilde{E}^{(a, i)}_\ell $ ($0 \leq \ell \leq 3$). 
For any $v \in \tilde{X}$, we put $\tilde{\varphi} (v) = (C, L) \in {BP}_{1, 3}$. 
Then by simple calculations, 
\begin{align*} 
C \ {\rm is} \ \left\{ 
\begin{array}{ll} 
{\rm a \ } 3 {\rm \mathchar`- gon} & {\rm if} \ v \in \tilde{E}_0, \\
{\rm an \ irreducible \ conic \ plus \ a \ line} & {\rm if} \ v \in \left( \tilde{E}_2 \cup \tilde{E}_3 \right) \setminus \tilde{E}_0, \\
{\rm a \ nodal \ cubic} & {\rm if} \ v \in \tilde{E}_1 \setminus \left( \tilde{E}_0 \cup \tilde{E}_2 \cup \tilde{E}_3 \right) , \\
{\rm a \ smooth \ cubic} & {\rm if} \ v \in \tilde{X} \setminus \left( \tilde{E}_0 \cup \tilde{E}_1 \cup \tilde{E}_2 \cup \tilde{E}_3 \right) . 
\end{array} \right.
\end{align*}
In fact, this is clear on $\tilde{U} = \tilde{U}^{(0,0)}$ by Proposition \ref{varphi}. 
\end{Rem}

\subsection{Constructions of morphisms $p$ and $\hat{\psi}$} 

Recall that 
$\pi_1 : B_{Z_1} (X) \rightarrow X$ is the blowing-up at $Z_1$ consisting of $12$ origins $O^{(a, i)}$ of $U^{(a, i)}$ ($a = 0$ or $a^3=1$, and $0 \leq i \leq 2$). 
Thus by Lemma \ref{natural maps}, (iii), 
the proper transforms $\tilde{A}^{(j)}_i$ of $A^{(j)}_i$ under $\pi_1$ are disjoint each other and do not intersect with the center of the blowing-up 
$\pi_2 : B_{Z_2} B_{Z_1} (X) \rightarrow B_{Z_1} (X)$. 
Hence the proper transform of  each $A^{(j)}_i$ under $\pi : \tilde{X} \to X$ 
is isomorphic to $\tilde{A}^{(j)}_i$. 
By the proof of Lemma \ref{natural maps}, (ii), we have $\varphi^{-1} (W_{\rm T}) = \bigcup A_i^{(j)}$, where $W_{\rm T}$ is the subset of pairs $(C, L) \in BP_{1,3}$ consisting of a cubic curve $C$ and a line $L$ which is $3$-tangent to $C$. 
Hence we obtain  
\begin{align} \label{inverse image of WT}
\tilde{X} \supset \tilde{\varphi}^{-1} \left( W_{\rm T} \right) = \tilde{A} := 
\coprod_{0 \leq i, j \leq 2} \tilde{A}^{(j)}_i. 
\end{align}
On the other hand, by Lemma \ref{natural maps}, (ii), the base locus of $\psi$ consists of the center of $\pi$ and nine lines $A^{(j)}_i$, hence 
similarly to the definition of $\tilde{\varphi}$, we can define a rational map 
$\tilde{\psi} : \tilde{X} \rightarrow \overline{P}_{1, 3}$ with base locus $\tilde{A}$. 
Note that 
$\tilde{\varphi} = f \circ \tilde{\psi}$ on $\tilde{X} \setminus \tilde{A}$, 
where $f$ is the birational in Theorem \ref{main1}. 
In particular, 
$\psi \circ \pi = \tilde{\psi}$ on $\tilde{X} \setminus \left( \tilde{E} \cup \tilde{A} \right)$, 
and hence $\tilde{\psi}$ is generically finite of degree $216$. 

In the following Steps 1-4, we construct morphisms 
$p : \hat{X} \to \tilde{X}$ and $\hat{\psi} : \hat{X} \to \overline{P}_{1,3}$ 
such that $p$ is a composition of blowing-ups with the center $\tilde{A}$ and 
$\hat{\psi}$ is an extension of $\tilde{\psi}$. \\

{\bf Step 1.} 
We retake an affine open covering of $\tilde{X}$ such that 
the base locus of $\tilde{\psi}$ on each subset is empty or one axis. 
Recall that $\tilde{X}$ has the affine open covering 
$$
\tilde{X} = \bigcup _{0 \leq i \leq 2} U^{(\infty, i)} \cup \bigcup_{\substack{a=0 \ {\rm or} \ a^3=1, \\ 0 \leq i \leq 2, \ 1 \leq \ell \leq 7}} U^{(a, i)}_\ell .
$$
For $a = 0$ or $a^3 = 1$, and $i$, $j \in \{ 0, 1, 2\}$, let 
\begin{gather*}
\alpha ^{(a, i, j )}_1 = \frac{s^{(a)}_{[i+1], i}}{u^{(a)}} - \zeta_3^{2 j} \frac{s^{(a)}_{[i+2], i}}{u^{(a)}}, \ \ 
\alpha ^{(a, i, j)}_2 = \frac{s^{(a)}_{[i+2], i}}{u^{(a)}} - \zeta_3^{2 j} , \\ 
\alpha ^{(\infty, i, j)}_1 = s^{(\infty)}_{[i+1], i} - \zeta_3^j , \ \ \alpha ^{(\infty, i, j)}_2 = s^{(\infty)}_{[i+2], i} - \zeta_3^{2 j} u^{(\infty)} , 
\end{gather*}
where $[\ell] = \ell \mod 3 \in \{ 0, 1, 2 \}$. 
Then $U^{(a, i)}_1$ (resp. $U^{(\infty, i)}$) has a local coordinate 
$\left\{ u^{(a)}, \alpha ^{(a, i, j)}_1, \alpha ^{(a, i, j)}_2\right\}$
(resp. $\left\{ u^{(\infty)}, \alpha ^{(\infty, i, j)}_1, \alpha ^{(\infty, i, j)}_2 \right\}$). 
We define open subsets 
$V^{(a, i, j)}$ ($a=0$, $\infty$ or $a^3=1$, and $0 \leq i, j \leq 2$) and 
$V^{(a, i)}_\ell$ ($a=0$ or $a^3=1$, $0 \leq i \leq 2$, and $2 \leq \ell \leq 7$) of $\tilde{X}$ as follows: 
\begin{align*}
V^{(a, i, j)} &= {\rm Spec} \left( k \left[ u^{(a)}, \alpha ^{(a, i, j)}_1, \alpha ^{(a, i, j)}_2, \frac{1}{\left( u^{(a)} \right) ^3 - 1} \right] \right) 
\setminus {\mathcal C}^{(a, i, j)} , 
\\
%\end{align*} 
%\begin{align*}
V^{(a, i)}_\ell &= U^{(a, i)}_\ell \setminus \bigcup_{0 \leq s, t \leq 2} \tilde{A}^{(t)}_s ,
\end{align*}
where ${\mathcal C}^{(a, i, j)} := \bigcup_{(s, t) \not = (i_a , j_a)} \tilde{A}_s^{(t)} \subset \tilde{X}$ and $(i_a, j_a)$ is defined by 
\begin{eqnarray*} 
\left( i_a , j_a \right)  = \left\{
\begin{array}{l l} 
\left( i, j  \right) & (a = 0), \\
\left( j, [2 k (j + 1) + 2 i] \right) & (a = \zeta_3^k , \ 0 \leq k \leq 2), \\
\left( [i+2], j \right) & (a = \infty) . \\
\end{array} \right.
\end{eqnarray*}
Then $\tilde{X}$ has an affine open covering 
$$
\tilde{X} = 
\bigcup_{\substack{a \in \{ 0, 1, \zeta _3, \zeta_3^2, \infty \}, \\ 0 \leq i, j \leq 2}} V^{(a, i, j)}  
\cup \bigcup_{\substack{a=0 \ {\rm or} \ a^3=1, \\ 0 \leq i \leq 2, \ 2 \leq \ell \leq 7}} V^{(a, i)}_\ell ,
$$
and $\tilde{\psi}$ is well-defined on $\bigcup V^{(a, i)}_\ell$ and the base locus of 
$\tilde{\psi}$ on each $V^{(a, i, j)}$ is the $u^{(a)}$-axis, that is,  
$$
V^{(a, i, j)} \cap \tilde{A}^{(j_a)}_{i_a} = V \left( \alpha ^{(a, i, j)}_1, \alpha ^{(a, i, j)}_2 \right) \subset V^{(a, i, j)} .
$$ 
Note that these base loci are glued as follows:
\begin{eqnarray*} 
\tilde{A}^{(j)}_i 
=
V \left( \alpha ^{(0, i, j)}_1, \alpha ^{(0, i, j)}_2 \right) 
&\cup & 
\bigcup_{0 \leq k \leq 2} 
V \left( \alpha ^{(\zeta_3^k, [2 j + (2 - i) k], i)}_1, \alpha ^{(\zeta_3^k, [2 j + (2 - i) k], i)}_2 \right) \nonumber \\
&\cup & V \left( \alpha ^{(\infty, [i+1], j)}_1, \alpha ^{(\infty, [i+1], j)}_2 \right) .
\end{eqnarray*}

{\bf Step 2.} 
We blow up each $V^{(a, i, j)}$ along the base locus of $\tilde{\psi}$. 
We give a construction in detail in the case of $V^{(0,0,0)}$, since the other cases are similar. 
For simplicity, we put $(u, \alpha_1, \alpha_2) = 
(u^{(0)}, \alpha^{(0,0,0)} _1, \alpha^{(0,0,0)} _2)$ 
and $V = V^{(0,0,0)}$. 
Note that by using notations in Subsection \ref{to BP}, we have 
$V \subset U_1$ and $\alpha _1 = \frac{s_1}{u} - \frac{s_2}{u}$, 
$\alpha_2 = \frac{s_2}{u} - 1$. 
In particular, by Proposition \ref{varphi}, $\tilde{\psi}$ is given by 
\begin{gather*}
\tilde{\psi} (u, \alpha_1, \alpha_2) = 
\left( V \left( \tilde{F} (x) \right), V \left( \tilde{S} (x) \right) \right), \ 
\tilde{F} (x) := u^3 x_0^3 + x_1^3 + x_2^3 - 3 x_0 x_1 x_2, \label{general F} \\
\tilde{S} (x) := x_0 + (\alpha_1 + \alpha_2 + 1) x_1 + (\alpha_2 + 1) x_2 \label{general S} 
\end{gather*}
on $V$, and its base locus is $V (\alpha_1, \alpha_2)$. 
We blow up $V$ as follows: 
\begin{equation*} 
V \overset{p _1}{\longleftarrow } 
B_{W_1} (V)  \overset{p _2}{\longleftarrow } 
B_{W_2} B_{W_1} (V) \overset{p _3}{\longleftarrow } 
\hat{V} := B_{W_3} B_{W_2} B_{W_1} (V).
\end{equation*}
We denote by $p$ the composition of three blowing-ups $p_1$, $p_2$ and $p_3$. 
The centers $W_1$, $W_2$ and $W_3$ are defined as follows: 
\begin{itemize}
\item $W_1$ is the base locus of $\tilde{\psi}$ on $V$, that is, the line $V (\alpha_1, \alpha_2)$. 
\item $W_2$ is a line $L'$ in the exceptional set of $p_1$. 
Here $L'$ is defined as follows: 
$B_{W_1} (V)$ has an affine open covering consisting of $2$ affine open subsets defined by 
\begin{align*}
{\rm Spec} 
\left( k \left[ 
u, \alpha _1, \frac{\alpha _2}{\alpha _1}, \frac{1}{u^3 - 1} 
\right] \right) \setminus {\mathcal C}', \ 
{\rm Spec} \left( k \left[ 
u, \frac{\alpha_1}{\alpha_2}, \alpha_2, \frac{1}{u^3 - 1} 
\right] \right)  \setminus {\mathcal C}', 
\end{align*}
where ${\mathcal C}'$ is the proper transform of ${\mathcal C}^{(0,0,0)}$ 
under $p_1$. 
Then we define $L'$ as the $u$-axis in the second affine subset, that is, 
$L' = V \left( \frac{\alpha_1}{\alpha_2}, \alpha_2 \right)$. 
\item $W_3$ is a line $L''$ in the exceptional set of $p_2$. 
Here $L''$ is defined as follows: 
$B_{W_2} B_{W_1} (V)$ has an affine open covering consisting of $3$ affine open subsets defined by 
\begin{gather*}
{\rm Spec} \left( k \left[ 
u, \alpha_1, \frac{\alpha_2}{\alpha_1}, \frac{1}{u^3 - 1} 
\right] \right) \setminus {\mathcal C}'', \ 
{\rm Spec} \left( k \left[ 
u, \frac{\alpha_1}{\alpha_2}, \frac{\alpha_2^2}{\alpha_1}, \frac{1}{u^3 - 1} 
\right] \right) \setminus {\mathcal C}'', \\
{\rm Spec} \left( k \left[ 
u, \frac{\alpha_1}{\alpha_2^2}, \alpha_2, \frac{1}{u^3 - 1} 
\right] \right)  \setminus {\mathcal C}'', 
\end{gather*}
where ${\mathcal C}''$ is the proper transform of 
${\mathcal C}'$ under $p_2$. 
Then we define $L''$ as the $u$-axis in the second affine subset, that is, 
$L'' = V \left( \frac{\alpha_1}{\alpha_2} , \frac{\alpha_2^2}{\alpha_1} \right)$. 
\end{itemize}
Let $\hat{{\mathcal C}}$ be the proper transform of 
${\mathcal C}''$ under $p_3$. 
Then $\hat{V} = B_{W_3} B_{W_2} B_{W_1} (V)$ has an affine open covering consisting of $4$ 
affine open subsets defined by 
\begin{align*}
V_1 
&=
{\rm Spec} \left( k \left[ u, \alpha_1, \frac{\alpha_2}{\alpha_1}, \frac{1}{u^3 - 1} \right] \right) \setminus \hat{{\mathcal C}}, 
V_2 
=
{\rm Spec} \left( k \left[ 
u, \frac{\alpha_1}{\alpha_2}, \frac{\alpha_2^3}{\alpha_1^2}, \frac{1}{u^3 - 1} \right] \right) 
\setminus \hat{{\mathcal C}}, 
\\
%\end{align*}
%\begin{align*}
V_3 
&=
{\rm Spec} \left( k \left[ 
u, \frac{\alpha_1^2}{\alpha_2^3}, \frac{\alpha_2^2}{\alpha_1}, \frac{1}{u^3 - 1} \right] \right)  \setminus \hat{{\mathcal C}},  
V_4 
=
 {\rm Spec} \left( k \left[ 
u, \frac{\alpha_1}{\alpha_2^2}, \alpha_2, \frac{1}{u^3 - 1} \right] \right)  
\setminus \hat{{\mathcal C}} . 
\end{align*}
In what follows, we denote by $w^{(r)}_0$, $w^{(r)}_1$, $w^{(r)}_2$ the above coordinates of $V_r$. 

{\bf Step 3.} 
We define explicitly a morphism 
$\hat{\psi}$ from $\hat{V}$ to $\overline{P}_{1, 3}$ such that $\hat{\psi}$ is an extension of $\tilde{\psi}$. 
For it, we define morphisms $\hat{\psi}_r$ from $V_r$ to $\overline{P}_{1,3}$, respectively, and we glue these morphisms. 
For each $r \in \{ 1, \dots, 4\}$, let 
\begin{gather*}
F^{(r)} (x) = \left( w^{(r)}_0 \right)^3 x_0^3 + x_1^3 + x_2^3  - 3 x_0 x_1 x_2, \\
S^{(r)} (x) = x_0 + \left( \alpha^{(r)}_1 + \alpha^{(r)}_2 + 1 \right) x_1 
+ \left( \alpha^{(r)}_2 + 1 \right) x_2, \ \ \mbox{where}, \\
%\end{gather*}
%\begin{align*}
\alpha^{(r)}_1 = \left\{ 
\begin{array}{cc}
w^{(1)}_1 & (r = 1), \\
\left( w^{(2)}_1 \right)^{3} w_2^{(2)} & (r=2),\\
\left( w^{(3)}_1 \right)^{2} \left( w^{(3)}_2 \right)^{3} & (r=3), \\
w^{(4)}_1 \left( w^{(2)}_2 \right)^{2} & (r = 4), 
\end{array}
\right. 
\alpha^{(r)}_2 = \left\{ 
\begin{array}{cc}
w^{(1)}_1 w^{(1)}_2 & (r = 1), \\
\left( w^{(2)}_1 \right)^{2} w_2^{(2)} & (r=2),\\
w^{(3)}_1 \left( w^{(3)}_2 \right)^{2} & (r=3), \\
w^{(4)}_2 & (r = 4). 
\end{array}
\right. 
\end{gather*}
Then we define $\hat{\psi}_r$ formally as follows: 
\begin{align*}
\hat{\psi}_r \left( w^{(r)}_0, w^{(r)}_1, w^{(r)}_2 \right) 
&:= N_r \cdot \left( V \left( F^{(r)} (x) \right), V \left( S^{(r)} (x) \right) \right) \\
&= \left( V \left( F^{(r)} (N_r ^{-1} \cdot x) \right), V \left( S^{(r)} (N_r ^{-1} \cdot x) \right) \right), 
\end{align*}
where each $N _{r}$ is defined by 
\begin{align*}
N _{r} 
= 
\left\{
\begin{array}{l l} 
\begin{bmatrix}
1 & 1 & 1 \\
0 & \alpha^{(r)}_1 & 0 \\
0 & \left( \alpha^{(r)}_1 \right)^{\frac{2}{3}} & \left( \alpha^{(r)}_1 \right)^{\frac{2}{3}} \\
\end{bmatrix}  \ (r = 1, 2), \\
 & \\
\begin{bmatrix}
1 & 1 & 1 \\
0 & \left( \alpha^{(r)}_2 \right)^{\frac{3}{2}} & 0 \\
0 & \alpha^{(r)}_2 & \alpha^{(r)}_2 \\
\end{bmatrix}  \ (r = 3, 4). 
\end{array} \right. 
\end{align*}
By simple calculations, we can see them specifically as follows: 
\begin{Prop} \label{psi} 
Let $\hat{\psi}_r \left(w^{(r)}_0, w^{(r)}_1, w^{(r)}_2\right) 
= \left(C^{(r)}, L^{(r)} \right)$ ($1 \leq r \leq 4$). 
Then we have that 
\begin{align*}
C^{(1)} &: \left( \left( w_0^{(1)} \right) ^3 - 1 \right) x_{2}^3 = 3 x_0 x_{1}^2 - 3 \left( w^{(1)}_1 \right) ^{\frac{1}{3}} x_0 x_1 x_2  \\
&\hspace{35pt}
+ \left( w^{(1)}_0 \right) ^3 x_{0} \left( \left( w^{(1)}_1 \right) ^2 x_{0}^2 
- 3 \left( w^{(1)}_1 \right) ^{\frac{4}{3}} x_{0} x_{2} \right. 
\left. + 3 \left( w^{(1)}_1 \right) ^{\frac{2}{3}} x_{2}^2 \right) , \\
L^{(1)} &: x_0 + x_1 + \left( w^{(1)}_1 \right) ^{\frac{1}{3}} w^{(1)}_2 x_{2} = 0 , 
\\
%\end{align*}
%\begin{align*}
C^{(2)} &: \left( \left( w^{(2)}_0 \right) ^3 - 1 \right) x_{2}^3 = 3 x_0 x_{1}^2 - 3 w^{(2)}_1 \left( w^{(2)}_2 \right) ^{\frac{1}{3}} x_0 x_1 x_2 + \left( w^{(2)}_0 \right)^3 \left( w^{(2)}_1 \right)^2 x_{0} \\
& \hspace{35pt} \cdot 
\left( \left( w^{(2)}_1 \right)^4 \left( w^{(2)}_2 \right)^2 x_0^2 \right. 
\left. - 3 \left( w^{(2)}_1 \right)^2 \left( w^{(2)}_2 \right)^{\frac{4}{3}} x_{0} x_2 + 3 \left( w^{(2)}_2 \right)^{\frac{2}{3}} x_{2}^2 \right) , \\
L^{(2)} &: x_0 + x_{1} + \left( w^{(2)}_2 \right)^{\frac{1}{3}} x_{2} = 0 ,
%\\
\end{align*}
\begin{align*}
C^{(3)} &: \left( \left( w^{(3)}_0 \right)^3 - 1 \right) x_{2}^3 
= 3 x_0 x_{1}^2 
- 3 \left( w^{(3)}_1 \right)^{\frac{1}{2}} w^{(3)}_2 x_0 x_1 x_2 
+ \left( w^{(3)}_0 \right)^3 w^{(3)}_1  \\
&\hspace{40pt} 
\cdot \left( w^{(3)}_2 \right)^2 x_0 
\left( 
\left( w^{(3)}_1 \right) ^2 \left( w^{(3)}_2 \right) ^4 x_0^2 
- 3 w^{(3)}_1 \left( w^{(3)}_2 \right) ^{2} x_{0} x_2 + 3 x_{2}^2 
\right) , \\
L^{(3)} &: x_0 + \left( w^{(3)}_1 \right) ^{\frac{1}{2}} x_{1} + x_{2} = 0 ,
\\
% \end{align*}
% \begin{align*}
C^{(4)} &: \left( \left( w^{(4)}_0 \right)^3 - 1 \right) x_{2}^3 
= 3 x_0 x_{1}^2 
- 3 \left( w^{(4)}_2 \right)^{\frac{1}{2}} x_0 x_1 x_2 \\
&\hspace{40pt} 
+ \left( 
w^{(4)}_0 \right)^3 w^{(4)}_2 x_0 \left( \left( w^{(4)}_2 \right)^2 x_0^2 
- 3  w^{(4)}_2 x_{0} x_2 + 3 x_{2}^2 
\right) , 
\\
% \end{align*}
% \begin{align*}
L^{(4)} &: x_0 + w^{(4)}_1 \left( w^{(4)}_2 \right) ^{\frac{1}{2}} x_{1} + x_{2} = 0 .
\end{align*}
\end{Prop}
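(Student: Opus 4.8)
The statement is purely a matter of performing the substitution that defines $\hat{\psi}_r$ and simplifying, so the plan is to do exactly that, organising the bookkeeping by the observation that the four matrices $N_r$ all have the common shape
\[
N = \begin{bmatrix} 1 & 1 & 1 \\ 0 & d & 0 \\ 0 & d^{2/3} & d^{2/3} \end{bmatrix},
\qquad
d = \begin{cases} \alpha^{(r)}_1 & (r = 1, 2), \\ \left( \alpha^{(r)}_2 \right)^{3/2} & (r = 3, 4). \end{cases}
\]
Thus I would first compute once, or verify by checking $N N^{-1} = I$, that
\[
N^{-1} \cdot x = \left( x_0 - d^{-2/3} x_2, \; d^{-1} x_1, \; -d^{-1} x_1 + d^{-2/3} x_2 \right),
\]
and then specialise $d$ and the $\alpha^{(r)}_i$, $w^{(r)}_0$ according to Step 3 at the end.

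Substituting $N^{-1} \cdot x$ into $F^{(r)}(x) = \left( w^{(r)}_0 \right)^3 x_0^3 + x_1^3 + x_2^3 - 3 x_0 x_1 x_2$ and writing $b = w^{(r)}_0$, the $x_1^3$-contributions of $y_1^3$ and $y_2^3$ cancel, and the remaining mixed monomials $x_1^2 x_2$, $x_1 x_2^2$ produced by $y_1^3 + y_2^3$ are exactly cancelled by those produced by $-3 y_0 y_1 y_2$; what is left is $d^{-2}$ times
\[
d^2 b^3 x_0^3 - 3 d^{4/3} b^3 x_0^2 x_2 + 3 d^{2/3} b^3 x_0 x_2^2 + (1 - b^3) x_2^3 + 3 x_0 x_1^2 - 3 d^{1/3} x_0 x_1 x_2,
\]
and since $V(F) = V(\lambda F)$ for $\lambda \in k^\times$ we may clear the factor $d^{-2}$; isolating the $x_2^3$-term gives $C^{(r)}$. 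Likewise, substituting $N^{-1}\cdot x$ into $S^{(r)}(x) = x_0 + \left( \alpha^{(r)}_1 + \alpha^{(r)}_2 + 1 \right) x_1 + \left( \alpha^{(r)}_2 + 1 \right) x_2$ collapses the $x_1$- and $x_2$-coefficients to $\alpha^{(r)}_1 d^{-1}$ and $\alpha^{(r)}_2 d^{-2/3}$, so $L^{(r)} : x_0 + \alpha^{(r)}_1 d^{-1} x_1 + \alpha^{(r)}_2 d^{-2/3} x_2 = 0$.

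It then remains to substitute the values of $d$, $\alpha^{(r)}_1$, $\alpha^{(r)}_2$ from Step 3 for each $r \in \{1, 2, 3, 4\}$; for example for $r = 1$ one has $d = \alpha^{(1)}_1 = w^{(1)}_1$ and $\alpha^{(1)}_2 = w^{(1)}_1 w^{(1)}_2$, so $\alpha^{(1)}_1 d^{-1} = 1$ and $\alpha^{(1)}_2 d^{-2/3} = \left( w^{(1)}_1 \right)^{1/3} w^{(1)}_2$, recovering the stated $C^{(1)}$ and $L^{(1)}$, and the cases $r = 2, 3, 4$ are identical in structure. There is no conceptual obstacle; the only points that need care are the cancellation of the $x_1^3$ and the mixed $x_1 x_2$ monomials, the clearing of the scalar $d^{-2}$, and checking that the fractional powers $d^{1/3}, d^{2/3}$ (and $d^{3/2}$ hidden in $N_r$ for $r = 3, 4$) simplify to the integer-, half- and third-powers of the coordinates $w^{(r)}_i$ displayed in the Proposition — consistently with the remark following the statement, these remaining roots enter $C^{(r)}$ and $L^{(r)}$ only through combinations unchanged by $x_1 \mapsto -x_1$.
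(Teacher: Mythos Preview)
Your proposal is correct and follows the same route as the paper, which simply says ``By simple calculations'' and gives no details; your unified treatment via the parameter $d$ is a clean way to organise those calculations and I verified the key identities $N^{-1}x = (x_0 - d^{-2/3}x_2,\, d^{-1}x_1,\, -d^{-1}x_1 + d^{-2/3}x_2)$ and the resulting expression for $d^2 F^{(r)}(N^{-1}x)$. One small inaccuracy in your closing parenthetical: the symmetry $x_1 \mapsto -x_1$ handles only the square-root ambiguity for $r = 3, 4$; for $r = 1, 2$ the cube-root ambiguity is absorbed instead by $x_2 \mapsto \beta x_2$ with $\beta^3 = 1$, as the remark following the statement notes.
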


Note that $\left(C^{(1)}, L^{(1)}\right)$ and $\left(C^{(3)}, L^{(3)}\right)$ 
(resp. $\left(C^{(2)}, L^{(2)}\right)$ and $\left(C^{(4)}, L^{(4)}\right)$) 
are independent of choices of a square root and a cubic root of 
$w^{(r)}_1$ (resp. $w^{(r)}_2$). 
In fact, when $r=1$ or $2$ (resp. $r=3$ or $4$), 
we have same semistable pairs in $\overline{P}_{1, 3}$ under the transformation $x_2 \mapsto \beta x_2$, $\beta^3=1$ (resp. $x_1 \mapsto - x_1$). 

We can show that each pair $\left(C^{(r)}, L^{(r)}\right)$
is samistable. 
Here we only consider in the case that $r=3$.  
The other cases can be checked by same arguments. 
If $w^{(3)}_1 w^{(3)}_2 = 0$, then we have 
$$
C^{(3)} : \left( \left( w^{(3)}_0 \right)^3 - 1 \right) x_2^3 = 3 x_0 x_1^2 .
$$
By definition, we have $\left( w^{(3)}_0 \right)^3 \not = 1$, hence $C^{(3)}$ is a cuspidal curve with the cusp point $p_0 = (1 : 0 : 0)$. 
Then $L^{(3)}$ does not pass through $p_0$ and $L$ is not $3$-tangent to $C^{(3)}$. 
Hence $\left( C^{(3)}, L^{(3)} \right)$ is a semistable pair. 
Suppose $w^{(3)}_1 w^{(3)}_2 \not = 0$. Then by the transformation 
$$
x_0 = \left( y_0 + y_1 + y_2 \right) / \left( w^{(3)}_1 \left( w^{(3)}_2 \right)^2 \right), \ 
x_1 = \left( w^{(3)}_1 \right)^{1/2} w^{(3)}_2 y_1, \ 
x_2 = y_2 + y_1,
$$
we obtain $C^{(3)} :  \left( w^{(3)}_0 \right) ^3 y_0^3 + y_1^3 + y_2^3 = 3 y_0 y_1 y_2$ and 
\begin{equation*}
L^{(3)} : y_0 + \left( w^{(3)}_1 \left( w^{(3)}_2 \right)^2 \left( w^{(3)}_1 w^{(3)}_2 + 1 \right) + 1 \right) y_1 
+ \left( w^{(3)}_1 \left( w^{(3)}_2 \right)^2 + 1 \right) y_2 = 0 .
\end{equation*}
If $w_0^{(3)} = 0$, then $C^{(3)}$ is a nodal curve with the nodal point $p_0$, 
and $L^{(3)}$ does not pass through this point. 
Hence $\left( C^{(3)}, L^{(3)} \right)$ is a semistable pair. 
If $w_0^{(3)} \not = 0$, then $C^{(3)}$ is a smooth curve, 
since $\left( w^{(3)}_0 \right) ^3 \not = 1$. 
Recall that $L^{(3)}$ is a $3$-tangent to $C^{(3)}$ if and only if 
$\left(w^{(3)}_0, w^{(3)}_1, w^{(3)}_2\right) \in \hat{{\mathcal C}}$. 
By construction, we have $V_3 \cap \hat{{\mathcal C}} = \emptyset$. 
Hence $L^{(3)}$ is not a $3$-tangent to $C^{(3)}$. 
Thus $\left( C^{(3)}, L^{(3)} \right)$ is a semistable pair. 

By similar arguments in the proof of Proposition \ref{phi on U}, 
we can prove the following Proposition: 
\begin{Prop}
For any $r$, $r' \in \{ 1, \dots, 4\}$, we have that 
$\hat{\psi}_r = \hat{\psi}_{r'}$ on $V_r \cap V_{r'}$, and 
$\hat{\psi}_r = \tilde{\psi} \circ p$ on $V_r \setminus \hat{E}$, 
where $\hat{E}$ is the exceptional set of $p$. 
In particular, there exists a morphism $\hat{\psi} : \hat{V} \to \overline{P}_{1,3}$ such that 
$\tilde{\psi} \circ p = \hat{\psi}$ on $\hat{V} \setminus \hat{E}$. 
\end{Prop}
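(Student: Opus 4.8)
The plan is to follow verbatim the strategy used for Proposition~\ref{phi on U}, with the charts $U_j$ of $\tilde U$, the morphisms $\tilde{\varphi}_j$, the matrices $M_j$ and the map $\varphi\circ\pi$ replaced respectively by the charts $V_r$ of $\hat V$, the morphisms $\hat{\psi}_r$, the matrices $N_r$ and the map $\tilde{\psi}\circ p$.

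First I would establish that $\hat{\psi}_r=\tilde{\psi}\circ p$ on $V_r\setminus\hat E$. By construction $\hat{\psi}_r=N_r\cdot\bigl(V(F^{(r)}(x)),V(S^{(r)}(x))\bigr)$, and from the explicit shapes of the $N_r$ one checks, exactly as for the matrices $M_j$ in Proposition~\ref{phi on U}, that $N_r$ has nonzero determinant on $V_r\setminus\hat E$; here one also records, as in the remark following Proposition~\ref{psi}, that the fractional powers appearing in $N_r$ and in $F^{(r)},S^{(r)}$ change the answer only by the substitutions $x_2\mapsto\beta x_2$ with $\beta^3=1$ (resp.\ $x_1\mapsto-x_1$), so that $\hat{\psi}_r$ is well defined. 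Hence on $V_r\setminus\hat E$ the matrix $N_r$ lies in $\mathrm{PGL}(3)$, so that $N_r^{-1}\cdot\hat{\psi}_r=\bigl(V(F^{(r)}),V(S^{(r)})\bigr)$ in $\overline P_{1,3}$; comparing this with the formula $\tilde{\psi}(u,\alpha_1,\alpha_2)=\bigl(V(\tilde F(x)),V(\tilde S(x))\bigr)$ of Step~2 together with $p\bigl(w^{(r)}_0,w^{(r)}_1,w^{(r)}_2\bigr)=\bigl(u,\alpha^{(r)}_1,\alpha^{(r)}_2\bigr)$ yields $\hat{\psi}_r=\tilde{\psi}\circ p$ there. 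In particular $\hat{\psi}_r=\hat{\psi}_{r'}$ on $(V_r\cap V_{r'})\setminus\hat E$.

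It remains to prove $\hat{\psi}_r=\hat{\psi}_{r'}$ on $V_r\cap V_{r'}\cap\hat E$, where $\tilde{\psi}\circ p$ is no longer available; this is where the actual computation lies. I would proceed exactly as in the $(j,j')=(1,2)$ case of Proposition~\ref{phi on U}: for each pair $(r,r')$, describe $V_r\cap V_{r'}\cap\hat E$ via the transition maps between the four charts of $\hat V$ recorded in Step~2, substitute into the explicit formulas of Proposition~\ref{psi}, and exhibit a diagonal matrix $\mathrm{Diag}(1,\ast,\ast)$ --- built from suitable powers of the coordinates, just as $\mathrm{Diag}\bigl(1,(v^{(1)}_2)^{-1/2},(v^{(1)}_2)^{-1}\bigr)$ occurred before --- conjugating $\hat{\psi}_r$ into $\hat{\psi}_{r'}$; hence the two define the same point of $\overline P_{1,3}$ on that intersection. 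Since there are only finitely many pairs $(r,r')$, this completes the agreement on all of $V_r\cap V_{r'}$.

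Finally, the $\hat{\psi}_r$ agree on all overlaps, so they glue to a single morphism $\hat{\psi}:\hat V\to\overline P_{1,3}$, and the local identities $\hat{\psi}_r=\tilde{\psi}\circ p$ on $V_r\setminus\hat E$ globalize to $\tilde{\psi}\circ p=\hat{\psi}$ on $\hat V\setminus\hat E$. The main obstacle is purely computational: the case-by-case verification on the exceptional locus that the semistable pairs produced in adjacent charts remain $\mathrm{PGL}(3)$-equivalent even where the underlying cubics degenerate (to nodal, cuspidal, or conic-plus-line curves); everything else is formal bookkeeping.
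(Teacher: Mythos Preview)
Your proposal is correct and follows exactly the approach the paper itself indicates: the paper does not give an independent proof but simply states that the result follows ``by similar arguments in the proof of Proposition~\ref{phi on U}'', and your sketch spells out precisely those analogous steps. The only mild over-specification is your expectation that the conjugating element on $V_r\cap V_{r'}\cap\hat E$ will be diagonal; since the $N_r$ here are upper-triangular rather than diagonal, the transition element may need to be of a slightly more general shape, but this is exactly the kind of routine computational detail you already flag at the end.
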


{\bf Step 4.} 
We construct morphisms 
$p : \hat{X} \to \tilde{X}$ and 
$\hat{\psi} : \hat{X} \to \overline{P}_{1,3}$ such that 
$p$ is a composition of blowing-ups and $\hat{\psi}$ is an extension of $\tilde{\psi}$. 
Similarly to Steps 2-3, for each affine open subset $V^{(a, i, j)}$ of $\tilde{X}$ 
($a=0$, $\infty$ or $a^3=1$, and $0 \leq i, j \leq 2$), 
we can construct explicitly a morphism 
$p^{(a, i, j)} : \hat{V}^{(a, i, j)} \to V^{(a, i, j)}$ which is the composition of blowing-ups 
$p^{(a, i, j)}_1$, $p^{(a, i, j)}_2$ and $p^{(a, i, j)}_3$, 
and a morphism $\hat{\psi}^{(a, i, j)} : \hat{V}^{(a, i, j)} \to \overline{P}_{1,3}$ such that 
$\tilde{\psi} \circ p^{(a, i, j)} = \hat{\psi}^{(a,i, j)}$ on 
$\hat{V}^{(a, i, j)} \setminus \hat{E}^{(a, i, j)}$, 
where $\hat{E}^{(a, i, j)}$ is the exceptional set of $p^{(a, i, j)}$. 
We denote by $p : \hat{X} \to \tilde{X}$ the scheme obtained by gluing the schemes 
$p^{(a, i, j)} : \hat{V}^{(a, i, j)} \to V^{(a, i, j)}$ and $\bigcup V^{(a, i)}_\ell$, that is, 
$p : \hat{X} \to \tilde{X}$ is the composition of blowing-ups 
\begin{align*} \label{blowing-up2}
\tilde{X} \overset{p_1}{\longleftarrow}
B_{W_1} \tilde{X} \overset{p_2}{\longleftarrow}
B_{W_2} B_{W_1} (\tilde{X}) \overset{p_3}{\longleftarrow}
B_{W_3} B_{W_2} B_{W_1} (\tilde{X}) = \hat{X}, 
\end{align*}
where for each $\ell$, 
$W_\ell$ is the disjoint union of $9$ lines obtained by gluing centers of blow-ups 
$p_\ell^{(a, i, j)}$ 
($a=0$, $\infty$ or $a^3=1$, and $0 \leq i, j \leq 2$). 
Note that $W_1$ is $\tilde{A}$ defined by (\ref{inverse image of WT}). 
By simple calculations, we can check that $\hat{\psi} ^{(a, i, j)} = \hat{\psi}^{(a', i', j')}$ 
on $\hat{V}^{(a, i, j)} \cap \hat{V}^{(a', i', j')}$ for any $(a, i, j)$ and $(a', i', j')$. 
Recall that $\tilde{\psi}$ is generically finite of degree $216$ and $p$ is of degree $1$. 
Therefore we obtain 

\begin{Th} \label{main_2}
There exists a morphism $\hat{\psi} : \hat{X} \rightarrow \overline{P}_{1,3}$ such that 
$\tilde{\psi} \circ p = \hat{\psi}$ on $\hat{X} \setminus \hat{E}$, where 
$\hat{E}$ is the the exceptional set of $p : \hat{X} \rightarrow \tilde{X}$. 
In particular, $\hat{\psi}$ is generically finite of degree $216$. 
\end{Th}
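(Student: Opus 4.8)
The plan is to assemble $\hat{\psi}$ from the local data constructed in Steps 1--3, in exact parallel with the construction of $\tilde{\varphi}$ in Subsection \ref{to BP} and the proof of Theorem \ref{main_1}. What is already in hand is: for every chart $V^{(a,i,j)}$ of $\tilde{X}$ the threefold blow-up $p^{(a,i,j)}:\hat{V}^{(a,i,j)}\to V^{(a,i,j)}$ together with a morphism $\hat{\psi}^{(a,i,j)}:\hat{V}^{(a,i,j)}\to\overline{P}_{1,3}$ agreeing with $\tilde{\psi}\circ p^{(a,i,j)}$ away from the exceptional set $\hat{E}^{(a,i,j)}$; and on the remaining charts $V^{(a,i)}_\ell$ the rational map $\tilde{\psi}$ is already a morphism. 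So it suffices to verify that all these local morphisms agree on overlaps: then, since they map to the fixed scheme $\overline{P}_{1,3}$, they glue to a global morphism $\hat{\psi}:\hat{X}\to\overline{P}_{1,3}$, and by construction $\hat{\psi}=\tilde{\psi}\circ p$ on $\hat{X}\setminus\hat{E}$.

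The compatibility check splits into two cases. Away from the exceptional loci, $\hat{\psi}^{(a,i,j)}$ and $\hat{\psi}^{(a',i',j')}$ both coincide with $\tilde{\psi}\circ p$, which is a genuine morphism on $\tilde{X}\setminus\tilde{A}$ (recall $\tilde{A}$ is the base locus of $\tilde{\psi}$ and $p_1$ blows up precisely $W_1=\tilde{A}$), so they agree there. On the exceptional divisors one argues as in the proof of Proposition \ref{phi on U}: using the explicit formulas of Proposition \ref{psi} for $\left(C^{(r)},L^{(r)}\right)$ on each $V_r$, together with the transition maps among the $V_r$ and among the distinct blow-up patches $\hat{V}^{(a,i,j)}$, one exhibits on each overlap a diagonal element of ${\rm PGL}(3)$ conjugating one local pair into the other, whence the two sections of $\overline{P}_{1,3}$ coincide. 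Here one uses that the formulas are independent of the chosen square and cube roots of the $w^{(r)}_\ell$ (the remark following Proposition \ref{psi}) and that the loci ${\mathcal C}^{(a,i,j)}$ have been removed so that $\tilde{\psi}$ lands in the semistable locus $({\mathbb P}(S^3V)\times{\mathbb P}(V))^{ss}$.

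I expect the main obstacle to be exactly this last bookkeeping step: matching, over every pair of the many charts, the matrices $N_r$, the extracted roots, and the combinatorial indices via the correspondence $(a,i,j)\leftrightarrow(i_a,j_a)$ of Step 1. It is purely computational and follows the template of Proposition \ref{phi on U}, but it is lengthy. Finally, the degree statement is immediate: $p$ is a composition of blow-ups with nonsingular centers, hence birational, i.e.\ of degree $1$; and $\tilde{\psi}$ was shown, just before Step 1, to be generically finite of degree $216$ (since $\psi\circ\pi=\tilde{\psi}$ on a dense open and $\psi$ has degree $216$ by Lemma \ref{natural maps}, (v)). Therefore $\hat{\psi}=\tilde{\psi}\circ p$ on the dense open $\hat{X}\setminus\hat{E}$, and $\hat{\psi}$ is generically finite of degree $216$.
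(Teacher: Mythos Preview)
Your proposal is correct and follows essentially the same approach as the paper: construct the local morphisms $\hat{\psi}^{(a,i,j)}$ on each chart via Steps 1--3, verify by direct calculation (in the manner of Proposition \ref{phi on U}) that they agree on overlaps, glue to obtain $\hat{\psi}$, and deduce the degree from the fact that $p$ is birational and $\tilde{\psi}$ is generically finite of degree $216$. If anything, your write-up is more explicit about the overlap-checking strategy than the paper, which simply asserts that the compatibilities hold ``by simple calculations.''
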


\begin{Rem}
By the morphism $\hat{\psi} : \hat{X} \rightarrow \overline{P}_{1, 3}$, we see a rough structure of $\overline{P}_{1, 3}$ in the following sense: 
By construction, each $\hat{V}^{(a, i, j)}$ has an affine open covering $\hat{V}^{(a, i, j)} 
= \bigcup_{r=1}^4 V^{(a, i, j)}_r$. 
Recall that $p^{(a, i, j)} : \hat{V}^{(a, i, j)} \to V^{(a, i, j)}$ is a composition of blowing-ups 
$p^{(a, i, j)}_1$, $p^{(a, i, j)}_2$ and $p^{(a, i, j)}_3$. 
Let $E^{(a, i, j)}_\ell$ be the exceptional set of $p^{(a, i, j)}_\ell$ and 
$\hat{E}^{(a, i, j)}_\ell$ be the proper transform of $E^{(a, i, j)}_\ell$ under $p^{(a, i, j)}$. 
For example, when $(a, i, j) = (0, 0, 0)$, 
\begin{gather*} 
\hat{E}^{(0,0,0)}_1 = V \left( w^{(1)}_1 \right) \cup V \left( w^{(2)}_2 \right), \ \ 
\hat{E}^{(0,0,0)}_2 = V \left( w^{(3)}_1 \right) \cup V \left( w^{(4)}_2 \right), \\
\hat{E}^{(0,0,0)}_3 = V \left( w^{(2)}_1 \right) \cup V \left( w^{(3)}_2 \right), 
\end{gather*}
where $\left\{ w^{(r)}_0, w^{(r)}_1, w^{(r)}_2 \right\}$ is the coordinate of $V_r$ defined in Step 2. 
Let $\hat{E}_\ell = \bigcup \tilde{E}^{(a, i, j)}_{\ell}$ ($1 \leq \ell \leq 3$). 
For any $w \in \hat{X}$, we put $\hat{\psi} (w) = (C, L) \in \overline{P}_{1, 3}$. 
Then by simple calculations, 
\begin{align} \label{inverse image of Cusps}
\mbox{
$C$ is a cuspidal cubic if and only if $w \in \hat{E} = \bigcup_{\ell=1}^3 \hat{E}_\ell$. 
}
\end{align}
In fact, this is clear on $\hat{V} = \hat{V}^{(0,0,0)}$ by Proposition \ref{psi}. 
On the other hand, 
when $w \in \hat{X} \setminus \hat{E}$, 
we have same results as Remark \ref{structure of BP}, since $\hat{X} \setminus \hat{E} \simeq \tilde{X} \setminus \tilde{A}$.   
\end{Rem}

\subsection{Proof of Proposition \ref{main2}}

The first assertion (i) is clear from Theorems \ref{main_1} and \ref{main_2}. 
The second assertion (ii) is clear from definitions (see Section \ref{comparison}). 
We prove the third assertion (iii). 
By (\ref{inverse image of Cusps}), we obtain that $\hat{\psi}^{-1} (W_{\rm Cusp}) = \hat{E}$, 
where $W_{\rm Cusp}$ is the subset of pairs $(C, L) \in \overline{P}_{1,3}$ consisting of $C$ is a cuspidal cubic.  
On the other hand, we have (\ref{inverse image of WT}), and hence 
we obtain (iii). 
\section*{Acknowledgements}

This paper is based on the author's doctoral thesis. 
The author would like to express his appreciation to Professor Iku Nakamura for suggesting this topic and for valuable advices and encouragement during the preparation of this paper. 
He has kindly shown his many manuscripts to the author. 
In particular, Section 3 is mainly due to his contributions. 
Of course, all of this paper is responsible to the author.

\smallskip
Masamichi Kuroda \\
Department of Mathematics \\
Hokkaido University \\
Sapporo 060-0810 \\
Japan 

\smallskip
\hspace{-0.5cm}m-kuroda@math.sci.hokudai.ac.jp


\begin{thebibliography}{99}

\bibitem[Al02]{Alexeev02}V.~Alexeev, 
{\it Complete moduli in the presence of semiabelian group action}, 
Ann. of Math.  {\bf 155} (2002), 611--708. 

\bibitem[Do03]{Dolgachev03} I. V. Dolgachev, 
{\it Lectures on invariant theory}, 
volume 296 of London Mathematical Society Lecture Note Series. 
Cambridge University Press, 
Cambridge, 
(2003). 

 \bibitem[Do12]{Dolgachev12} I. V. Dolgachev, 
 Classical algebraic geometry, 
 Cambridge University Press, 
 New York, 
 (2012). 

\bibitem[Gr60-67]{EGA}
A.~Grothendieck, 
{\it El\'{e}ments de g\'{e}om\'{e}trie alg\'{e}brique}, 
I--IV. publ. Math. I.H.E.S., {\bf4}, {\bf8}, {\bf11}, {\bf17}, {\bf20}, {\bf24}, {\bf28}, {\bf32} 
(1960--67)

\bibitem[Ke-Mo97]{KeelMori97}
S.~Keel and S.~Mori, 
{\it 
Quotients by Groupoids}, 
Ann. of Math.  {\bf 145} (1997), 193--213. 

\bibitem[Li68]{Lichtenbaum68}
S.~Lichtenbaum, 
{\it 
Curves over discrete valuation rings}, 
Amer. J. Math., 
{\bf 25}, 
no. 2 (1968), 380--405

\bibitem[Mu-Fo-Ki94]{MFK94}
D.~Mumford, J.~Fogarty and F.~Kirwan, 
{\it Geometric invariant theory}, 
Ergebnisse der Mathematik und ihrer Grenzgebiete, vol.~34, 
Springer-Verlag,
Berlin Heidelberg New York, 
(1994).

\bibitem[Na99]{Nakamura99} I.~Nakamura, 
{\it Stability of degenerate abelian varieties}, 
Invent. Math. 
{\bf 136} (1999), 659--715.

\bibitem[Ne78]{Newstead78} P. E. Newstead, 
{\it Lectures on Introduction to Moduli Problems and Orbit Spaces}, 
Tata Inst. Lecture Notes, 
Springer-Verlag (1978).  

\bibitem[Sh66]{Shafarevitch66}
I.~Shafarevitch, 
{\it Lectures on Minimal Models and Birational Transformations of Two-dimensional Schemes}, 
Tate Institute: Bombay (1966)

\end{thebibliography}
\end{document}